\documentclass[11pt]{article}
\usepackage[margin=1in]{geometry} 
\geometry{letterpaper}

\usepackage{amssymb,amsfonts,amsmath,bbm,mathrsfs,stmaryrd}
\usepackage{xcolor}
\usepackage{url}

\usepackage[colorlinks,
             linkcolor=black!75!red,
             citecolor=blue,
             pdftitle={Top_gen},
             pdfauthor={Alexandru Chirvasitu, Michael Brannan},
             pdfproducer={pdfLaTeX},
             pdfpagemode=None,
             bookmarksopen=true
             bookmarksnumbered=true]{hyperref}

\usepackage{tikz}
\usetikzlibrary{arrows,calc,decorations.pathreplacing,decorations.markings,intersections,shapes.geometric,through,fit,shapes.symbols,positioning,decorations.pathmorphing}

\usepackage{braket}

\usepackage[amsmath,thmmarks,hyperref]{ntheorem}
\usepackage{cleveref}

\crefname{section}{Section}{Sections}
\crefformat{section}{#2Section~#1#3} 
\Crefformat{section}{#2Section~#1#3} 

\crefname{subsection}{\S}{\S\S}
\crefformat{subsection}{#2\S#1#3} 
\Crefformat{subsection}{#2\S#1#3} 

\theoremstyle{plain}

\newtheorem{lemma}{Lemma}[section]
\newtheorem{proposition}[lemma]{Proposition}
\newtheorem{corollary}[lemma]{Corollary}
\newtheorem{theorem}[lemma]{Theorem}
\newtheorem{conjecture}[lemma]{Conjecture}

\theoremstyle{nonumberplain}
\newtheorem{theoremN}{Theorem}

\theoremstyle{plain}
\theorembodyfont{\upshape}
\theoremsymbol{\ensuremath{\blacklozenge}}

\newtheorem{definition}[lemma]{Definition}
\newtheorem{example}[lemma]{Example}
\newtheorem{remark}[lemma]{Remark}

\crefname{definition}{definition}{definitions}
\crefformat{definition}{#2definition~#1#3} 
\Crefformat{definition}{#2Definition~#1#3} 

\crefname{ex}{example}{examples}
\crefformat{example}{#2example~#1#3} 
\Crefformat{example}{#2Example~#1#3} 

\crefname{remark}{remark}{remarks}
\crefformat{remark}{#2remark~#1#3} 
\Crefformat{remark}{#2Remark~#1#3} 

\crefname{convention}{convention}{conventions}
\crefformat{convention}{#2convention~#1#3} 
\Crefformat{convention}{#2Convention~#1#3}

\crefname{lemma}{lemma}{lemmas}
\crefformat{lemma}{#2lemma~#1#3} 
\Crefformat{lemma}{#2Lemma~#1#3} 

\crefname{proposition}{proposition}{propositions}
\crefformat{proposition}{#2proposition~#1#3} 
\Crefformat{proposition}{#2Proposition~#1#3} 

\crefname{corollary}{corollary}{corollaries}
\crefformat{corollary}{#2corollary~#1#3} 
\Crefformat{corollary}{#2Corollary~#1#3} 

\crefname{theorem}{theorem}{theorems}
\crefformat{theorem}{#2theorem~#1#3} 
\Crefformat{theorem}{#2Theorem~#1#3} 

\crefname{assumption}{assumption}{Assumptions}
\crefformat{assumption}{#2assumption~#1#3} 
\Crefformat{assumption}{#2Assumption~#1#3} 

\crefname{equation}{}{}
\crefformat{equation}{(#2#1#3)} 
\Crefformat{equation}{(#2#1#3)}

\theoremstyle{nonumberplain}
\theoremsymbol{\ensuremath{\blacksquare}}

\newtheorem{proof}{Proof}
\newcommand\pf[1]{\newtheorem{#1}{Proof of \Cref{#1}}}


\newcommand\bC{\mathbb C}

\newcommand\bG{\mathbb G}
\newcommand\bH{\mathbb H}

\newcommand\bN{\mathbb N}

\newcommand\bS{\mathbb S}

\newcommand\bZ{\mathbb Z}

\newcommand\cA{\mathcal A}
\newcommand\cB{\mathcal B}
\newcommand\cC{\mathcal C}
\newcommand\cD{\mathcal D}

\newcommand\cL{\mathcal L}
\newcommand\cM{\mathcal M}


\DeclareMathOperator{\id}{id}


\newcommand{\qedhere}{\mbox{}\hfill\ensuremath{\blacksquare}}


\title{Topological generation and matrix models for quantum reflection groups }
\author{Michael Brannan, Alexandru Chirvasitu, Amaury Freslon}


\begin{document}

\date{}

\newcommand{\Addresses}{{
  \bigskip
  \footnotesize

  \textsc{Michael Brannan, Department of Mathematics, Texas A\&M University, College Station,
    TX 77843-3368, USA}\par\nopagebreak \textit{E-mail address}:
  \texttt{mbrannan@math.tamu.edu}
  
  \medskip
  
  \textsc{Alexandru Chirvasitu, Department of Mathematics, University at Buffalo, Buffalo,
    NY 14260-2900, USA}\par\nopagebreak \textit{E-mail address}:
  \texttt{achirvas@buffalo.edu}

  \medskip
  
  \textsc{Amaury Freslon, Laboratoire de Math\'ematiques d’Orsay, Univ.  Paris-Sud, CNRS, Universit\'e Paris-Saclay, 91405 Orsay, France}\par\nopagebreak
  \textit{E-mail address}: \texttt{amaury.freslon@math.u-psud.fr}
}}

\maketitle

\begin{abstract}
We establish several new topological generation results for the quantum permutation groups $S^+_N$ and the quantum reflection groups $H^{s+}_N$.  We use these results to show that these quantum groups admit sufficiently many ``matrix models''.  In  particular, all of these quantum groups have residually finite discrete duals (and are, in particular, hyperlinear), and certain ``flat'' matrix models for $S_N^+$ are inner faithful. 
\end{abstract}

\noindent {\em Key words: quantum permutation group, quantum reflection group, compact quantum group, discrete quantum group, residually finite, hyperlinear, matrix model}

\vspace{.5cm}

\noindent{MSC 2010: 20G42; 46L52; 16T20}

\tableofcontents

\section{Introduction}

The central objects of study in this paper are {\it quantum permutations} and their generalizations, {\it quantum reflections}.  Given $N \in \bN$ and an Hilbert space $H$, an $N\times N$ matrix $P = [P_{ij}]_{1 \le i,j \le N} \in M_N(B(H))$ ($B(H)$ being the C$^\ast$-algebra of bounded linear operators on $H$) is called a quantum permutation matrix (or {\it magic unitary}) if its entries $P_{ij}$ are self-adjoint projections satisfying the relations $\sum_i P_{ij} = 1_{B(H)} = \sum_{j} P_{ij}$ for each $1 \le i,j \le N$.  

The simplest examples of quantum permutation matrices are of course the classical permutation matrices (which correspond to those quantum permutations $P$ associated to a one-dimensional Hilbert space $H$).  In fact, more generally any quantum permutation $P$ with commuting entries $\{P_{ij}\}_{i,j}$ corresponds to a subset $X \subseteq S_N$ of permutation matrices.  Indeed, in this case the commutative C$^\ast$-algebra $C^*(\{P_{ij}\}_{i,j})$ generated by the $P_{ij}$'s is by Gelfand duality isomorphic to $C(X)$, the C$^\ast$-algebra of complex functions on some subset $X \subseteq S_N$.  In particular $P$ is identified this way with the identity function on $X \subset S_N \subset M_N(\bC)$.  On the other hand, if one now considers quantum permutations $P$ whose entries {\it do not} commute, the structure of these objects becomes much less well-understood.  From an operator algebraic point of view, this should come as no surprise, as the C$^\ast$-algebras $C^*(P_{ij} \ | \ 1 \le i,j \le N) \subset B(H)$ generated by the entries of a quantum permutation matrix $P$ with non-commuting entries can be highly non-trivial (e.g., can contain the free group C$^\ast$-algebras as quotients \cite{Wan98}).  Nonetheless, such ``genuinely quantum'' quantum permutations arise naturally in a variety of contexts.   For example, in quantum information theory, quantum permutation matrices arise naturally in the framework of non-local games and go under the name ``projective permutation matrices'' \cite{Ats16, Mu17, Lu17, Lu18}.  From the perspective of non-commutative geometry and quantum group theory, $N \times N$ quantum permutation matrices were discovered by Wang \cite{Wan98} to be precisely the structure that encode the {\it quantum symmetries} of a finite set of $N$ points.  More precisely, Wang considered the universal unital C$^\ast$-algebra \[A = C^*\Big(u_{ij}, 1 \le i,j \le N \ \big| \ u_{ij} = u_{ij}^* = u_{ij}^2 \ \& \ \sum_{i} u_{ij} = \sum_j u_{ij} = 1 \ \forall i,j\Big),\]
generated by the coefficients of a ``universal'' $N \times N$ quantum permutation matrix.  Wang then showed that there exists a {\it compact quantum group}, $S_N^+$, acting universally and faithfully on the set of  $N$ points in such a way that $A$ gets identified with the C$^\ast$-algebra $C(S_N^+)$ of ``continuous functions'' on the ``quantum space'' $S_N^+$.  The quantum group $S_N^+$ is called the {\it quantum permutation group} or {\it quantum symmetry group of $N$ points}.  In contrast to its classical counterpart, $S_N^+$ (or more precisely $A = C(S_N^+)$) is a highly non-commutative and infinite-dimensional object.  It is one of our main goals in this paper is to investigate to what extent the quantum permutation groups $S_N^+$ (and the quantum reflection groups) can be {\it approximated} by elementary finite-dimensional structures. We elaborate briefly on this now.

C$^\ast$-algebraic compact quantum groups as introduced in \cite{Wor87} form the basis of what by now is a rich theory, developing rapidly in a number of different directions. As indicated above, the perspective we adopt in this paper is that the Hopf (C$^*$-)algebras studied in \cite{Wor87,Wor88,Wor98} play the role of function algebras on a ``compact quantum space'' $\bG$ which is equipped with a group structure, and can equivalently be viewed as the complex group algebras of the Pontryagin dual ``discrete quantum group'' $\Gamma$. To keep matters simple, throughout this introduction we write $\bC \Gamma$ for the group algebra of a discrete quantum group $\Gamma$ (see \Cref{se.prel} below for details). 

One aspect of the theory of discrete quantum groups that presents itself naturally from this point of view is that of {\it approximation properties}. This typically refers to the ``accessibility'' of the quantum group (or its associated algebras) via finite structures of some type. The phrase `finite structure' is purposely vague, and lends itself to a variety of interpretations, for example:
\begin{itemize}
\item The {\it amenability} of a discrete quantum group implies the nuclearity of $C^*(\Gamma)$, the universal C$^*$-completion of $\bC \Gamma$. (I.e., the identity map on $C^*(\Gamma)$ can be point-norm approximated by finite-rank completely positive contractions).  The converse is also true, at least for Kac type discree quantum groups \cite{bt-amnbl}.
\item The {\it Haagerup approximation property} of $\Gamma$ corresponds to the point-norm approximation of the identity map on the reduced C$^\ast$-algebra $C^*_{r}(\Gamma)$ by certain well-behaved $L^2$-compact, contractive completely positive maps \cite{dfsw}. 
\item The {\it weak amenbility} of $\Gamma$ corresponds to the point-norm approximation of the identity map on the reduced C$^\ast$-algebra $C^*_{r}(\Gamma)$ by certain well-behaved  finite rank, uniformly bounded completely bounded maps.
\end{itemize}
The above are only a few scattered examples, as we cannot possibly do justice to the vast literature here. We refer to the survey \cite{br-apprx} and its sources for a more expansive discussion on approximation properties for discrete (in fact locally compact) quantum groups. The above list (and all those considered in \cite{br-apprx}) can be thought of as instances {\it internal} approximation of a quantum group by finite structures, since all the above approximating maps are from a given object to itself.  In (quantum) group theory there are of course approximation properties which have an external flavor in the sense that one approximates a given large object by mapping it into smaller auxilliary objects.  For example,
\begin{itemize}
\item A Kac type discrete quantum group $\Gamma$  is {\it hyperlinear} if its quantum group von Neumann algebra $\mathcal L(\Gamma)$ admits-finite dimensional matrix models relative to its Haar trace \cite{bcv}.
\item A Kac type discrete quantum group $\Gamma$ has the {\it Kirchberg factorization property} if there is a net $\varphi_k:C^*( \Gamma) \to M_{n(k)}(\bC)$ of contractive completely positive maps which are asymptotically trace-norm multiplicative and satisfy $h = \lim_k \text{tr}_{n(k)} \circ \varphi_k$ pointwise, where $h$ is the Haar trace and $\text{tr}_{n(k)}$ is the normalized matrix trace.
\item A (finitely generated) discrete quantum group $\Gamma$ is {\it residually finite} if the points of $\bC \Gamma$ are separated by its finite-dimensional $\ast$-representations \cite{chi-rfd,bbcw}.
\item A stronger form of residual finiteness of interest for us is the existence of a faithful or {\it inner faithful}  matrix model $\pi:\bC \Gamma \to  M_N(C(X))$ for some compact Hausdorff $X$ \cite{bb-inner,bc-pauli,bn-flat,bf-model}; we will have more to say about this concept below.   
\end{itemize}

It is the above list of external approximation properties that we are interested in establishing for (the duals of) Wang's quantum permutation groups $S^+_N$ and their generalizations $H^{s+}_N$ (the so-called {\it quantum reflection groups}  \cite{bv-reflection}). We recall some of the details in the preparatory \Cref{se.prel} below, and for now content ourselves to only remind the reader that $H^{s+}_N$ is a quantum version of the classical subgroup $H^s_N\subset GL_N$ consisting of $N\times N$ monomial matrices whose non-zero entries are $s^{th}$ roots of unity (so in particular $H^1_N\cong S_N$, the symmetric group on $N$ symbols). 

The non-commutative topology of $H^{s+}_N$ as a compact quantum group plays a central role in our study of finiteness and approximation properties for their discrete duals $\widehat{H^{s+}_N}$, and hence the types of results proven in the paper. We elaborate briefly: Let $\bG$ be a compact quantum group and let $\bG_i<\bG$ be a family of closed quantum subgroups of $\bG$. The condition that $\bG_i$ {\it topologically generate} $\bG$ was introduced in \cite[Definition 4]{bcv} for a pair of subgroups, but generalizes readily to arbitrary families.  In that paper, topological generation is used in the same fashion we do here: as a tool for lifting finiteness properties from the duals  $\Gamma_i = \widehat{\bG_i}$ to $ \Gamma = \widehat{\bG}$.  For that reason, we prove a number of topological generation results (\Cref{th.top-gen,th.refl-tg}) that can be summarized as:

\begin{theoremN}
  For all $1\le s\le \infty$ and $N\ge 6$, the quantum reflection group $H^{s+}_N$ is topologically generated by its quantum subgroups $S_N$ and $H^{s+}_{N-1}$. For $s=1$ the result also holds for $N=5$.
  \qedhere
\end{theoremN}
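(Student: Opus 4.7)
The approach I would take is through Woronowicz's Tannaka--Krein duality, which reduces topological generation to an intertwiner-space equality. Concretely, writing $u$ for the fundamental representation of $H^{s+}_N$ on $\bC^N$ and $\bG$ for the closed quantum subgroup of $H^{s+}_N$ topologically generated by $S_N$ and $H^{s+}_{N-1}$, one has $\bG = H^{s+}_N$ if and only if
\[
\mathrm{Hom}_{H^{s+}_N}\!\bigl(u^{\otimes k},\, u^{\otimes l}\bigr) \;=\; \mathrm{Hom}_{S_N}\!\bigl(u^{\otimes k},\, u^{\otimes l}\bigr) \;\cap\; \mathrm{Hom}_{H^{s+}_{N-1}}\!\bigl(u^{\otimes k},\, u^{\otimes l}\bigr)
\]
for every $k, l \ge 0$. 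The inclusion $\subseteq$ is immediate; the substance is the reverse inclusion, and this is the statement I would spend the bulk of the argument on.

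To pin it down, I would invoke the partition-categorical description of intertwiners for the ``easy'' quantum groups at hand. Each of the three intertwiner spaces above is spanned by explicit linear maps $T_p$ indexed by partitions $p$ of $k+l$ points, endowed with $\bZ_s$-decorations when $s > 1$: for $H^{s+}_N$ one takes $s$-balanced non-crossing partitions, for $S_N$ all partitions subject to the appropriate color-sum condition on each block, and for $H^{s+}_{N-1}$, viewed inside $H^{s+}_N$ as the stabilizer of the $N$-th coordinate, decorated partitions that become $s$-non-crossing after ``marking'' some subset of positions as lying in that fixed coordinate (contributing $e_N$ in the corresponding tensor factor). Given $T$ in the intersection on the right, I would expand it in this partition basis and then argue in two stages: the $S_N$-invariance enforces the global color-balance condition on each block, and the $H^{s+}_{N-1}$-invariance---applied after every admissible choice of marked subset---pins down the non-crossing structure of the underlying partition. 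The target is to conclude that only $s$-balanced non-crossing terms survive, which is exactly what is needed to land in $\mathrm{Hom}_{H^{s+}_N}$.

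The technical heart of the argument, and the step I expect to be the main obstacle, is a combinatorial lemma asserting that for $N \ge 6$ a partition $p$ on $k+l$ points whose restriction to the complement of any single position is non-crossing must itself be non-crossing. Such a statement reduces to locating any crossing in $p$ as a witness in some ``single-coordinate-excluded'' restriction, and the bound $N \ge 6$ presumably reflects that on very few points a genuine crossing can remain hidden under every such restriction. The exceptional case $s = 1$, $N = 5$ would then require a separate and more delicate analysis, likely exploiting the specific low-dimensional representation theory of $S_5^+$ and categorical identities peculiar to that setting; I would expect this edge case to proceed by a direct check on small-length intertwiner spaces rather than via the combinatorial lemma above.
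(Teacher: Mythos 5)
Your overall framework---reduce topological generation to an equality of intertwiner spaces via Tannaka--Krein duality and then argue with partition maps---is exactly the paper's starting point (its criterion that it suffices to check coinvariants $f:V^{\otimes k}\to\bC$, together with the Banica--Collins description of $\mathrm{hom}_{S_N^+}(V^{\otimes k},\bC)$ as the span of non-crossing partition maps). But the step you identify as the technical heart is not the right one, and the argument it suggests would not go through. An element $f$ of $\mathrm{hom}_{S_N}\cap\mathrm{hom}_{H^{s+}_{N-1}}$ cannot be canonically ``expanded in the partition basis'': over $S_N$ the maps $T_p$ for \emph{all} partitions $p$ span the hom-space but are linearly dependent, so there is no well-defined underlying partition whose crossings you could localize, and your proposed combinatorial lemma (a partition all of whose single-position-deleted restrictions are non-crossing is itself non-crossing) is not what the proof turns on. What the paper does instead is purely linear-algebraic on the functional $f$ itself: using $H^{s+}_{N-1}$-invariance it writes $f|_{V_N^{\otimes k}}$ as a linear combination of non-crossing partition maps, subtracts the corresponding globally invariant combination so as to arrange $f|_{V_N^{\otimes k}}=0$, and then propagates the vanishing to the mixed summands $V_N^{\otimes(k-l)}\otimes(\bC e_N)^{\otimes l}$ by combining $S_N$-invariance (which lets one permute which coordinate is fixed) with the fact that non-crossing partition maps are linearly independent on a space of dimension $\ge 4$. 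The hypothesis $N\ge 6$ enters precisely as $N-2\ge 4$ (two coordinates must be dropped before applying linear independence), not as a threshold below which a crossing can ``remain hidden.''

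The second gap is the exceptional case $s=1$, $N=5$. This is not accessible by a direct check on small intertwiner spaces: the paper deduces it from Banica's theorem that the inclusion $S_5<S_5^+$ is \emph{maximal} (it admits no intermediate quantum subgroups), a result which itself rests on the classification of index-$5$ subfactors; given maximality, one only needs to observe that $S_4^+<S_5^+$ is not contained in $S_5$, so the quantum group generated by $S_5$ and $S_4^+$ must be all of $S_5^+$. Finally, for $s=\infty$ you would still need a separate reduction: the paper handles it by showing that $H^{\infty+}_N$ is topologically generated by its quantum subgroups $H^{s+}_N$ for finite $s$ (via the inverse limit of fusion rings) and then invoking the finite-$s$ case.
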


This fits into a recurring pattern of topological generation results for infinite families of compact quantum groups. For instance, \cite[Lemma 3.12]{chi-rfd} says in different terms that for $N\ge 5$ the quantum unitary group $U^+_N$ is topologically generated by its quantum subgroup $\bS^1\times U^+_{N-1}$ (product in the category of compact quantum groups, dual to the {\it co}product $C(\bS^1)*C(U^+_{N-1})$ of C$^*$-algebras) and the classical unitary subgroup $U_N<U^+_N$. As for residual finiteness results, we use these  topological generation results to prove (see \Cref{th.rf,th.hns}):

\begin{theoremN}
For $N\ge 4$ and $1\le s\le \infty$ the discrete duals $\widehat{H^{s+}_N}$ of the quantum reflection groups are residually finite. 
  \qedhere
\end{theoremN}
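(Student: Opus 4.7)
The natural plan is induction on $N$, driven by the topological generation theorem stated immediately above. The bridge from topological generation to residual finiteness is a general principle which the paper presumably establishes separately, in the same spirit as the hyperlinearity-lifting argument of \cite{bcv}: \emph{if a compact quantum group $\bG$ is topologically generated by a family of closed quantum subgroups $\{\bG_i\}$ and each $\widehat{\bG_i}$ is residually finite, then so is $\widehat{\bG}$}. The crux is that topological generation forces the intersection $J=\bigcap_i\ker(\mathrm{Pol}(\bG)\twoheadrightarrow\mathrm{Pol}(\bG_i))$ of Hopf $*$-ideals to be zero---otherwise $\mathrm{Pol}(\bG)/J$ would be a proper Hopf $*$-quotient through which all the subgroup projections factor, contradicting minimality---so any nonzero $x\in\mathrm{Pol}(\bG)$ has nonzero image in some $\mathrm{Pol}(\bG_i)$. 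Residual finiteness of $\widehat{\bG_i}$ then supplies a finite-dimensional $*$-representation of $\mathrm{Pol}(\bG_i)$ detecting that image, and precomposing with the quotient map detects $x$ itself.

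Granting this lemma, the inductive step for $N\ge 6$ is automatic: the preceding theorem writes $H^{s+}_N$ as topologically generated by the classical group $S_N$---whose dual is trivially residually finite since $\mathrm{Pol}(S_N)$ is finite-dimensional---and by $H^{s+}_{N-1}$, residually finite by the inductive hypothesis. For $s=1$ the same induction also handles $N=5$.

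What remains are the base cases: $N=4$ for every $s$, and $N=5$ for $s\ge 2$. The case $S_4^+=H^{1+}_4$ is already known, with residual finiteness of $\widehat{S_4^+}$ flowing from the concrete (Pauli-type) finite-dimensional matrix models for $S_4^+$ constructed in \cite{bc-pauli,bn-flat}. For the remaining cases with $s\ge 2$ and $N\in\{4,5\}$, the natural route exploits the free wreath product presentation of $H^{s+}_N$ in terms of $\bZ_s$ and $S_N^+$, together with a result---either quoted from the literature or established directly by matrix-model manipulations in keeping with the paper's theme---asserting that residual finiteness of the discrete dual is preserved under free wreath products with finite groups. This reduces those cases back to residual finiteness of $\widehat{S_N^+}$ for $N\in\{4,5\}$, which is in hand from the $s=1$ portion of the argument.

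The chief obstacle is the foot of the induction: furnishing enough finite-dimensional matrix models to separate $\mathrm{Pol}(H^{s+}_N)$ at the small $N$ where the topological generation theorem is silent, namely $s\ge 2$ and $N\in\{4,5\}$. Once these base cases are secured, topological generation propagates residual finiteness to every $N\ge 4$ by a single line of induction.
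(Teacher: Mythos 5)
Your plan for finite $s$ is workable, but it inverts the logical structure of the paper's argument and leaves one genuine hole. The hole is the case $s=\infty$: your base-case mechanism is a lemma that residual finiteness is preserved under free wreath products \emph{with finite groups}, and $\bZ_\infty=\bZ$ is not finite (the relevant algebra $C^*(\bZ)$ is infinite-dimensional, and the paper's pushout result, \Cref{pr.push-rf}, really does require the free factor $\cC$ to be a finite-dimensional C$^*$-algebra). So neither your base cases $N\in\{4,5\}$ nor, consequently, your induction covers $H^{\infty+}_N$ at all. The missing idea is \Cref{le.hn-top-gen}: $H^{\infty+}_N$ is topologically generated by its quantum subgroups $H^{s+}_N$ for finite $s$ (this follows from the fusion-ring description of $\mathrm{Rep}(H^{s+}_N)$ and the embedding $R_\infty\hookrightarrow\varprojlim_s R_s$ of \Cref{le.limr}), after which \Cref{le.gen-rf} lifts residual finiteness from the finite-$s$ duals. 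Without some such device the statement as given is not proved.

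Beyond that, your route differs from the paper's in a way worth noting. Once you grant the wreath-product/pushout lemma needed for your base cases $N\in\{4,5\}$, $s\ge 2$ --- which is in fact the main technical content of the paper's proof (\Cref{pr.push-rf}, resting on the diamond-lemma decomposition of amalgamated free products and an RFD result for $\cA*_\cD\cA$) --- the induction on $N$ becomes superfluous: that lemma applies verbatim for every $N\ge 4$ and every finite $s$, reducing everything to residual finiteness of $\widehat{S_N^+}$, which is \Cref{th.rf}. This is exactly what the paper does; it explicitly remarks that \Cref{th.refl-tg} is \emph{not} used for residual finiteness, precisely because the topological generation statement for reflection groups is only known for $N\ge 6$ and the gap at $N=5$, $s\ge 2$ would break the induction (the paper records this as an open point). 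So your induction is self-consistent only because its base cases already contain the full-strength argument; the topological generation of $H^{s+}_N$ by $S_N$ and $H^{s+}_{N-1}$ buys you nothing here, whereas the topological generation results that do all the work are the ones for $S_N^+$ (feeding into \Cref{th.rf}) and the one over finite $s$ just described.
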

Here too there are precedents for other families: \cite{chi-rfd} treats the discrete duals of free unitary and orthogonal quantum groups. 

We regard the above theorem as one of the main results of the paper, but it has a number of powerful consequences, including the hyperlinearity and Kirchberg factorization property for the selfsame discrete quantum groups \cite{bbcw}, as well as improved estimates for the free entropy dimension of the generators of the associated von Neumann algebras $L^\infty(H^{s+}_N)$ - see Section \ref{section:FED}. 

At this point it is worth highlighting that although our strategy for proving residual finiteness results for the quantum groups $H_N^{s+}$ (by means of inductive topological generation methods) is the same as that used in prior works, there is one critical difference here. Unlike in the case of the free unitary/free orthogonal quantum groups which rely on the existence of  ``large'' smoooth Lie subgroups (namely $U_N$ and $O_N$, respectively), the quantum reflection groups only admit finite classical subgroups.  This difference turns out to be a fundamental obstruction to a straightforward extension of the inductive arguments of \cite{chi-rfd, bcv}.  To bypass this issue, we make essential use of a recent  remarkable result of Banica \cite[Theorem 7.10]{ban-uni} which establishes that there is no intermediate quantum subgroup for the inclusion $S_5 < S_5^+$.  The maximality of the inclusion $S_N < S_N^+$ is widely conjectured to be true for all $N$, and the case $N=5$ solved by Banica in \cite{ban-uni} represents a major advancement on this conjecture.  It is also interesting to note that Banica's proof of the maximality of the inclusion $S_5 < S_5^+$ is based on a reduction of this problem to the seemingly different problem of classifying II$_1$-subfactors at index $5$.   This latter problem, however, has recently been solved \cite{JoMoSn14, IzMoPePeSn15}.  The authors find this connection to the classification of subfactors highly intriguing.

The other major set of results in this paper pertains to the quantum permutation groups $S_N^+$.  In this case it turns out that we can say quite a lot more at the level of finite-dimensional representations.

While our residual finiteness results ensure the existence of enough finite-dimensional $\ast$-representations to separate points in the group algebras $\cA (S_N^+) = \bC\widehat{S^+_N}$, it is often desirable to have a single representation $\pi:\cA(S_N^+) \to B$, where $B$ is some ``nice'' C$^\ast$-algebra (e.g. finite-dimensional, or of the form $M_N(C(X))$) which encodes enough information about $S_N^+$ so as to generate an {\it asymptotically faithful} sequence of finite-dimensional representations $(\pi_k)_{k \in \bN}$ of $\cA(S_N^+)$.   The relevant concept we are after here is that of an {\it inner faithful} represention $\pi:\cA(S_N^+) \to B$.  We defer the precise definition of inner faithfulness to Section \ref{se.mod} but note here that a representation $\pi:\cA(S_N^+) \to B$ is inner faithful if and only if the sequence of representations 
\[
\pi_k:\cA(S_N^+) \to B^{\otimes k}; \qquad \pi_k = \pi^{\otimes k} \circ \Delta^{(k)};
\]
is  asymptotically faithful in the sense that $\bigcap_{k} \ker \pi_k = \{0\}$ \cite{SkSo16}.  In the above, $\Delta: \cA(S_N^+) \to \cA(S_N^+) \otimes \cA(S_N^+)$ denotes the coproduct and $\Delta^{(k)} = (\id \otimes \Delta^{(k-1)}) \circ \Delta$ for all $k \ge 2$.  In particular, this means that $\cA(S_N^+)$ faithfully embeds into a C$^\ast$-ultraproduct of the sequence of algebras $(B^{\otimes k})_{k \in \bN}$.  One particular ``minimal '' representation of $\cA (S_N^+)$ that has been conjectured to be inner faithful is Banica's {\it universal flat representation} \cite{bn-flat,bf-model}.  This particular representation takes the form $\pi:\cA(S_N^+) \to M_N(C(X_N))$, where $X_N \subset M_N(M_N(\bC))$ is the compact space of all $N \times N$ bistochastic matrices $P= (P_{ij})_{i,j}$ whose entries are rank-one projections in $P_{ij} \in M_N(\bC)$.  In this paper, we use modifications of our topological generation results to verify the conjectured inner faithfulness of the representation $\pi$ for almost all values of $N$ (cf. Corollary \ref{cor.div5}). 

\begin{theoremN}
For all $N \le 5$ and $N \ge 10$, the universal flat matric model $\pi:\cA(S_N^+) \to M_N(C(X_N))$ is inner faithful.
\end{theoremN}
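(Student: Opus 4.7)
The strategy is to exploit the standard reformulation of inner faithfulness as the equality of Hopf images: $\pi$ is inner faithful precisely when the smallest quantum subgroup $\bG_\pi\le S_N^+$ through which $\pi$ factors is all of $S_N^+$. I will therefore produce enough quantum subgroups inside $\bG_\pi$ to exhaust $S_N^+$, and the tool for converting subgroup inclusions into full generation will be the topological generation results proved earlier in the paper.

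For $N\le 3$ the quantum group $S_N^+$ coincides with $S_N$ and the flat matrix model, restricted to the classical permutation magic unitaries sitting inside $X_N$, is already faithful, so the claim is immediate. For $N=4$ I would invoke the well-known identification of $S_4^+$ with a twisted $SO(3)$ and check directly, via Pauli-matrix style rank-one projection magic unitaries inside $X_4$, that $\bG_\pi$ realises all of $S_4^+$. The case $N=5$ is the first genuinely interesting one: classical diagonal magic unitaries place $S_5$ inside $\bG_\pi$, and I will exhibit at least one point of $X_5$ whose entries do not all commute (any nontrivial quantum Latin square of order five produces such a point), showing that $\bG_\pi$ strictly contains $S_5$. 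Banica's maximality theorem \cite[Theorem 7.10]{ban-uni} on $S_5<S_5^+$ then immediately forces $\bG_\pi=S_5^+$.

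For $N\ge 10$ I will argue by induction on $N$, using the topological generation theorem from \Cref{th.top-gen} above (which says that $S_N^+$ is topologically generated by $S_N$ and $S_{N-1}^+$ for $N\ge 6$). As in the small cases, the inclusion $S_N\subseteq \bG_\pi$ is read off from classical permutation matrices in $X_N$. The crux is to obtain $S_{N-1}^+\subseteq \bG_\pi$, and for this I plan to decompose $N=k_1+k_2$ with $k_1,k_2$ in the already-settled range $\{5\}\cup\{n\ge 10\}$ (such a decomposition is available precisely when $N\ge 10$) and to construct points of $X_N$ that ``assemble'' two flat magic unitaries $Q_i\in X_{k_i}$ into a single rank-one magic unitary in $M_N(\bC)$, with the off-diagonal blocks filled by an auxiliary quantum Latin square in the complementary sectors of $\bC^N$. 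The induction hypothesis applied to $k_1$ and $k_2$ will transfer inner faithfulness from $S_{k_i}^+$ to a quantum subgroup of $\bG_\pi$ large enough, when combined with $S_N$, to topologically generate $S_N^+$.

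The main obstacle I expect is precisely this off-diagonal construction: the flat condition forbids any entry of a magic unitary in $X_N$ from vanishing, so a naive block-diagonal embedding $X_{k_1}\times X_{k_2}\hookrightarrow X_N$ is impossible, and the off-diagonal blocks must be dressed by genuinely quantum data. This rigidity is what should force the lower bound $N\ge 10$, because for $N\in\{6,\dots,9\}$ one cannot split $N$ as a sum of two integers in the previously solved set, and the supply of small-order quantum Latin squares seems too limited to furnish an off-diagonal dressing directly. Once the point-theoretic input is in place, the remaining steps, namely verifying that the constructed points really put $S_{k_i}^+$ into $\bG_\pi$ and then feeding the result into topological generation, should be routine.
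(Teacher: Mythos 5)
Your treatment of $N\le 5$ is essentially the paper's: classical/Pauli-type models for $N\le 4$, and for $N=5$ the combination of an inner faithful classical flat model (giving $S_5\subseteq\bG_\pi$), a non-commuting point of $X_5$, and Banica's maximality of $S_5<S_5^+$ is exactly the argument of \Cref{le.N=5}. The gaps are in the range $N\ge 10$.

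First, the arithmetic underlying your induction is false: it is not true that every $N\ge 10$ splits as $k_1+k_2$ with both $k_i\in\{5\}\cup\{n\ge 10\}$. For instance $N=11,12,13,14$ admit no such decomposition, and since $11$ then remains unsettled, so does $16=5+11$, and so on; your scheme only ever reaches multiples of $5$. The paper avoids this entirely because its reduction step (\Cref{pr.cj-ind}) requires an inner faithful flat model on only \emph{one} block, of size $M\ge 5$ with $N\ge 2M$; the complementary block of size $N-M$ is filled with a \emph{fixed} Latin square of rank-one projections and carries no inner-faithfulness requirement whatsoever. Taking $M=5$ then covers every $N\ge 10$ in a single step, with no induction on $N$.

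Second, your plan to conclude via \Cref{th.top-gen} by placing $S_{N-1}^+$ inside $\bG_\pi$ is not supported by the construction you describe: assembling two blocks of sizes $k_1,k_2$ would at best put (a subgroup of) $S_{k_1}^+*S_{k_2}^+$ into the Hopf image, and there is no route from that to $S_{N-1}^+$. The paper instead proves a generation statement tailored to exactly this situation, \Cref{pr.gen-diag}: $S_N^+$ is topologically generated by $S_N$ together with any $(M,N)$-large quantum subgroup in the sense of \Cref{def.large}, i.e.\ one through which $\cA(S^+_N)\to\cA(S^+_M)*\cA(S^+_{N-M})\to\cA(\bG)$ factors with $\cA(S^+_M)\to\cA(\bG)$ injective. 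Without that lemma (or an equivalent), your final step does not close. Your identification of the obstruction --- flat magic unitaries have no vanishing entries, so $X_{k_1}\times X_{k_2}$ cannot embed block-diagonally into $X_N$ --- is correct, and the fixed-Latin-square dressing in the proof of \Cref{pr.cj-ind} is precisely the device that circumvents it; but the surrounding architecture of your argument needs to be replaced as above.
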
 

Piggybacking on the proof of this result, we are able to moreover show that one can reduce the base space $X_N$ to only contain at most $3$ points and still achieve an inner faithful finite-dimensional representation. In other words, we have (cf. Theorem \ref{th.inner-unitary})

\begin{theoremN}
For all $N \le 5$ and $N \ge 10$, the quantum permutation group algebras $\cA(S_N^+)$ are {\it inner unitary}:  they admit inner faithful $\ast$-homomorphisms $\pi:\cA(S_N^+) \to B$, with $B$ a finite dimensional C$^\ast$-algebra.
\end{theoremN}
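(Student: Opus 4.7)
The plan is to refine the argument of the preceding theorem rather than to invoke it as a black box. That theorem asserts inner faithfulness of the universal flat matrix model $\pi:\cA(S_N^+)\to M_N(C(X_N))$ for $N\le 5$ and $N\ge 10$, and since $M_N(C(F))\cong M_N(\bC)^{\oplus |F|}$ is finite-dimensional whenever $F\subset X_N$ is finite, the target theorem reduces to exhibiting a finite subset $F$ (ideally with $|F|\le 3$) for which the restricted model $\pi_F$ remains inner faithful. Algebraically, inner faithfulness of $\pi_F$ is equivalent to the closed quantum subgroup of $S_N^+$ topologically generated by the points $x\in F$, viewed as $M_N(\bC)$-valued magic unitaries with rank-one projection entries, being all of $S_N^+$.

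The strategy mirrors the topological generation result $S_N^+ = \langle S_N,\ S_{N-1}^+\rangle$ of the introduction, valid for $N\ge 6$. I would build $F$ by separately selecting flat points realizing each of the two generating factors and concatenating the lists. For the classical factor, any classical permutation matrix is a legitimate element of $X_N$ (its entries being rank-one projections onto standard basis vectors), and a single appropriately chosen permutation of $S_N$ topologically generates a cyclic subgroup large enough to combine productively with the quantum factor. For the quantum factor, I would invoke the inductive hypothesis: $\cA(S_{N-1}^+)$ is assumed inner unitary with a model using at most $3$ matrix blocks, and each such block lifts to a flat point of $X_N$ via the canonical inclusion $S_{N-1}^+<S_N^+$ by padding with the rank-one projection onto the last coordinate at position $(N,N)$.

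For $N\ge 10$, combining the two contributions and invoking the topological generation theorem closes the induction, provided one can control the total size of $F$. The bound $|F|\le 3$ is delicate and requires either selecting the classical flat point so that it is absorbed into the inductively produced list, or running a tighter version of the topological generation argument in which a classical generator is already built into one of the quantum blocks. The base cases must be handled by hand: for $N\le 3$ the algebra $\cA(S_N^+)=C(S_N)$ is already finite-dimensional, while for $N=4$ and $N=5$ one exploits the small-$N$ structure of $\cA(S_N^+)$—most crucially Banica's maximality result for $S_5<S_5^+$—to write down an explicit finite list of flat points.

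The main obstacle is twofold. First, one must verify that the lifted flat points genuinely topologically generate the quantum subgroup $S_{N-1}^+$ once viewed inside $S_N^+$, rather than collapsing onto a proper quotient such as the classical $S_{N-1}$; this hinges on preserving the distinctly noncommutative character of the inductive model under the lift. Second, one must control the point count inductively so that it does not grow with $N$. This latter point is the true content of the bound $|F|\le 3$ and is the most delicate part of the argument, relying on the precise form of the topological generation theorem of the paper and a careful, uniform choice of the classical generator at each inductive step.
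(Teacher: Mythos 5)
Your high-level strategy (pair a classical generator with a ``quantum'' generator coming from a smaller quantum permutation group, then invoke topological generation) is in the right spirit, but the central mechanism you propose --- lifting a flat model of $S_{N-1}^+$ to flat points of $X_N$ ``by padding with the rank-one projection onto the last coordinate at position $(N,N)$'' --- does not work, and this is precisely the obstruction that forces the paper's proof into a different shape. If $(P_{ij})_{i,j\le N-1}\in X_{N-1}$ and you set $Q_{ij}=P_{ij}\oplus 0\in M_N(\bC)$ for $i,j\le N-1$ and $Q_{NN}=0\oplus 1$, then every remaining entry $Q_{iN}$, $Q_{Nj}$ ($i,j<N$) must still be a rank-one projection, and the row condition $\sum_j Q_{ij}=1_{M_N}$ forces $Q_{iN}=0\oplus 1$ for every $i$; but then the $N$th column sums to $N\cdot(0\oplus 1)\ne 1_{M_N}$. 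So a flat point of $X_{N-1}$ admits \emph{no} extension to a flat point of $X_N$ of the kind you describe, and the inductive step $N-1\to N$ collapses. (This is also why you should have been suspicious of the excluded range $[6,9]$: a working $N-1\to N$ induction with base case $N=5$ would cover it.)

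The paper avoids induction on $N$ altogether. It reuses the Latin-square ``corner'' construction from the proof of \Cref{pr.cj-ind}: one fixes rank-one projections $P_0,\dots,P_{N-1}$ summing to $1$, builds an $N\times N$ Latin square out of them, and considers the flat points of $X_N$ that agree with this Latin square outside the upper-left $M\times M$ corner. The corner entries then commute with $P=\sum_{i<M}P_i$ and, restricted to $\mathrm{Im}\,P\cong\bC^M$, realize the \emph{entire} flat model space $X_M$; filling the off-corner part of the first $M$ rows as a Latin rectangle is possible exactly when $N\ge 2M$, which with $M=5$ yields the bound $N\ge 10$. Taking the two flat points of $X_5$ that give an inner faithful model of $\cA(S_5^+)$ (Case 2 of the paper's proof, via Banica's maximality of $S_5<S_5^+$), transporting them into such corners to get $x',y'\in X_N$, and adding one classical inner faithful flat point $w\in X_N^{class}$, the Hopf image of $\pi_{x'}\oplus\pi_{y'}\oplus\pi_w$ contains $S_N$ and an $(M,N)$-large subgroup, hence is all of $S_N^+$ by \Cref{pr.gen-diag}/\Cref{cor.top-gen-bis} --- not by the $\langle S_N,S_{N-1}^+\rangle$ result you invoke. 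In particular the point count ($\le 3$) is automatic and the delicate ``absorption'' you flag never arises. One last remark: for the theorem \emph{as literally stated} (no flatness required), your induction could be salvaged by simply composing an inner unitary model of $\cA(S_{N-1}^+)$ with the quotient $\cA(S_N^+)\to\cA(S_{N-1}^+)$ and summing with the regular representation of $C(S_N)$; but that is not the argument you wrote, and it abandons the flat-model refinement that is the actual content of this section of the paper.
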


It is an artifact of our proof that we are unable to settle the cases $N \in [6,9]$ in the above theorems.  Our arguments rely heavily on our topological generation results for $S_N^+$ together with the crucial result of Banica \cite{ban-uni} stating that the inclusion $S_5 < S_5^+$ is maximal.

The remainder of the paper is organized as follows.

\Cref{se.prel} gathers a number of prerequisites to be used later. In \Cref{se.rf} we prove some of the main results, \Cref{th.rf,th.hns}, to the effect that the discrete Pontryagin duals of the quantum reflection groups are residually finite. This then also implies that they are hyperlinear and have the Kirchberg factorization property.  The proofs rely in large part on an inductive argument, turning on the fact that quantum permutation groups $S^+_N$ are topologically generated by their quantum subgroups $S_N$ and $S^+_{N-1}$ (\Cref{th.top-gen}). We also prove a similar result for quantum reflection groups in \Cref{th.refl-tg}; though strictly speaking not needed for residual finiteness of the quantum reflection groups, it might nevertheless be of some independent interest. 

In the final section \ref{se.mod}, we study inner faithful representations of Hopf $\ast$-algebras and prove that the universal flat representations are inner faithful for $N \le 5$ and $N\geq 10$.  In subsection \Cref{subse.inner-unitary} we prove \Cref{th.inner-unitary}, confirming that for sufficiently large $N$ the duals $\cA (S_N^+)$ admit {\it finite-dimensional} inner faithful representations.

\subsection*{Acknowledgements}

M. Brannan and A. Chirvasitu are partially supported by the US National Science Foundation with grants DMS-1700267 and DMS-1801011 respectively.

\section{Preliminaries}\label{se.prel}

\subsection{Generalities} 

Let us start by recalling some facts about compact quantum groups from \cite{Wor98}:

\begin{definition}\label{def.cqg}
  A {\it compact quantum group} is a unital C$^*$-algebra $C(\bG)$ equipped with a unital $\ast$-morphism
  \begin{equation*}
    \Delta: C(\bG)\to C(\bG) \otimes C(\bG)
  \end{equation*}
  (minimal tensor product) such that
  \begin{equation*}
    \mathrm{span}\{(a\otimes 1)\Delta(b)\ |\ a,b\in C(\bG)\} \text{ and }\mathrm{span}\{(1\otimes a)\Delta(b)\ |\ a,b\in C(\bG)\}
  \end{equation*}
are dense in $C(\bG) \otimes C(\bG)$.
\end{definition}

As is customary, we regard $C(\bG)$ as the algebra of continuous functions on the fictitious ``compact quantum space'' $\bG$. For this reason, the category of compact quantum groups is {\it dual} to that of C$^*$-algebras as in \Cref{def.cqg}. 

For a compact quantum group $\bG$ we denote the unique dense Hopf $*$-subalgebra of $C(\bG)$ by $\cA(\bG)$; it can be regarded alternatively as the complex group algebra $\bC\widehat{\bG}$ of the discrete quantum group $\widehat{\bG}$ whose Pontryagin dual is $\bG$, for which reason we might occasionally revert to that alternative notation for it. It is often also rendered as $\mathrm{Pol}(\bG)$ or $\mathcal O(\bG)$ in the literature. 

The C*-algebra $C(G)$ is equipped with a unique state $h:C(\bG)\to \bC$ (its {\it Haar state}) that is left and right-invariant in the sense that the diagram
\begin{equation*}
  \begin{tikzpicture}[baseline=(current  bounding  box.center),anchor=base,cross line/.style={preaction={draw=white,-,line width=6pt}}]
    \path (0,0) node (1) {$C(\bG)$} +(3,.5) node (2) {$C(\bG) \otimes C(\bG)$} +(6,0) node (3) {$C(\bG)$} +(3,-.5) node (4) {$\bC$}; 
    \draw[->] (1) to[bend left=6] node[pos=.5,auto]{$\scriptstyle \Delta$} (2)  ;
    \draw[->] (2) to[bend left=6] node[pos=.5,auto] {$\scriptstyle h\otimes\mathrm{id}$} (3);
    \draw[->] (1) to[bend right=6] node[pos=.5,auto,swap] {$\scriptstyle h$} (4);
    \draw[->] (4) to[bend right=6] node[pos=.5,auto,swap] {} (3);    
  \end{tikzpicture}
\end{equation*}
and the analogous mirror diagram (obtained by substituting $\mathrm{id}\otimes h$ for the upper right hand arrow) both commute. We will also occasionally refer to the {\it quantum group von Neumann algebra} $L^{\infty}(\bG)$, which is by definition the von Neumann algebra generated by the GNS representation of $C(\bG)$ associated to $h$. Often in  the literature the von Neumann algebra $L^\infty(\bG)$ is written as $\mathcal L(\widehat{\bG})$ in view of it being a generalization of the von Neumann algebra generated by the left-regular representation of a discrete group.

\begin{definition}\label{def.rep}
A {\it representation} of the compact quantum group $\bG$ is a finite-dimensional comodule over the Hopf $*$-algebra $\cA(\bG)$. We write $\mathrm{Rep}(\bG)$ for the category of representations of $\bG$. 
\end{definition}

We refer to \cite{rad-bk} for the relatively small amount of background needed here on comodules over coalgebras or Hopf algebras. 

\begin{definition}\label{def:subgroup}
Given two compact quantum groups $\bH$ and $\bG$, we say that $\bH$ is a {\it (closed)  quantum subgroup} of $\bG$ if there exists a surjective Hopf $\ast$-algebra morphism $\pi: \cA(\bG) \to \cA(\bH)$.  In this case, we write $\bH < \bG$.
\end{definition}

Note that if $\bH < \bG$ as above and $V$ is a $\bG$-comodule, then $V$ automatically becomes a $\bH$-comodule in a natural way. Indeed if $\alpha:V \to V \otimes \cA(\bG)$ is the associated corepresentation defining the comodule $V$ and $\pi: \cA(\bG) \to \cA(\bH)$ is the surjective Hopf $\ast$-morphism from above, then $V$ becomes an $\bH$-comodule via the corepresentation $(\id \otimes \pi)\alpha: V \to V \otimes \cA(\bH)$.  This ``restriction'' of representations of $\bG$ to representations of $\bH$, induces, at the level of Hom-spaces, natural inclusions  
\begin{equation*}
    \mathrm{hom}_{\bG}(V,W)\hookrightarrow \mathrm{hom}_{\bH}(V,W),
  \end{equation*}
for any pair of $\bG$ comodules $V,W$.

\subsection{Quantum reflection groups}\label{subse.refl} 

Quantum reflection groups were introduced in \cite[Definition 1.3]{bv-reflection} based on earlier work done in \cite{bbcc}. Following the former reference we denote them by $H^{s+}_N$.

\begin{definition}\label{def.refl}
  Let $N\ge 2$ and $1\le s\le \infty$.
  
  The underlying Hopf $*$-algebra $\cA=\cA(H^{s+}_N)$ of the {\it quantum reflection group} $H^{s+}_N$ is generated as a $*$-algebra by $N^2$ normal elements $u_{ij}$, $1\le i,j\le N$ such that
  \begin{itemize}
  \item the matrices $u=(u_{ij})_{i,j}$ and $u^t = (u_{ji})_{i,j} $ are both unitary in $M_N(\cA)$; 
   \item $p_{ij}=u_{ij}u^*_{ij}$ is a self-adjoint idempotent for each $1 \le i,j \le N$;
  \item $u^s_{ij}=p_{ij}$ for each $1 \le i,j \le N$,   
  \end{itemize}
  with the last relation absent when $s=\infty$.  

The coproduct $\Delta: \cA \to \cA \otimes \cA$ is determined by \[\Delta(u_{ij}) = \sum_{k=1}^N u_{ik} \otimes u_{kj} \qquad (1 \le i,j \le N).\]
\end{definition}

The quantum groups $H^{s+}_N$ are meant to be quantum analogues of their classical versions $H^s_N$ consisting of monomial $N\times N$ matrices whose non-zero entries are $s^{th}$ roots of unity. In particular, $s=1$ recovers the symmetric group $S_N$; this is also the case in the quantum setting, recovering the quantum groups introduced in \cite{Wan98}: 

\begin{definition}\label{def.sn}
Let $N\ge 2$. The {\it quantum symmetric group} $S^+_N$ is the quantum reflection group $H^{1+}_N$ from \Cref{def.refl}. Its Hopf $\ast$-algebra $\cA$ is freely generated as a $*$-algebra by an $N \times N$ {\it magic unitary matrix} $u=(u_{ij})_{ij}\in M_N(\cA)$, in the sense that all of the entries of $u$ are projections and the entries from each row and column add up to $1\in\cA$.   
\end{definition}

Now let $N\ge 2$ and $1\le s,t<\infty$ be positive integers, as in \Cref{def.refl}. The generators $u_{ij}$ of $\cA(H^{s+}_N)$ clearly satisfy the defining relations of $\cA(H^{st+}_N)$ as well, so we get a surjective Hopf $*$-algebra morphism
\begin{equation*}
  \cA(H^{st+}_N)\ni u_{ij}\mapsto u_{ij}\in \cA(H^{s+}_N). 
\end{equation*}
In other words, we have natural embeddings $H^{s+}_{N} < H^{st+}_N$.  These embeddings will feature again below.

Also for future use, we briefly recall some background on the representation theory of the quantum groups $H^{s+}_N$ from \cite{bv-reflection}. Let $F_s$ be the free monoid on the symbols in $\bZ_s$ (understood as $\bZ$ when $s=\infty$), equipped with the following operations:

\begin{itemize}
\item The involution $x\mapsto \overline{x}$ defined by
  \begin{equation*}
    a_1\cdots a_k \mapsto (-a_k)\cdots (-a_1),\ a_i\in \bZ_s;
  \end{equation*}
\item The {\it fusion} binary operation `$\cdot$' defined by
  \begin{equation*}
    (a_1\cdots a_k)\cdot(b_1\cdots b_\ell) = a_1\cdots a_{k-1}(a_k+b_1)b_2\cdots b_\ell. 
  \end{equation*}  
\end{itemize}

Then, according to \cite[Theorem 7.3]{bv-reflection}, the Grothendieck ring $R_s$ of the category of finite-dimensional representations of $H^{s+}_N$ has a basis $\{x_f\}$ over $\bZ$ indexed by $f\in F_s$, and the multiplication resulting from the tensor product
\begin{equation*}
  x_f x_g = \sum_{f=vz,g=\overline{z}w}\left(x_{vw}+x_{v\cdot w}\right). 
\end{equation*}
We remark that \cite[Theorem 7.3]{bv-reflection} is not clearly stated in this manner for $s=\infty$, but the proof goes through essentially unchanged for both finite and infinite $s$. A more general argument can be found in \cite{lemeux2013fusion}. 

Restriction via the inclusion $H^{s+}_N< H^{st+}_N$ induces a ring morphism $R_{st}\to R_s$ sending the generator $x_1$, $1\in \bZ_{st}$ to $x_1$, $1\in \bZ_s$. In particular, we have

\begin{lemma}\label{le.limr}
  The natural map
  \begin{equation*}
    R_{\infty}\to \varprojlim_s R_s
  \end{equation*}
  is an embedding, where the directed inverse limit is taken over the positive integers ordered by division: $s\le st$. \qedhere
\end{lemma}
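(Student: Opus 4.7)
The plan is to pin down explicitly the restriction map $R_\infty \to R_s$ on the stated $\bZ$-bases, and then exploit the fact that any element of $R_\infty$ is supported on finitely many letters of $\bZ$.

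First I would show that the restriction map $R_\infty \to R_s$ sends $x_f$ to $x_{\bar f}$, where for $f = a_1 \cdots a_k \in F_\infty$ we write $\bar f := \bar a_1 \cdots \bar a_k \in F_s$ for the letter-wise reduction modulo $s$. To justify this, consider the $\bZ$-linear map $\phi : R_\infty \to R_s$ defined by $x_f \mapsto x_{\bar f}$ on basis vectors. Using the stated fusion rule
\[
x_f x_g = \sum_{f=vz,\, g=\overline{z}w} \bigl(x_{vw} + x_{v \cdot w}\bigr),
\]
a direct check shows that $\phi$ is a ring homomorphism: decompositions $f = vz$ in $F_\infty$ correspond bijectively to decompositions $\bar f = \bar v \bar z$ in $F_s$ at the matching split position, and both the involution $z \mapsto \bar z$ and the fusion $(v,w) \mapsto v \cdot w$ commute with letter-wise reduction modulo $s$. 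Since $R_\infty$ is generated as a ring by $x_1$ and its conjugate $x_{-1}$ (as is any Grothendieck ring of a compact quantum group generated by its fundamental representation and its dual), and since the restriction functor is a tensor $*$-functor sending the fundamental representation $u$ of $H^{s t+}_N$ to that of $H^{s+}_N$, the restriction map must send $x_{\pm 1}$ to $x_{\pm 1}$. The ring hom is therefore uniquely determined and agrees with $\phi$.

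Given this description, the injectivity of $R_\infty \to \varprojlim_s R_s$ is immediate. Let $\xi = \sum_{f \in S} c_f x_f$ be a nonzero element of $R_\infty$, with $S \subseteq F_\infty$ finite and $c_f \ne 0$ for $f \in S$. Let $M$ be the maximum absolute value of a letter appearing in some word in $S$, and pick any $s > 2M$. Then reduction modulo $s$ is injective on $\{-M, \ldots, M\}$, so the induced map $F_\infty \to F_s$ is injective on the finite set $S$. The image of $\xi$ in $R_s$ is thus $\sum_{f \in S} c_f x_{\bar f}$, a nonzero $\bZ$-linear combination of distinct elements of the $\bZ$-basis of $R_s$.

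The main obstacle is essentially the first step: identifying the restriction map on the basis $\{x_f\}$, which requires unpacking the fusion formula and verifying compatibility of involution and fusion with letter-wise reduction modulo $s$. Once that is in hand, injectivity reduces to the soft observation that any element of $R_\infty$ has finite support and so involves only finitely many letters of $\bZ$, which can therefore be kept pairwise distinct after reducing modulo a sufficiently large $s$.
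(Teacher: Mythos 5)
The weak point is your first step: the map $\phi$ sending $x_f$ to $x_{f \bmod s}$ (letter-wise reduction) is \emph{not} a ring homomorphism in general, and consequently the identification of $\phi$ with the restriction map fails. The problem is the index set of the fusion rule: the sum runs over \emph{coupled} pairs of decompositions $f=vz$, $g=\overline{z}w$, and the coupling condition --- that the involution $\overline{z}$ of the chosen suffix of $f$ be a prefix of $g$ --- is strictly easier to satisfy in $F_s$ than in $F_\infty$, since letters need only agree modulo $s$. Your verification addresses the bijectivity of decompositions of a single word and the compatibility of involution and fusion with reduction, but not this coupling. Concretely, for $s=2$ one has $x_1 x_1 = x_{11}+x_2$ in $R_\infty$ (only $z=\emptyset$ contributes, since $\overline{(1)}=(-1)\ne(1)$ in $\bZ$), whereas in $R_2$ the decomposition $z=(1)$ also contributes because $(-1)=(1)$ in $\bZ_2$, giving $x_1x_1 = x_{11}+x_0+x_{\emptyset}$; hence $\phi(x_1x_1)=x_{11}+x_0 \ne \phi(x_1)\phi(x_1)$. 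Correspondingly restriction cannot act by letter-wise reduction on the basis: $\mathrm{res}(x_{11}+x_2)=\mathrm{res}(x_1^2)=x_{11}+x_0+x_{\emptyset}$ in $R_2$, so at least one of $x_{11},x_2$ acquires an extra summand under restriction.

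What survives, and what your injectivity step actually needs, is the weaker statement that $\mathrm{res}(x_f)=x_{f\bmod s}+(\text{classes of strictly shorter words})$, or that $\mathrm{res}(x_f)=x_{f\bmod s}$ on the nose once $s$ is large enough relative to $f$ (its letters \emph{and} its length). Either version requires an actual argument --- e.g.\ induction on $|f|$ using the fusion rules and the fact that restriction fixes $x_{\pm 1}$, noting that all the ``extra'' terms appearing in the $R_s$-product but not in the $R_\infty$-product come from decompositions with $|z|\ge 1$ and hence index strictly shorter words than the leading term $x_{(fg)\bmod s}$. With that triangularity in hand your finite-support argument goes through: choose $f_0$ of maximal length in the support of $\xi$ and $s$ large enough that reduction is injective on the support; the coefficient of $x_{f_0\bmod s}$ in $\mathrm{res}(\xi)$ is then exactly $c_{f_0}\ne 0$. (For comparison, the paper offers essentially no proof --- the lemma is presented as an immediate consequence of restriction sending $x_1$ to $x_1$ --- so your instinct to make the restriction map explicit is the right one; it is only the justification of the explicit formula, and possibly the size of the bound on $s$, that has a hole.)
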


\subsection{Finiteness properties and topological generation}

The following is a combination of \cite[Definition 2]{bcv} and \cite[Definition 1.12]{bbcw}. 

\begin{definition}\label{def.fin}
A discrete quantum group $\widehat{\bG}$ is {\it finitely generated} if $\bC \widehat{\bG}$ is finitely generated as an algebra. If $\widehat{\bG}$ is finitely generated then we say it is {\it residually finite} if $\bC\widehat{\bG}$ embeds as a $*$-algebra into a (possibly infinite) direct product of matrix algebras. 

Moreover, $\widehat{\bG}$ is said to have the {\it Connes embedding property} (or is {\it Connes-embeddable} or {\it hyperlinear}) if it is of Kac type and the von Neumann algebra $(L^{\infty}(\bG)$ admits a Haar state-preserving embedding into the   ultrapower $R^{\omega}$ of the hyperfinite II$_1$-factor. 
\end{definition}

\begin{remark}\label{rem.RFD}
Note that residual finiteness implies the {\it Kirchberg factorization property} of \cite[Definition 2.10]{bw} by \cite[Theorem 2.1]{bbcw}. In turn, Kirchberg factorization implies hyperlinearity (e.g. \cite[Remark 2.6]{bbcw}); in conclusion, residual finiteness is stronger than Connes embeddability for a discrete quantum group. 
\end{remark}

We also recall from \cite[Definition 4]{bcv} the notion of topological generation for compact quantum groups:

\begin{definition}\label{def.top-gen}
  A family of compact quantum subgroups $(\bG_i < \bG)_{i \in I}$ {\it topologically generate} $\bG$ if, for every pair of representations $V$, $W$ of $\bG$, the natural inclusion map
  \begin{equation*}
    \mathrm{hom}_{\bG}(V,W)\hookrightarrow \bigcap_{i \in I}  \mathrm{hom}_{\bG_i}(V,W)
  \end{equation*}
  is an isomorphism.  In this case, we write $\bG = \langle \bG_i\rangle_{i \in I}$
\end{definition}

We will need the following alternative description of topological generation, which is almost immediate given \Cref{def.top-gen}; see also \cite[Proposition 3.5]{bcv}.

\begin{lemma}\label{le.alt-top-gen}
A family of quantum subgroups $(\bG_i < \bG)_{i \in I}$ topologically generates $\bG$ if and only if for every  $\bG$-representation $V$ a map $f:V\to \bC$ that is a morphism over every $\bG_i$ is a morphism over $\bG$ (i.e., $f  \in \cap_{i \in I} \mathrm{hom}_{\bG_i}(V,\bC) \implies f  \in \mathrm{hom}_{\bG}(V,\bC) $). Equivalently, it suffices to check this for all irreducible representations $V$.
  \qedhere
\end{lemma}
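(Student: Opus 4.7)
My plan is to reduce \Cref{le.alt-top-gen} to \Cref{def.top-gen} in two stages: first I would upgrade from the special case $W=\bC$ to an arbitrary target $W$ using duality, then further reduce checking the condition on all representations $V$ to checking it on irreducible $V$ using semisimplicity of $\mathrm{Rep}(\bG)$.

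The forward implication of the first equivalence is immediate, since specializing \Cref{def.top-gen} to $W=\bC$ (the trivial representation) gives exactly the stated condition on functionals $f:V\to\bC$. For the reverse direction, I would use that every $\bG$-representation $W$ is finite-dimensional and so has a dual $W^{*}\in\mathrm{Rep}(\bG)$; the transpose of the evaluation $W\otimes W^{*}\to\bC$ then furnishes a natural linear isomorphism
\[
\mathrm{hom}(V,W)\;\xrightarrow{\;\sim\;}\;\mathrm{hom}(V\otimes W^{*},\bC),
\]
which, by naturality, sends $\bG$- (resp.\ $\bG_{i}$-) intertwiners to $\bG$- (resp.\ $\bG_{i}$-) intertwiners. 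Applying the assumed equality for maps to $\bC$ to the representation $V\otimes W^{*}$ then directly yields $\mathrm{hom}_{\bG}(V,W)=\bigcap_{i}\mathrm{hom}_{\bG_{i}}(V,W)$ for all pairs $(V,W)$, which is exactly the condition in \Cref{def.top-gen}.

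For the reduction to irreducible $V$, I would invoke semisimplicity of $\mathrm{Rep}(\bG)$ to decompose any $V$ as $V=\bigoplus_{\alpha}V_{\alpha}^{\oplus n_{\alpha}}$ with $V_{\alpha}$ irreducible $\bG$-representations. Each summand is in particular a $\bG_{i}$-subrepresentation for every $i$, so
\[
\mathrm{hom}_{\bullet}(V,\bC)\;=\;\bigoplus_{\alpha}\mathrm{hom}_{\bullet}(V_{\alpha},\bC)^{\oplus n_{\alpha}}\qquad \text{for }\bullet\in\{\bG\}\cup\{\bG_{i}\}_{i\in I},
\]
and intersections over $i$ distribute over this direct sum. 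Hence the equality $\mathrm{hom}_{\bG}(V,\bC)=\bigcap_{i}\mathrm{hom}_{\bG_{i}}(V,\bC)$ for an arbitrary $V$ reduces to the analogous equality on each irreducible summand $V_{\alpha}$.

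I do not anticipate any substantive obstacle: the argument is elementary, and the only real ``idea'' is the tensor--hom duality identification. Once written up formally the proof should amount to a few short lines, with semisimplicity and standard duality doing all the work.
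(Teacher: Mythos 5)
Your proof is correct. The paper itself supplies no argument here---it declares the lemma ``almost immediate'' from \Cref{def.top-gen} and points to \cite[Proposition 3.5]{bcv}---and the two steps you spell out (the natural equivariant identification $\mathrm{hom}(V,W)\cong\mathrm{hom}(V\otimes W^{*},\bC)$, which exists since every finite-dimensional $\bG$-representation has a dual and restriction to each $\bG_i$ is monoidal, followed by the semisimple decomposition to reduce to irreducible $V$) are exactly the standard justification the authors are leaving implicit.
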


In particular, we have the following sufficient criterion for topological generation:

\begin{corollary}\label{cor.tg-suff}
  Let $(\bG_i< \bG)_{i \in I}$ be a family of quantum subgroups of a compact quantum group and assume that for every irreducible non-trivial $\bG$-representation $V$ there is some $i$ such that the restriction of $V$ to $\bG_i$ contains no trivial summands.

  Then, $\bG$ is topologically generated by the $\bG_i$. 
\end{corollary}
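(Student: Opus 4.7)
The plan is to verify the criterion of \Cref{le.alt-top-gen}: for every irreducible $\bG$-representation $V$, I must show that any linear map $f\colon V\to \bC$ which is $\bG_i$-equivariant for \emph{every} $i\in I$ is automatically $\bG$-equivariant.

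First I would dispose of the case where $V$ is the trivial one-dimensional representation: any linear functional $V\to\bC$ is then tautologically a $\bG$-comodule morphism, so there is nothing to check. The entire content is in the non-trivial irreducible case.

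So fix $V$ irreducible and non-trivial. By the standing hypothesis there exists some index $i_0\in I$ such that the restriction $V|_{\bG_{i_0}}$ contains no copy of the trivial $\bG_{i_0}$-representation. Since the category of finite-dimensional comodules over the Hopf $*$-algebra $\cA(\bG_{i_0})$ is semisimple (a standard consequence of the existence of the Haar state on a compact quantum group), the absence of a trivial summand in $V|_{\bG_{i_0}}$ is equivalent to the vanishing
\begin{equation*}
  \mathrm{hom}_{\bG_{i_0}}(V,\bC)=0,
\end{equation*}
because any nonzero element of this Hom-space would surject $V$ onto $\bC$, and semisimplicity would then split off a trivial sub-comodule. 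Consequently any $f\in\bigcap_{i\in I}\mathrm{hom}_{\bG_i}(V,\bC)$ lies in $\mathrm{hom}_{\bG_{i_0}}(V,\bC)=0$, so $f=0$, which is certainly a $\bG$-morphism.

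Applying \Cref{le.alt-top-gen}, we conclude that the family $(\bG_i)_{i\in I}$ topologically generates $\bG$. No step here is a genuine obstacle: the only subtle point is the semisimplicity input, which lets one convert the ``no trivial summand'' hypothesis into the vanishing of a Hom-space, and this is standard for compact quantum groups.
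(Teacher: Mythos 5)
Your proof is correct and follows essentially the same route as the paper: both reduce via \Cref{le.alt-top-gen} to showing $\bigcap_i \mathrm{hom}_{\bG_i}(V,\bC)=\{0\}$ for non-trivial irreducible $V$, using the fact that a nonzero morphism $V\to\bC$ would (by semisimplicity) exhibit a trivial summand, contradicting the hypothesis for the chosen index. Your write-up merely makes the semisimplicity step and the trivial-$V$ case more explicit than the paper does.
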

\begin{proof}
  This follows from \Cref{le.alt-top-gen}: a morphism $f:V\to \bC$ of representations witnesses an embedding of the trivial representation into $V$, and the hypothesis ensures that a non-trivial irreducible $V\in \mathrm{Rep}(\bG)$ retains the property of having no trivial summands over $\bG_i$ for some $i$, meaning that
  \begin{equation*}
    \bigcap_{i \in I} \mathrm{hom}_{\bG_i}(V,\bC) = \{0\} = \mathrm{hom}_{\bG}(V,\bC). 
  \end{equation*}
\end{proof}

Topological generation appears under a different name in \cite[$\S$2]{chi-rfd}. Rephrasing Corollary 2.16 therein more appropriately for our setting yields

\begin{lemma}\label{le.gen-rf}
  A finitely generated discrete quantum group $\widehat{\bG}$ is residually finite if and only if its dual $\bG$ is topologically generated by a family of subgroups $\bG_i< \bG$ with residually finite $\widehat{\bG_i}$.
  \qedhere
\end{lemma}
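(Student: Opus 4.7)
The forward direction is immediate: if $\widehat{\bG}$ is residually finite, then the singleton family $\{\bG\}$ topologically generates $\bG$ and has residually finite Pontryagin dual by hypothesis. Attention focuses on the converse.

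Suppose $(\bG_i)_{i \in I}$ topologically generates $\bG$, each $\widehat{\bG_i}$ is residually finite, and write $\pi_i: \cA(\bG) \twoheadrightarrow \cA(\bG_i)$ for the associated Hopf $*$-algebra surjections. My plan is to produce a separating family of finite-dimensional $*$-representations of $\cA(\bG)$ via a two-step argument. The crucial first step is to show that the family of iterated coproduct/restriction maps
\[
\cA(\bG) \xrightarrow{\Delta^{(k)}} \cA(\bG)^{\otimes k} \xrightarrow{\pi_{i_1}\otimes\cdots\otimes\pi_{i_k}} \cA(\bG_{i_1})\otimes\cdots\otimes \cA(\bG_{i_k}),
\]
ranging over all $k\geq 1$ and tuples $(i_1,\ldots,i_k)\in I^k$, collectively separates the points of $\cA(\bG)$. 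Granting this, each $\cA(\bG_{i_j})$ has a separating family of finite-dimensional $*$-representations by hypothesis; tensoring such representations and precomposing with the iterated coproduct/restriction maps furnishes a separating family of finite-dimensional $*$-representations of $\cA(\bG)$, whence $\widehat{\bG}$ is residually finite.

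The main obstacle is the separation step above. Topological generation, as given in \Cref{def.top-gen} and \Cref{le.alt-top-gen}, is by definition a statement about intertwiner spaces between $\bG$-representations, whereas what we require is pointwise separation in the Hopf $*$-algebra $\cA(\bG)$. Note that a naive single-level version of the separation claim would fail already in classical examples: for $\bG=S_3$ and $\bG_1=\langle(12)\rangle$, $\bG_2=\langle(123)\rangle$ the subgroups topologically generate but $\ker\pi_1\cap\ker\pi_2\neq 0$, so iteration of $\Delta$ really is essential. The bridge must therefore come from the Peter-Weyl decomposition $\cA(\bG) = \bigoplus_V C_V$: assuming some nonzero $a\in \cA(\bG)$ were killed by all iterated coproduct/restriction maps, one would argue, by unpacking its Peter-Weyl components, that there exists a $\bG$-representation $V$ and a linear functional on $V$ that is $\bG_i$-invariant for every $i$ yet fails to be $\bG$-invariant, contradicting \Cref{le.alt-top-gen}. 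The classical heuristic is that topological generation of $\bG$ by the $\bG_i$'s corresponds, dually, to every element of $\bG$ being expressible as a finite product of elements taken from the $\bG_i$'s, which translates under Hopf duality into precisely the iterated-coproduct embedding above. The full argument is carried out in \cite[Corollary~2.16]{chi-rfd}, which I would invoke directly.
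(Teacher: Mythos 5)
Your proposal is correct and ultimately rests on the same source as the paper: the paper offers no independent argument for this lemma, simply rephrasing \cite[Corollary 2.16]{chi-rfd}, which is exactly the result you invoke at the end. Your additional sketch of the mechanism (iterated coproduct/restriction maps separating points, with the bridge to intertwiner spaces via Peter--Weyl) accurately describes what that citation contains, so the two proofs are essentially identical.
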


Recall the embeddings $H^{s+}_N< H^{st+}_N$ from \Cref{subse.refl}. The remark we will need in the sequel is

\begin{lemma}\label{le.hn-top-gen}
For $N\ge 2$ the quantum group $H^{\infty +}_N$ is topologically generated by its quantum subgroups $H^{s+}_N$ for finite $s$. 
\end{lemma}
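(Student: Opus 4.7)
I plan to invoke \Cref{cor.tg-suff}: it suffices to show that for every non-trivial irreducible representation $x_f$ of $H^{\infty+}_N$ (indexed by a nonempty $f = a_1 \cdots a_k \in F_\infty$), some finite $s$ makes the restriction $\rho_s(x_f) \in R_s$ contain no copy of the trivial representation $x_\emptyset$. Here $\rho_s \colon R_\infty \to R_s$ is the ring morphism induced by the inclusion $H^{s+}_N < H^{\infty+}_N$, which sends $x_1 \mapsto x_1$ by the discussion preceding the lemma. In fact, I shall prove the stronger statement that for $s$ large enough, $\rho_s(x_f)$ equals $x_f$ itself viewed in $R_s$ via the letter-wise mod-$s$ reduction.

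The key input is the fusion rule specialised to a right-hand factor of length one, read off from the definition by enumerating decompositions $(v,z,w)$:
\[
x_{a_1 \cdots a_{k-1}} \cdot x_{a_k} = x_{a_1 \cdots a_k} + x_{a_1 \cdots a_{k-2}(a_{k-1} + a_k)} + \delta_{a_{k-1}+a_k,\,0}\, x_{a_1 \cdots a_{k-2}}.
\]
Solving for the leading term $x_f$ and iterating expresses $x_f \in R_\infty$ as an explicit integer polynomial in products of the generators $x_a$, $a \in \bZ$; the crucial feature of the recursion is that every intermediate basis element $x_g$ that appears is indexed by a word whose letters are consecutive partial sums $a_p + a_{p+1} + \cdots + a_q$ of the entries of $f$. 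Setting $M := \max_{1 \le p \le q \le k} |a_p + \cdots + a_q|$, for any $s > 2M$ every condition arising in the recursion --- vanishing of partial sums modulo $s$, matchings $a \equiv -b \pmod{s}$ that would create new fusion decompositions --- coincides with its counterpart over $\bZ$, so the identical polynomial identity is valid in $R_s$. Combining this with the observation that $\rho_s(x_a) = x_a$ for $|a| \le M$ (which follows from an inductive dimension count starting from $\rho_s(x_1) = x_1$ and the duality $\rho_s(x_{-1}) = x_{-1}$), one concludes $\rho_s(x_f) = x_f$ as a single non-trivial basis element of $R_s$, which indeed contains no $x_\emptyset$.

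The main obstacle will be the combinatorial bookkeeping needed to verify that the recursion never introduces letters outside the finite set of consecutive partial sums of $f$, together with the dimension argument pinning down $\rho_s(x_a) = x_a$. Once these are in hand, the conclusion is automatic on choosing $s > 2M$, and \Cref{cor.tg-suff} completes the proof. In spirit the argument is a quantitative refinement of \Cref{le.limr}, the injectivity $R_\infty \hookrightarrow \varprojlim_s R_s$: rather than merely asserting this injectivity, one exhibits, for each $f$, a cofinal family of $s$ at which $\rho_s(x_f)$ literally equals $x_f$, thereby ruling out a trivial summand.
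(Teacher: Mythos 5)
Your proposal is correct and follows essentially the same route as the paper: the paper's proof is exactly an application of \Cref{cor.tg-suff} together with the observation (delegated there to \Cref{le.limr}) that each non-trivial irreducible of $H^{\infty+}_N$ restricts to a single non-trivial irreducible over $H^{s+}_N$ for suitable finite $s$. Your argument merely makes that citation quantitative by exhibiting an explicit threshold $s>2M$ via the fusion-rule recursion, which is a fair (and correct) unpacking of the same idea.
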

\begin{proof}
  According to \Cref{le.limr} the criterion of \Cref{cor.tg-suff} is satisfied: indeed, the former result ensures that every non-trivial irreducible $H^{\infty+}_N$-representation remains irreducible and non-trivial over some $H^{s+}_N$.
\end{proof}

\subsection{Residual finite-dimensionality}\label{subse.rfd}  

We focus here on $*$-algebras satisfying the condition required of $\bC \widehat{\bG}$ in \Cref{def.fin}:

\begin{definition}\label{def.rfd}
A $*$-algebra $\cA$ is {\it residually finite-dimensional} or {\it RFD} if it embeds as a $*$-algebra in a product of matrix algebras.   
\end{definition}

\begin{remark}\label{re.rfd}
The notion is used frequently in the context of C$^*$-algebras, but here we are interested in the purely $*$-algebraic version.   
\end{remark}

We gather a number of general observations on the RFD property for later use. First, since residual finite-dimensionality obviously passes to $*$-subalgebras, we will need to know that certain natural morphisms between free products with amalgamation (or {\it pushouts}, as we will also refer to them) are embeddings. The following result is likely well known, but we include it here for completeness. 

\begin{lemma}\label{le.emb-push}
  Suppose we have the following commutative diagram of complex algebras, all of whose arrows are embeddings.
  \begin{equation*}
    \begin{tikzpicture}[baseline=(current  bounding  box.center),anchor=base,cross line/.style={preaction={draw=white,-,line width=6pt}}]
      \path (0,0) node (1l) {$\cD$} +(2,1) node (2l) {$\cA$} +(2,-1) node (3l) {$\cB$}
      +(4,0) node (1r) {$\cD$} +(6,1) node (2r) {$\cA'$} +(6,-1) node (3r) {$\cB'$}; 
      \draw[->] (1l) to[bend left=6] (2l);
      \draw[->] (1l) to[bend right=6] (3l);
      \draw[->] (1r) to[bend left=6] (2r);
      \draw[->] (1r) to[bend right=6] (3r);
      \draw[->] (1l) to node[pos=.5,auto] {$\scriptstyle \cong$} (1r);
      \draw[->] (2l) to (2r);
      \draw[->] (3l) to (3r);      
  \end{tikzpicture}
\end{equation*}
If $\cD$ is finite-dimensional and semisimple then the canonical map $\cA*_{\cD}\cB\to \cA'*_{\cD}\cB'$ is one-to-one.
\end{lemma}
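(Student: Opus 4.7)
The plan is to reduce the injectivity statement to a standard normal-form description of amalgamated free products, using that all $\cD$-bimodules split because $\cD$ is semisimple.

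The first observation is that since $\cD$ is finite-dimensional semisimple, the enveloping algebra $\cD\otimes\cD^{\mathrm{op}}$ is again semisimple, so every $\cD$-bimodule is projective and hence flat. In particular, tensoring over $\cD$ preserves injections on either side, and hence iterated tensor products of $\cD$-bimodule embeddings are again embeddings. This will be the key ingredient at the end.

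Next, I would choose compatible bimodule splittings. Because $\cD\hookrightarrow\cA$ is a split embedding of $\cD$-bimodules, fix a $\cD$-subbimodule $A_0\subset\cA$ with
\begin{equation*}
  \cA = \cD\oplus A_0.
\end{equation*}
Now $\cA\hookrightarrow \cA'$ is a $\cD$-subbimodule inclusion, so by semisimplicity we can pick a $\cD$-bimodule complement $V$, giving $\cA' = \cA\oplus V = \cD\oplus A_0\oplus V$. Set $A_0' := A_0\oplus V$; the inclusion $A_0\hookrightarrow A_0'$ is then a $\cD$-bimodule embedding. Do the same on the $\cB$-side to get $B_0\hookrightarrow B_0'$.

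The third step invokes the classical normal form for amalgamated free products when complements split as bimodules (see, e.g., Cohn's \emph{Free Rings and their Relations}): one has canonical $\cD$-bimodule identifications
\begin{equation*}
  \cA*_\cD\cB \;\cong\; \cD \;\oplus\; \bigoplus_{n\ge 1}\; \bigoplus_{\mathrm{alt}}\; X_1\otimes_\cD X_2\otimes_\cD \cdots \otimes_\cD X_n
\end{equation*}
where the inner sum is over alternating sequences $(X_1,\dots,X_n)$ with $X_i\in\{A_0,B_0\}$ and $X_i\ne X_{i+1}$; the analogous decomposition holds for $\cA'*_\cD\cB'$ with $A_0',B_0'$ in place of $A_0,B_0$. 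The canonical map $\cA*_\cD\cB\to\cA'*_\cD\cB'$ respects the grading by word length and sends the $(X_1,\dots,X_n)$-summand on the left into the corresponding $(X_1',\dots,X_n')$-summand on the right by the iterated tensor product of the inclusions $A_0\hookrightarrow A_0'$ and $B_0\hookrightarrow B_0'$. By the flatness observation from the first step, each such component is injective, and as the components land in distinct summands on the right, the map as a whole is injective.

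The main obstacle is the normal-form identification in the third step; everything else is essentially formal. This is classical, but one must verify it carries over unchanged once the complements are $\cD$-subbimodules, which is precisely what semisimplicity of $\cD$ guarantees. An alternative to quoting it is to build $\cD\oplus\bigoplus_n\bigoplus_{\mathrm{alt}}X_1\otimes_\cD\cdots\otimes_\cD X_n$ by hand as a $\cD$-bimodule, equip it with the obvious concatenate-and-collapse multiplication, check this gives an algebra receiving compatible maps from $\cA$ and $\cB$ over $\cD$, and verify the universal property to identify it with $\cA*_\cD\cB$.
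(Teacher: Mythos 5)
Your proposal is correct and follows essentially the same route as the paper: decompose $\cA=\cD\oplus\cA_1$, $\cB=\cD\oplus\cB_1$ compatibly with the primed versions using semisimplicity of $\cD\otimes\cD^{\circ}$, invoke the normal-form decomposition of the amalgamated free product into alternating tensor products over $\cD$ (the paper cites Bergman's Corollary 8.1 where you cite Cohn), and observe that the map respects this decomposition componentwise. The only cosmetic difference is that the paper concludes injectivity by noting each tensor-product summand on the left is a direct summand of its counterpart on the right, whereas you conclude it via flatness of $\cD$-bimodules; both are immediate consequences of the same semisimplicity.
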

\begin{proof}
Denote by  $\cD^{\circ}$ the opposite algebra of $\cD$.  Under the hypothesis on $\cD$ the enveloping algebra $\cD\otimes \cD^{\circ}$ is semisimple, and hence the category of $\cD$-bimodules (which are simply $\cD\otimes \cD^{\circ}$-modules) is semisimple. It follows that the inclusions
  \begin{equation*}
    \cD\to \cA\to \cA'
  \end{equation*}
are split as $\cD$-bimodule maps, and hence we have direct sum decompositions
\begin{equation}\label{eq:dab}
  \cA = \cD\oplus \cA_1,\  \cA'=\cD\oplus \cA'_1 = \cD\oplus \cA_1\oplus \cA_2 
\end{equation}
and similarly for the $\cB$ side of the diagram. 

According to \cite[Corollary 8.1]{bg-diamond} the pushout $\cA*_{\cD}\cB$ decomposes as a direct sum of tensor products of the form
\begin{equation}\label{eq:ts}
  T_1\otimes T_2\cdots \otimes T_k,\quad T_i\text{ chosen alternately from } \{\cA_1,\cB_1\}
\end{equation}
in the category of $\cD$-bimodules. The analogous decomposition holds for $\cA'*_{\cD}\cB'$, and due to \Cref{eq:dab} the tensor product \Cref{eq:ts} is a summand in its counterpart
\begin{equation*}
  T'_1\otimes \cdots\otimes T'_k,\quad T'_i\in \{\cA'_1,\cB'_1\}\text{ alternately},\quad T'_i=\cA'_1 \iff T_i=\cA_1. 
\end{equation*}
The conclusion that $\cA*_{\cD}\cB\to \cA'*_{\cD}\cB'$ is an embedding follows.
\end{proof}

There are analogues of this in the C$^*$-algebra literature: see e.g. \cite[Proposition 2.2]{adel} and \cite[Theorem 4.2]{ped-psh}. We will only use \Cref{le.emb-push} in the context of $*$-algebras, with all embeddings being $*$-algebra morphisms. An immediate consequence of \Cref{le.emb-push} is

\begin{corollary}\label{cor.rfd-psh}
  Under the hypotheses of \Cref{le.emb-push}, suppose furthermore that all maps are $*$-algebra morphisms. If $\cA'*_{\cD}\cB'$ is RFD, then so is $\cA*_{\cD}\cB$. 
  \qedhere
\end{corollary}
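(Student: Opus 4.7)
The corollary is essentially a direct consequence of the preceding lemma combined with the trivial observation (noted in the paper just before \Cref{le.emb-push}) that residual finite-dimensionality is inherited by $*$-subalgebras. So the plan is very short.

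First, I would apply \Cref{le.emb-push} to the given commutative diagram to conclude that the canonical map
\begin{equation*}
\Phi: \cA *_{\cD} \cB \longrightarrow \cA' *_{\cD} \cB'
\end{equation*}
is injective as a map of complex algebras. The lemma gives this under the assumption that $\cD$ is finite-dimensional and semisimple, which is part of the hypothesis being inherited here.

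Next, I would observe that when all of the arrows in the diagram are $*$-algebra morphisms, the pushouts inherit canonical $*$-structures (since the relations defining $\cA *_{\cD} \cB$ are compatible with the involution), and the map $\Phi$ is then a $*$-algebra morphism. Combined with injectivity, this realizes $\cA *_{\cD} \cB$ as a $*$-subalgebra of $\cA' *_{\cD} \cB'$.

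Finally, if $\cA' *_{\cD} \cB'$ is RFD, meaning it embeds as a $*$-algebra into a product $\prod_\alpha M_{n(\alpha)}(\bC)$, then composing with $\Phi$ yields a $*$-algebra embedding of $\cA *_{\cD} \cB$ into the same product, so $\cA *_{\cD} \cB$ is RFD as well. There is no real obstacle in this argument; all the genuine content sits in \Cref{le.emb-push}, whose key ingredient is the $\cD$-bimodule decomposition from \cite[Corollary 8.1]{bg-diamond}.
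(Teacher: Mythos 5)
Your proposal is correct and matches the paper's intent exactly: the paper states this corollary without a separate proof precisely because it is the immediate composition of the injection from \Cref{le.emb-push} (which is a $*$-morphism when all arrows are) with the assumed RFD embedding of $\cA'*_{\cD}\cB'$, using that the RFD property passes to $*$-subalgebras. Nothing further is needed.
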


We now turn to the technical result of this section, which will be needed later on.

\begin{proposition}\label{pr.aa}
  If $\cA$ is an RFD $*$-algebra and $\cD\subset \cA$ is a finite-dimensional commutative C$^*$-subalgebra then $\cA*_{\cD}\cA$ is RFD. 
\end{proposition}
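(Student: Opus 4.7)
The strategy is to exhibit $\cA *_\cD \cA$ as a $*$-subalgebra of a manifestly RFD $*$-algebra. Writing $\cD \cong \bC^n$ with minimal orthogonal projections $e_1, \ldots, e_n$ summing to $1$, the universal property of the amalgamated pushout shows that a fin-dim $*$-representation of $\cA *_\cD \cA$ on $\bC^N$ amounts to a pair $(\rho_1, \rho_2)$ of $*$-representations of $\cA$ on $\bC^N$ satisfying $\rho_1(e_i) = \rho_2(e_i)$ in $M_N(\bC)$ for every $i$.

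For each fin-dim $*$-representation $\pi: \cA \to M_k(\bC)$ of the RFD algebra $\cA$, set $r_i := \mathrm{rank}(\pi(e_i))$, and let $\bG_\pi := \{U \in \cU(k) : [U, \pi(e_i)] = 0 \,\forall\, i\} \cong \prod_i \cU(r_i)$ be the compact Lie group of unitaries in $M_k(\bC)$ commuting with $\pi(\cD)$. For any $U \in \bG_\pi$ the pair $(\pi, \mathrm{Ad}_U \circ \pi)$ descends to a $*$-representation $\rho^\pi_U: \cA *_\cD \cA \to M_k(\bC)$. Assembling over $U$ yields a single $*$-algebra homomorphism
\[
\Phi^\pi: \cA *_\cD \cA \to M_k(\bC)\otimes\cO(\bG_\pi),
\]
sending the first copy of $\cA$ to $a \mapsto \pi(a)\otimes 1$ and the second to $a \mapsto u(\pi(a)\otimes 1)u^*$ with $u$ the tautological unitary in $M_k(\cO(\bG_\pi))$, where $\cO(\bG_\pi)$ denotes the coordinate Hopf $*$-algebra. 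The codomain is RFD: $\cO(\bG_\pi)$ is a commutative $*$-algebra whose $*$-characters (point evaluations on $\bG_\pi$) separate points, and passing to matrix algebras preserves RFD. Moreover, any fin-dim pair $(\rho_1,\rho_2)$ arising as above can be realised as a direct summand of $(\pi,\mathrm{Ad}_W\circ\pi)$ for $\pi := \rho_1\oplus\rho_2$ and $W$ the swap unitary, which lies in $\bG_\pi$ precisely because $\rho_1(e_i)=\rho_2(e_i)$; so no fin-dim $*$-representation of $\cA *_\cD \cA$ is missed by the family $\{\Phi^\pi\}_\pi$.

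It then remains to prove that the assembled map $\Phi := \prod_\pi \Phi^\pi$ is injective, which together with the RFD-ness of the codomain will conclude. For this I would invoke the normal form of elements of the amalgamated pushout used in the proof of \Cref{le.emb-push} (via \cite[Corollary 8.1]{bg-diamond}): writing $\cA = \cD \oplus \cA_1$ as a $\cD$-bimodule, $\cA *_\cD \cA \cong \cD \oplus \bigoplus_{k\ge 1} \cA_1^{\otimes_\cD k}$ as $\cD$-bimodule, with a unique alternating-word presentation. Given $0 \ne x$, some tensor-word coefficient is nonzero; let $F \subset \cA$ be the finite set of $\cA$-factors occurring. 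By the RFD hypothesis on $\cA$, choose $\pi$ whose restriction to $\mathrm{span}(F)$ is injective. Then $\Phi^\pi(x)$ is a matrix-valued polynomial in $U \in \bG_\pi$, obtained by the alternating conjugation pattern $\pi(a_1)\cdot U\pi(a'_1)U^*\cdot\pi(a_2)\cdot U\pi(a'_2)U^*\cdots$.

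The main obstacle will be precisely to verify that this matrix-valued polynomial is not identically zero on $\bG_\pi$. The key mechanism is that the matrix entries of $U$, viewed inside $\cO(\bG_\pi)$, are algebraically independent modulo the defining unitarity relations of $\prod_i \cU(r_i)$; combined with the alternating structure of the normal form and the injectivity of $\pi$ on $\mathrm{span}(F)$, this rules out global cancellation. Executing this genericity argument carefully — tracking how distinct alternating words contribute distinct monomials in the $U$-entries modulo unitarity — is the real technical content of the proof.
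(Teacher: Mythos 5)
There is a genuine gap, and you have in fact located it yourself: the final step --- showing that for a suitable finite-dimensional $\pi$ the matrix-valued function $U\mapsto \Phi^\pi(x)(U)$ is not identically zero on $\bG_\pi$ --- is not a routine ``genericity'' verification to be executed later, but is essentially the whole theorem. What you are attempting to prove directly is the statement that amalgamated free products of finite-dimensional C$^*$-algebras over a common finite-dimensional subalgebra are residually finite-dimensional, which is the nontrivial result \cite[Theorem 4.2]{adel} that the paper invokes as a black box inside \Cref{le.aa-fin}. Your heuristic that the entries of $U$ are ``algebraically independent modulo unitarity'' does not by itself rule out cancellation among the contributions of distinct alternating words: the $\cD$-balanced tensor relations and the constraint $[U,\pi(e_i)]=0$ impose genuine linear dependencies among the resulting monomials, and injectivity of $\pi$ on the finite-dimensional span of the letters does not guarantee that their images sit in sufficiently general position (in arguments of Exel--Loring type one typically must enlarge $\pi$, e.g.\ pass to multiples, to achieve the required freeness). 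Until this non-vanishing is actually established, the map $\Phi=\prod_\pi\Phi^\pi$ has not been shown injective and the proof is incomplete. (The preceding observation that every finite-dimensional representation of $\cA*_\cD\cA$ is a summand of some $(\pi,\mathrm{Ad}_W\circ\pi)$ is correct but does not help: exhausting the finite-dimensional representations is useless without knowing they separate points.)

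For comparison, the paper avoids this issue entirely by splitting the work differently. Using the $\cD$-bimodule decomposition $\cA=\cD\oplus\cA_1$ and the word decomposition of the pushout from \cite[Corollary 8.1]{bg-diamond} (together with \Cref{cor.rfd-psh}), it first reduces to the case where $\cA$ is a \emph{finite-dimensional} C$^*$-algebra, since any given nonzero element survives projection onto a finite subproduct of matrix blocks. In that case it cites \cite[Theorem 4.2]{adel} for residual finite-dimensionality of the full C$^*$-pushout, and the only remaining algebraic content is to embed the algebraic pushout $\cA*_\cD\cA$ into its C$^*$-envelope; this is done by exhibiting an explicit faithful Hilbert-space representation built from the trace-preserving conditional expectation $E:\cA\to\cD$ and the associated Hilbert $\cD$-modules $T_{\bf i}$. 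If you want to keep your self-contained approach, the honest path is to carry out the unitary-genericity argument in full in the finite-dimensional case --- at which point you will have reproved \cite[Theorem 4.2]{adel} rather than applied it.
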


Before going into the proof we treat a particular case.

\begin{lemma}\label{le.aa-fin}
\Cref{pr.aa} holds when $\cA$ is a finite-dimensional C$^*$-algebra. 
\end{lemma}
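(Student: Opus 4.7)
The plan is to identify $\cA*_\cD\cA$ up to Morita equivalence with the full group C$^*$-algebra of a free product of cyclic groups, whose residual finite-dimensionality is classical.

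Setup. Since $\cD$ is a finite-dimensional commutative C$^*$-algebra, $\cD \cong \bC^n$ with minimal orthogonal projections $p_1, \ldots, p_n$ summing to $1$. Since $\cA$ is finite-dimensional, $\cA = \bigoplus_s M_{d_s}(\bC)$, and the inclusion $\cD \subset \cA$ is encoded by multiplicities $m_{i,s} := \mathrm{rank}_{M_{d_s}}(p_i)$ with $\sum_i m_{i,s} = d_s$. Fix a compatible matrix-unit system $\{e^s_{jk}\}$ for $\cA$ so that each $p_i$ is a sum of diagonal units. Because $\cA$ is already a C$^*$-algebra with abundant Hilbert-space $*$-representations, the algebraic pushout embeds canonically into its maximal C$^*$-envelope $\cA *_\cD^{\max}\cA$, so it suffices to prove that the latter is RFD as a C$^*$-algebra.

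Structural analysis. Fix a single rank-one diagonal projection $q$ lying below some $p_i$. Within $\cA *_\cD^{\max}\cA$, the projection $q$ is full: the rank-one diagonal units inside each block of $\cA$ are Murray--von Neumann equivalent to $q$ via partial isometries coming from $\cA$ itself, and distinct blocks are linked to one another by alternating products of partial isometries from the two copies of $\cA$ that are joined on $\cD$. The corner $q(\cA *_\cD^{\max}\cA)q$ is then generated by $\bC q$ together with ``loops'' $w = v_1^* v_2$, where $v_1, v_2$ are matrix-unit partial isometries transporting $q$ to other minimal diagonal projections and then back, with $v_1$ drawn from one copy of $\cA$ and $v_2$ from the other. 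Careful analysis of the relations imposed by source/range matching and by the identification on $\cD$ produces a presentation of a full group C$^*$-algebra $C^*(G)$ for a free product of cyclic groups $G = \bZ/n_1 * \cdots * \bZ/n_k * \bZ * \cdots * \bZ$, the number and orders of factors being determined by the multiplicity data $\{m_{i,s}\}$. Morita equivalence then gives $\cA *_\cD^{\max}\cA \cong M_N(C^*(G))$ for a suitable $N$.

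Conclusion and main obstacle. With the identification $\cA *_\cD^{\max}\cA \cong M_N(C^*(G))$ in place, RFD is immediate: free products of cyclic groups are residually finite, and their full group C$^*$-algebras are RFD --- Choi's theorem for $C^*(F_k)$, together with its Exel--Loring extension to free products of finite-dimensional C$^*$-algebras (of which each $\bC[\bZ/n_i]$ is a special case) --- and RFD is stable under matrix amplification. The main technical obstacle I anticipate is the explicit matching of generators and relations in the corner with those of $C^*(G)$: clean bookkeeping of reduced alternating words modulo the $\cD$-amalgamation takes some effort. A fully abstract alternative would be to invoke Armstrong--Dykema--Exel--Li's theorem asserting that the full amalgamated free product of RFD C$^*$-algebras over a finite-dimensional common subalgebra is RFD, provided there exist compatible faithful GNS-extending conditional expectations onto the subalgebra; the canonical trace on $\cA$ supplies such an expectation $E: \cA \to \cD$, which would yield the lemma at once and allow one to sidestep the combinatorial identification above.
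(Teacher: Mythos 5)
Your primary route breaks down at the structural claim that $\cA*_{\cD}\cA$ is Morita equivalent to $M_N(C^*(G))$ for $G$ a free product of cyclic groups. This is false in general: the lemma permits $\cD=\bC 1\subset M_2(\bC)=\cA$, and for the full unital free product $M_2(\bC)*_{\bC}M_2(\bC)$ the Cuntz pushout formula for the $K$-theory of free products gives $K_0\cong(\bZ\oplus\bZ)/\langle(2,-2)\rangle\cong\bZ\oplus\bZ/2$, which has torsion, whereas $K_0(C^*(G))$ is free abelian for every free product of cyclic groups; since Morita equivalence preserves $K$-theory, no such identification can exist. (Relatedly, the corner analysis is more delicate than you indicate: the amalgamation identifies only the projections of $\cD$ across the two copies, not a full matrix-unit system, and minimal projections in distinct blocks of $\cA$ are not Murray--von Neumann equivalent inside $\cA$ itself, so both the fullness of $q$ and the group structure of the corner require arguments that the example above shows cannot go through in general.)

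Your fallback is essentially the paper's actual argument: it invokes the Armstrong--Dykema--Exel--Li theorem (their Theorem 4.2, on full amalgamated free products of finite-dimensional C$^*$-algebras) to conclude that the full C$^*$-pushout $\overline{\cA*_{\cD}\cA}$ is RFD. However, you dismiss as ``canonical'' the one step that then still carries real content, namely that the \emph{algebraic} pushout injects into its universal C$^*$-completion. The fact that $\cA$ has many $*$-representations does not by itself produce a faithful bounded $*$-representation of $\cA*_{\cD}\cA$; a priori the universal C$^*$-seminorm could have a nonzero kernel on the algebraic object. The paper proves injectivity by hand: starting from the trace-preserving conditional expectation $E:\cA\to\cD$, it decomposes $\cA*_{\cD}\cA$ as a direct sum of $\cD$-bimodule tensor powers of $\ker E$ (via the diamond-lemma decomposition), turns each summand into a Hilbert space using the $\cD$-valued inner product composed with $\tau$, and checks that left multiplication yields a faithful bounded representation on the resulting Fock-type Hilbert space. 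Supplying that construction, or an equivalent argument, is necessary to close your proof.
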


This in turn requires some preparation. More precisely, we first build a faithful Hilbert space representation of the pushout $\cA*_{\cD}\cA$. Throughout the present discussion $\cD\subset \cA$ are as in the statement of \Cref{le.aa-fin}. Consider the canonical conditional expectation $E:\cA\to \cD$ that preserves an arbitrary but fixed faithful tracial state $\tau$ on $\cA$. This means that
  \begin{equation*}
    \begin{tikzpicture}[baseline=(current  bounding  box.center),anchor=base,cross line/.style={preaction={draw=white,-,line width=6pt}}]
      \path (0,0) node (1) {$\cA$} +(2,.5) node (2) {$\cD$} +(4,0) node (3) {$\bC$}; 
      \draw[->] (1) to[bend left=6] node[pos=.5,auto] {$\scriptstyle E$} (2);
      \draw[->] (2) to[bend left=6] node[pos=.5,auto] {$\scriptstyle \tau|_{\cD}$} (3);
      \draw[->] (1) to[bend right=6] node[pos=.5,auto,swap] {$\scriptstyle \tau$} (3);      
    \end{tikzpicture}
  \end{equation*}
Moreover, the map $E$ is contractive, completely positive, and splits the inclusion $\cD\to \cA$ in the category of $\cD$-bimodules (see e.g. \cite[$\S$IX.4]{Tak2} for background on expectations on operator subalgebras). 

We denote by $\cA_{\ell}$ and $\cA_r$ the copies of $\ker(E)$ in the left and respectively right hand side free factor of $\cA*_{\cD}\cA$. Then, as a consequence of \cite[Corollary 8.1]{bg-diamond}, we have
\begin{equation*}
  \cA*_{\cD}\cA = \bigoplus_{{\bf i}} T_{{\bf i}}
\end{equation*}
where ${\bf i}$ ranges over the words on ${\ell,r}$ with no consecutive repeating letters and e.g.
\begin{equation*}
  T_{\ell r\ell \cdots} = \cA_{\ell}\otimes \cA_r\otimes \cA_{\ell}\otimes\cdots
\end{equation*}
with tensor products over $\cD$. The empty word is allowed, with $T_{\emptyset}=\cD$.

Each $T_{\bf i}$ is then naturally a Hilbert $\cD$-module (e.g. \cite[Chapter 15]{wo}). Composing the $\cD$-valued inner product on $T_{\bf i}$ further with the inner product
\begin{equation*}
 \braket{x|y} = \tau(x^*y),\ x,y\in \cD 
\end{equation*}
makes each $T_{\bf i}$ into a Hilbert space. Left multiplication makes the algebraic direct sum $\bigoplus_{\bf i}T_{\bf i}$ a faithful module over $\cA*_{\cD}\cA$ (it is simply the left regular representation of the algebra in question). The Hilbert space completion of $\bigoplus_{\bf i} T_{\bf i}$ is thus a faithful Hilbert space representation of $\cA*_{\cD} \cA$.  

\pf{le.aa-fin}
\begin{le.aa-fin}
  The fact that the full C$^*$-pushout $\overline{\cA*_{\cD}\cA}$ is residually finite-dimensional follows from \cite[Theorem 4.2]{adel}. It thus remains to show that the algebraic pushout $\cA*_{\cD}\cA$ embeds in its C$^*$-envelope.

  In other words, we want to argue that $\cA*_{\cD}\cA$ admits a faithful representation on some Hilbert space. This, however, is precisely what we constructed above in the discussion preceding the proof: the Hilbert space direct sum of $T_{\bf i}$ for ${\bf i}$ ranging over words on $\{\ell,r\}$ with no repeating consecutive letters is such a representation. 
\end{le.aa-fin}

\begin{remark}
  The construction of the Hilbert modules $T_{\bf i}$ sketched above features prominently in free probability; see e.g. \cite[$\S$5]{voi-sym} and \cite[$\S$3.8]{vd-free}. 
\end{remark}

\pf{pr.aa}
\begin{pr.aa}
  The RFD condition implies that there is an embedding $\cA\to \prod_{i\in I} M_{n_i}$ for an index set $I$ (or arbitrary cardinality). By \Cref{cor.rfd-psh}, it thus suffices to assume that $\cA$ {\it is} such a product of matrix algebras to begin with. We therefore make this assumption throughout the rest of the proof.

  We first appeal once more to \cite[Corollary 8.1]{bg-diamond} to conclude that since the embedding $\cD\to \cA$ splits as
  \begin{equation*}
    \cA= \cD\oplus \cA_1
  \end{equation*}
  in the category of $\cD$-bimodules (because $\cD\otimes \cD^{\circ}$ is semisimple), we have a decomposition
  \begin{equation}\label{eq:t1k}
    \cA*_{\cD}\cA\cong \bigoplus T_1\otimes \cdots \otimes T_k 
  \end{equation}
  where $T_i$ are chosen from among the two copies of $\cA_1$. An arbitrary element in $\cA*_{\cD}\cA$ can thus be expressed as a sum of elements of finitely many tensor products as in \Cref{eq:t1k}. It follows that if $x$ is non-zero then it maps to a non-zero element of a pushout $\cB*_{\cD}\cB$ through some projection
  \begin{equation*}
    \cA=\prod_{i\in I}M_{n_i}\to \prod_{i\in F}M_{n_i}=\cB
  \end{equation*}
  for a finite subset $F\subseteq I$. In conclusion, it will be enough to further assume that $\cA$ is a finite product of matrix algebras, i.e. a finite-dimensional C$^*$-algebra. This is \Cref{le.aa-fin}, hence the conclusion. 
\end{pr.aa}

\begin{remark}\label{re.li-shen}
  Once more, there are versions of \Cref{pr.aa} applicable to C$^*$-algebras; \cite[Corollary 2]{ls-rfd} is one example.
\end{remark}

We conclude this section with one more proposition which will have direct application in the next section to the residual finiteness of the duals of quantum reflection groups.

\begin{proposition}\label{pr.push-rf}
  Let $\cB$ be an RFD $*$-algebra, $\cD\subset \cB$ a finite-dimensional commutative C$^*$-subalgebra, and $\cC$ a finite-dimensional C$^*$-algebra. Then,
  \begin{equation*}
    \cC*\cB/[\cC,\cD]
  \end{equation*}
  is RFD. 
\end{proposition}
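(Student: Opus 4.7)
The plan is to rewrite $\cC*\cB/[\cC,\cD]$ as an amalgamated free product to which an argument analogous to that of \Cref{pr.aa} applies. The first step is to establish the identification
$$
\cC*\cB/[\cC,\cD]\;\cong\;(\cC\otimes_{\bC}\cD)*_{\cD}\cB,
$$
where $\cC\otimes\cD$ contains $\cD$ as $1\otimes\cD$, and this copy is identified with $\cD\subset\cB$ in the pushout. This is a purely formal universal-property check: both sides are initial among unital $*$-algebras equipped with $*$-homomorphisms $\cC\to\cR$ and $\cB\to\cR$ whose images commute on $\cD$, with mutually inverse identifications $c\otimes d\leftrightarrow c\cdot d$. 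Because $\cD$ is commutative and finite-dimensional, $\cC\otimes\cD$ is automatically a finite-dimensional C$^*$-algebra, so the problem reduces to showing that a pushout of an RFD algebra $\cB$ with a finite-dimensional C$^*$-algebra $\cC\otimes\cD$ over a finite-dimensional commutative C$^*$-subalgebra $\cD$ is itself RFD.

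I then mimic the proof of \Cref{pr.aa}. Choosing an embedding $\cB\hookrightarrow\prod_{i\in I}M_{n_i}$ and applying \Cref{cor.rfd-psh} (with the identity on $\cC\otimes\cD$ and this embedding on $\cB$), it suffices to treat the case $\cB=\prod_{i\in I}M_{n_i}$. Splitting each of $\cC\otimes\cD$ and $\prod_i M_{n_i}$ as $\cD\oplus(\cdot)_1$ in the category of $\cD$-bimodules -- possible because $\cD\otimes\cD^{\circ}$ is semisimple -- and invoking \cite[Corollary 8.1]{bg-diamond}, the pushout decomposes as a direct sum of alternating tensor products over $\cD$. Any fixed non-zero element involves only finitely many of the matrix blocks $M_{n_i}$, so projecting onto a finite subproduct $\cB''=\prod_{i\in F}M_{n_i}$ produces a non-zero image in $(\cC\otimes\cD)*_{\cD}\cB''$. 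This leaves the case where both pushout factors are finite-dimensional C$^*$-algebras.

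The finite-dimensional case is handled exactly as in \Cref{le.aa-fin}: \cite[Theorem 4.2]{adel} supplies residual finite-dimensionality of the full C$^*$-envelope $\overline{(\cC\otimes\cD)*_{\cD}\cB''}$, and the algebraic pushout embeds in this envelope via the left regular representation on the Hilbert $\cD$-module direct sum of alternating tensor products of $\ker(E_1)$ and $\ker(E_2)$, where $E_1\colon\cC\otimes\cD\to\cD$ and $E_2\colon\cB''\to\cD$ are the canonical trace-preserving conditional expectations. The construction is insensitive to the asymmetry between the two factors, which is the only change from \Cref{le.aa-fin}.

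The main point to verify carefully -- and where I would expect to spend most of the effort -- is the universal-property identification in the first paragraph, specifically the assertion that $\cC\otimes\cD$ embeds (rather than merely maps) into $\cC*\cB/[\cC,\cD]$. This injectivity is forced by the fact that $\cC\otimes\cD$ appears as a direct summand of the pushout in the bimodule decomposition of \cite[Corollary 8.1]{bg-diamond}, so once the two universal properties are matched up the embedding is automatic. Everything else is a routine adaptation of arguments already deployed in the proof of \Cref{pr.aa}.
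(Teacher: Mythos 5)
Your proof is correct, and its first step coincides exactly with the paper's: both begin by identifying $\cC*\cB/[\cC,\cD]$ with the pushout $(\cC\otimes\cD)*_{\cD}\cB$. After that the routes diverge. The paper embeds $(\cC\otimes\cD)*_{\cD}\cB$ into the \emph{symmetric} pushout $(\cC\otimes\cB)*_{\cD}(\cC\otimes\cB)$ (via $\cC\otimes\cD\hookrightarrow\cC\otimes\cB$ and $\cB\cong 1\otimes\cB\hookrightarrow\cC\otimes\cB$), invokes \Cref{cor.rfd-psh}, and then quotes \Cref{pr.aa} as a black box applied to the RFD algebra $\cC\otimes\cB$; this keeps the two free factors identical and so requires no new analysis. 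You instead re-run the proof of \Cref{pr.aa} for the asymmetric pushout, in effect proving the more general statement that $\cA_1*_{\cD}\cA_2$ is RFD whenever $\cA_1$ is a finite-dimensional C$^*$-algebra, $\cA_2$ is RFD, and $\cD$ is a common finite-dimensional commutative C$^*$-subalgebra. That works, and is arguably the more natural generality, but it obliges you to check two small points that the paper's detour sidesteps: (i) in the asymmetric analogue of \Cref{le.aa-fin} you need faithful tracial states on $\cC\otimes\cD$ and on $\cB''$ that \emph{restrict to the same} faithful trace on $\cD$, both to define compatible conditional expectations $E_1,E_2$ (so the Hilbert $\cD$-module inner products on the alternating tensor products agree) and to apply \cite[Theorem 4.2]{adel}; this is easy --- restrict a faithful trace of $\cB''$ to $\cD$ and take its tensor product with any faithful trace of $\cC$ --- but should be stated; (ii) the injectivity of $\cC\otimes\cD\to\cC*\cB/[\cC,\cD]$ that you flag is indeed automatic from the \cite[Corollary 8.1]{bg-diamond} decomposition, as you say. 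In short: same reduction to an amalgamated pushout, but the paper recycles \Cref{pr.aa} wholesale while you generalize its proof; your version costs a little extra bookkeeping and buys a slightly stronger intermediate statement.
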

\begin{proof}
  The algebra in question is isomorphic to the pushout
  \begin{equation*}
    (\cC\otimes \cD)*_{\cD} \cB. 
  \end{equation*}
  Since we have rightward embeddings
  \begin{equation*}
    \begin{tikzpicture}[baseline=(current  bounding  box.center),anchor=base,cross line/.style={preaction={draw=white,-,line width=6pt}}]
      \path (0,0) node (1l) {$\cD$} +(2,1) node (2l) {$\cC\otimes\cD$} +(2,-1) node (3l) {$\cB$}
      +(4,0) node (1r) {$\cD$} +(6,1) node (2r) {$\cC\otimes \cB$} +(6,-1) node (3r) {$\cC\otimes \cB$}; 
      \draw[->] (1l) to[bend left=6] (2l);
      \draw[->] (1l) to[bend right=6] (3l);
      \draw[->] (1r) to[bend left=6] (2r);
      \draw[->] (1r) to[bend right=6] (3r);
      \draw[->] (1l) to node[pos=.5,auto] {$\scriptstyle \cong$} (1r);
      \draw[->] (2l) to (2r);
      \draw[->] (3l) to (3r);      
  \end{tikzpicture}
  \end{equation*}

\Cref{cor.rfd-psh} shows that it will be enough to prove residual finite-dimensionality for
  \begin{equation*}
   (\cC\otimes \cB)*_{\cD}(\cC\otimes \cB) 
  \end{equation*}
In turn, this follows from \Cref{pr.aa} and the simple observation that tensor products of RFD $*$-algebras are RFD.
\end{proof}

\section{Topological generation and residual finiteness}\label{se.rf}

Our first goal in this section is to treat the case of quantum permutation groups.  Before getting to the main results, we first recall some basic facts about the description of the invariant theory for $S_N^+$ in terms of non-crossing partitions \cite{BaCo07}.

 \subsection{Non-crossing partition maps} \label{section:ncp}

\begin{definition}
Fix $k \in \bN$ and consider the ordered set $[k] = \{1, \ldots, k\}$.  A {\it partition} of $[k]$ is a decomposition $p$ of $[k]$ into a disjoint union of non-empty subsets, called the {\it blocks} of $p$. A partition $p$ of $[k]$ has a {\it crossing} if there exist $a < b < c <d \in [k]$ such that $\{a,c\}$ and $\{b,d\}$ belong to different blocks.  A partition $p$ is called {\it non-crossing} if it has no crossings.  The collection of all non-crossing partions of $[k]$ is denoted by $NC(k)$.  
\end{definition} 

Given a function $i:[k] \to [N]$ (i.e. a multi-index $i = (i(1), \ldots, i(k)) \in [N]^k$), we let $\ker i$ be the partition of $[k]$ given by declaring that $r,s$ belong to the same block of $\ker i$ if and only if $i(r) = i(s)$.  Given $i$ as above and $p \in NC(k)$, we define \[\delta_p(i) =
 \Bigg\{\begin{matrix} 1& \ker i \ge p \\0& \text{otherwise}
\end{matrix},\]
where $\ge$ denotes the refinement partial order on the lattice partitions of $[k]$.  

Now let $V$ be an $N$-dimensional Hilbert space with distinguished orthonormal basis $e_j$, $1\le j\le N$.
Given $k,l \in \bN_0$ and $p \in NC(k + l)$, we form the linear map 
\[
T_p^{k,l,N}: V^{\otimes k} \to V^{\otimes l}; \qquad T_p^{k,l,N}(e_{i(1)} \otimes \ldots \otimes e_{i(k)}) = \sum_{j:[l] \to [N]}
\delta_p(ij) e_{j(1)}\otimes \ldots \otimes e_{j(l)},
\] 
where $ij:[k+l] \to [N]$ is the concatenation of $i$ and $j$.  We call such linear maps {\it non-crossing partition maps}.  The fundamental result that will be of use to us here is the following description of $S_N^+$ invariants in terms of these non-crossing partition maps.

\begin{theorem}[\cite{BaCo07}]
For each $k,l \in \bN_0$, $N \in \bN$ we have linear isomorphisms
\[
\textrm{hom}_{S_N^+}(V^{\otimes k}, V^{\otimes l}) = \text{span}\{T_p^{k,l,N}: p \in NC(k,l)\}.
\]
Moreover, if $N \ge 4$, then $\{T_p^{k,l,N}: p \in NC(k,l)\}$ forms a linear basis for $\text{hom}_{S_N^+}(V^{\otimes k}, V^{\otimes l}) $.
\end{theorem}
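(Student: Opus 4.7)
The plan is to use Woronowicz's Tannaka--Krein duality together with a Gram determinant computation. Concretely, I would split the proof into three steps: (i) each $T_p^{k,l,N}$ is an $S_N^+$-intertwiner, (ii) these maps span all intertwiners, (iii) they are linearly independent when $N\ge 4$.

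For step (i), I would verify directly from the definition of the fundamental corepresentation $u=(u_{ij})$ of $S_N^+$ that $T_p u^{\otimes k} = u^{\otimes l} T_p$ on each basis vector. Expanding in coordinates, this identity reduces to a sum of products of the $u_{ij}$ indexed by multi-indices, and the magic unitary relations $u_{ij}u_{ij'} = \delta_{jj'}u_{ij}$ and $\sum_j u_{ij} = 1$ evaluate each such product block-by-block. The non-crossing hypothesis on $p$ is essential here: it ensures that in each block of $p$ the relevant product of $u_{ij}$'s can be collapsed without having to commute factors across the blocks.

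For step (ii), I would invoke Woronowicz's Tannaka--Krein theorem applied to the concrete collection $\mathcal{C}(k,l) := \mathrm{span}\{T_p^{k,l,N} : p \in NC(k,l)\}$. The categorical operations on $NC$ (vertical concatenation, horizontal concatenation, and upside-down reflection) are compatible with composition, tensor product and adjunction of the $T_p$, so $\mathcal{C}$ forms a concrete rigid monoidal $C^*$-tensor category generated by a single object of dimension $N$. Woronowicz's theorem then produces a compact matrix quantum group $\bG$ whose fundamental corepresentation has $\mathcal{C}$ as its full intertwiner category. Inspecting the elementary partitions in $NC(2)$ and $NC(3)$ (the cup, cap, and ``$\mathsf{Y}$'' shapes), one sees that the intertwining relations they impose are precisely equivalent to the fundamental corepresentation of $\bG$ being a magic unitary. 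By the universal property of $S_N^+$, there is a surjection $\cA(S_N^+)\twoheadrightarrow \cA(\bG)$; combined with step (i) this forces equality of the intertwiner spaces.

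For step (iii), I would compute the Gram matrix for the pairing $\langle T_p, T_q \rangle := \mathrm{Tr}\bigl((T_p)^* T_q\bigr)$, which evaluates to $N^{|p\vee q|}$ where $p\vee q$ is the join taken in the lattice of \emph{all} partitions of $[k+l]$. The determinant of the corresponding matrix indexed by $NC(k,l)$ has a classical factorization (due to Tutte, and used for this purpose in \cite{BaCo07}) whose smallest root in $N$ is strictly less than $4$, so the Gram matrix is nondegenerate for $N\ge 4$, yielding linear independence. The main obstacle is step (ii), since verifying that the elementary partition maps really do encode the magic-unitary relations requires a careful unwinding of the Tannaka--Krein machinery; the Gram determinant in step (iii), while technical, is a standard combinatorial computation once the relevant Möbius inversion on the non-crossing partition lattice is set up.
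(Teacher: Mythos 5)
The paper does not prove this statement; it is quoted verbatim from the cited reference \cite{BaCo07}, and your outline is essentially the standard proof found there and in the Banica--Speicher ``categories of partitions'' framework: direct verification that the magic-unitary relations make each $T_p$ an intertwiner, Tannaka--Krein reconstruction from the category generated by the fork and singleton partitions to get the reverse inclusion, and Tutte's determinant of the matrix of chromatic joins $\bigl(N^{|p\vee q|}\bigr)_{p,q}$ for independence. The only quibble is in step (iii): what you need is that \emph{all} roots of that determinant (the Beraha numbers $4\cos^2(\pi/m)$) are strictly less than $4$, not merely the smallest one, so that the Gram matrix is nondegenerate for every $N\ge 4$; with that wording fixed the argument is complete.
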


\subsection{Quantum permutation groups}\label{subse.sn} 

Let $V = \text{span}(e_i)_{i=1}^N$ be as above, regarded as the fundamental representation of $S_N^+$.  We regard $S_{N-1}^+$ as also acting on $V$ via its fundamental representation on the $(N-1)$-dimensional subspace $V_N$ spanned by $e_i$, $1\le i\le N-1$, and via the trivial representation on the orthogonal  complement $\bC e_N$.  In this way, we regard $S_{N-1}^+  < S_N^+$.  For future reference, we denote by $V_i$ the subspace of $V$ spanned by all $e_j$, $j\ne i$, and by $W_i$ the span of $e_i$ alone.

Our first main result in this section reads as follows. 

\begin{theorem}\label{th.top-gen}
  For $N\ge 5$ the quantum permutation group $S^+_N$ is topologically generated by its subgroups $S_N$ and $S_{N-1}^+$. 
\end{theorem}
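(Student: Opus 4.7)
My plan is to apply \Cref{le.alt-top-gen}, reducing the problem to showing that for every $k \ge 0$,
\[
(V^{\otimes k})^{S_N^+} \;=\; (V^{\otimes k})^{S_N} \cap (V^{\otimes k})^{S_{N-1}^+},
\]
where $V = \mathrm{span}(e_1, \dots, e_N)$ is the fundamental representation of $S_N^+$; the containment ``$\subseteq$'' is automatic, and by Peter--Weyl it suffices to check tensor powers of $V$.

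The three invariant spaces admit explicit descriptions via the machinery of \Cref{section:ncp}. On the one hand, $(V^{\otimes k})^{S_N^+}$ is spanned by the non-crossing partition vectors $\xi_p = T_p^{0,k,N}(1)$ for $p \in NC(k)$, whereas $(V^{\otimes k})^{S_N}$ is spanned by the same formula but allowing arbitrary $p \in P(k)$. I would work in the linearly independent family $\xi'_p = \sum_{\ker i = p} e_{i(1)} \otimes \cdots \otimes e_{i(k)}$, indexed by partitions $p$ with $|p| \le N$ and related to the $\xi_p$ by M\"obius inversion on the partition lattice. To bring in $S_{N-1}^+$, decompose $V = V_N \oplus \bC e_N$ with $V_N = \mathrm{span}(e_1, \dots, e_{N-1})$ carrying the fundamental representation of $S_{N-1}^+$; this induces
\[
V^{\otimes k} = \bigoplus_{T \subseteq [k]} V_T, \qquad V_T \cong V_N^{\otimes(k-|T|)} \text{ as } S_{N-1}^+\text{-modules},
\]
and since $N - 1 \ge 4$, the Banica--Collins theorem applies to each summand, so that $(V_T)^{S_{N-1}^+}$ is spanned by non-crossing partition vectors on $[k] \setminus T$.

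Given $v = \sum_p c_p \xi'_p \in (V^{\otimes k})^{S_N} \cap (V^{\otimes k})^{S_{N-1}^+}$, the key observation is that $\xi'_p$ has nonzero component in $V_T$ precisely when $T = \emptyset$ or $T$ is a single block of $p$. Imposing $S_{N-1}^+$-invariance of each $v|_T$ yields a linear system on the $c_p$. The $T = \emptyset$ constraints, combined with M\"obius inversion on the non-crossing partition lattice, force $c_p = 0$ for all crossing $p$ with $|p| \le N - 1$. The remaining coefficients (those with $|p| = N$, relevant only when $k \ge N$) are pinned down by the $T \ne \emptyset$ constraints: for $T$ a block of $p$ the component $v|_T$ restricts to a vector in $V_N^{\otimes(k-|T|)}$ whose $S_{N-1}^+$-invariance forces matching with an expansion coming from a non-crossing combination on $[k]\setminus T$. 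Assembling these relations realizes $v$ as a combination of the $\xi_q$, $q \in NC(k)$.

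The base case $N = 5$ admits a cleaner structural proof via Banica's maximality theorem \cite[Theorem 7.10]{ban-uni}: the quantum subgroup $\bG = \langle S_5, S_4^+ \rangle$ satisfies $S_5 \le \bG \le S_5^+$ and by maximality equals one of the two; noncommutativity of $\cA(S_4^+)$ ensures $S_4^+ \not\le S_5$, forcing $\bG = S_5^+$. The main obstacle in the general argument is the combinatorial bookkeeping: tracking how the restriction $p|_{[k]\setminus T}$ of a global partition interacts with the non-crossing condition, and properly organizing the system of $V_T$-constraints so that the vanishing of crossing coefficients becomes transparent. Working in the $\xi'_p$ basis (rather than the $\xi_p$) and exploiting $N - 1 \ge 4$ to have the Banica--Collins basis available on every $V_T$ is what makes the calculation go through.
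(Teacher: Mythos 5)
Your setup matches the paper's: reduce via \Cref{le.alt-top-gen} to invariants in $V^{\otimes k}$, handle $N=5$ by Banica's maximality theorem exactly as the paper does, and for larger $N$ subtract a global non-crossing invariant so that the component in $V_N^{\otimes k}$ vanishes. (Note, though, that the $T=\emptyset$ constraint does not literally ``force $c_p=0$ for all crossing $p$ with $|p|\le N-1$''; it forces the tuple $(c_p)_{|p|\le N-1}$ to agree with the coefficients of a non-crossing combination, which is what licenses the subtraction and makes \emph{all} coefficients with at most $N-1$ blocks vanish afterwards.) Up to that point you are on track and essentially parallel to \Cref{pr.coinv}.

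The gap is in the final step, which is where the real content lies: showing that the remaining part of $v$, supported on the $\xi'_p$ with $|p|=N$ (equivalently, that $f$ vanishes on the mixed summands $V_N^{\otimes(k-l)}\otimes W_N^{\otimes l}$), is zero. You propose to extract this from the $S^+_{N-1}$-invariance of the components $v|_{V_T}$ alone via M\"obius inversion, but you never exhibit the argument; ``assembling these relations realizes $v$ as a combination of the $\xi_q$'' states the desired conclusion rather than proving it. The paper closes this step with a different and quite specific device: $S_N$-invariance forces $f$ to vanish on \emph{every} $V_i^{\otimes k}$, not just $V_N^{\otimes k}$; the restriction of the $V_T$-component to $V_{1,N}^{\otimes(k-|T|)}\otimes W_N^{\otimes |T|}$ lies inside $V_1^{\otimes k}$ and hence vanishes; and since that component is a combination of non-crossing partition maps (by $S^+_{N-1}$-invariance and triviality of the action on $W_N$), linear independence of those maps on the $(N-2)$-dimensional space $V_{1,N}$ kills it. This is exactly where the hypothesis $N\ge 6$ enters ($\dim V_{1,N}=N-2\ge 4$). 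Your proposed bookkeeping invokes only $N-1\ge 4$, so if it worked it would yield a direct combinatorial proof for $N=5$ as well, something the paper conspicuously does not achieve (it must resort to the maximality of $S_5<S_5^+$ there). That asymmetry is a strong warning sign; at minimum you owe a proof that the system of $V_T$-constraints admits no nonzero solution supported on partitions with exactly $N$ blocks, and I do not see how to supply it without reintroducing the second restriction used in the paper.
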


Before embarking on the proof we record the following immediate strengthening of the statement:

\begin{corollary}\label{cor.top-gen-bis}
  Let $N\ge 5$, $4\le M\le N$ and $S^+_M<S^+_N$ the embedding of the quantum subgroup fixing $N-M$ of the standard basis vectors of the defining representation of $S^+_N$ on $\bC^N$. Then, $S^+_N$ is topologically generated by $S_N$ and $S^+_M$. 
\end{corollary}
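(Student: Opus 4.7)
The plan is a downward induction on $M$ starting from $M=N$, where the base case is trivial and each inductive step invokes \Cref{th.top-gen} applied internally at the appropriate intermediate level.

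First I would fix consistent embeddings: realize $S_{M+1}^+ < S_N^+$ as the quantum subgroup fixing the basis vectors $e_{M+2},\dots,e_N$, so that $S_M^+ < S_{M+1}^+$ corresponds to further fixing $e_{M+1}$, while $S_{M+1} < S_N$ is simultaneously the classical subgroup permuting $e_1,\dots,e_{M+1}$ and fixing $e_{M+2},\dots,e_N$. With this setup, $S_{M+1}$ is contained both in $S_N$ and in $S_{M+1}^+$, and all the inclusions match those appearing in the hypothesis of \Cref{th.top-gen} applied to $S_{M+1}^+$.

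For the inductive step, assume the corollary has been proved for $M+1$ (that is, $S_N$ and $S_{M+1}^+$ topologically generate $S_N^+$) and take $M$ with $4\le M\le N-1$. By \Cref{le.alt-top-gen} it suffices, given any $S_N^+$-representation $V$ and any linear $f\colon V\to\bC$ that is a morphism over $S_N$ and over $S_M^+$, to show that $f$ is a morphism over $S_N^+$. Since $S_{M+1}<S_N$, the map $f$ is automatically a morphism over $S_{M+1}$, and it is a morphism over $S_M^+$ by hypothesis. Regarding $V$ as an $S_{M+1}^+$-representation by restriction, \Cref{th.top-gen} applied at level $M+1$ (valid because $M+1\ge 5$, which is exactly the lower bound $M\ge 4$ in the corollary) yields that $f$ is a morphism over $S_{M+1}^+$. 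Combining this with $f$ being a morphism over $S_N$ and invoking the inductive hypothesis at $M+1$, one concludes that $f$ is a morphism over $S_N^+$, closing the induction.

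I do not anticipate a genuine obstacle: the argument is essentially a telescoping iteration of \Cref{th.top-gen}, with the only care needed being to line up the various copies of $S_{M+1}^+$, $S_M^+$, and $S_{M+1}$ inside $S_N^+$ compatibly. The bound $M\ge 4$ in the statement is precisely what guarantees $M+1\ge 5$ at every stage, so that \Cref{th.top-gen} can be invoked uniformly throughout the induction.
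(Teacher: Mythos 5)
Your proof is correct and is essentially the paper's argument: the authors likewise obtain the corollary by iterating \Cref{th.top-gen} (at each level $S^+_{M+1}=\langle S_{M+1},S^+_M\rangle$ with $S_{M+1}<S_N$) and inducting, exactly the telescoping you describe. Your write-up merely makes the induction direction and the compatibility of the embeddings explicit.
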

\begin{proof}
This is a repeated application of \Cref{th.top-gen}: $S_M < S_N$ and $S^+_M$ topologically generate $S^+_{M+1}$ and the result follows by induction.  
\end{proof}

The cases $N=5$ and $N\ge 6$ will be treated differently. The latter requires \Cref{pr.coinv} below, where $V$ is the $N$-dimensional Hilbert space with distinguished orthonormal basis $e_j$, $1\le j\le N$ carrying the defining representation of $S^+_N$.

\begin{proposition}\label{pr.coinv}
  Suppose $N\ge 6$. For any $k\ge 0$, any linear map $f:V^{\otimes k}\to \bC$ that is invariant under both $S_{N-1}^+$ and the classical permutation group $S_N$ is invariant under all of $S_N^+$. 
\end{proposition}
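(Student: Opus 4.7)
My plan is to expand any $S_N$-invariant functional $f:V^{\otimes k}\to\bC$ in the $S_N$-invariant basis $\{S_\pi\}$, where $S_\pi(e_{i(1)}\otimes\cdots\otimes e_{i(k)}) = [\ker i = \pi]$, obtaining $f = \sum_{\pi\in P(k),\,|\pi|\le N}\alpha_\pi S_\pi$, and to exploit $S_{N-1}^+$-invariance to force $\alpha_\pi = \sum_{q\preceq\pi,\,q\in NC(k)}\gamma_q$ for some scalars $\gamma_q$ indexed by non-crossing partitions. This representation rewrites $f$ as $\sum_q\gamma_q T_q^{k,0,N}$, exhibiting $f$ as $S_N^+$-invariant by Banica--Collins (applicable since $N\ge 4$). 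Here $\preceq$ denotes the partition refinement order (finer partitions are smaller).

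The decomposition $V = V_N\oplus\bC e_N$ of the defining representation into $S_{N-1}^+$-subrepresentations (with $V_N$ carrying the defining action of $S_{N-1}^+$) induces a stratification $V^{\otimes k} = \bigoplus_{A\subseteq[k]}V_A$, where $V_A$ is the span of basis vectors with $e_N$ in exactly the positions $A^c$; as $S_{N-1}^+$-representations, $V_A\cong V_N^{\otimes|A|}$. Since $N-1\ge 5\ge 4$, Banica--Collins applied to $S_{N-1}^+$ on each $V_N^{\otimes|A|}$ gives a unique expansion $f|_{V_A} = \sum_{p\in NC(A)}\eta^{A^c}_p T_p^{|A|,0,N-1}$. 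Matching this with the $S_\pi$-expansion produces
\[
\alpha_{\rho\cup\{B\}} = \sum_{p\preceq\rho,\,p\in NC([k]\setminus B)}\eta^B_p, \qquad B:=A^c,\ \rho\in P([k]\setminus B),\ |\rho|\le N-1.
\]
The case $B=\emptyset$ (i.e.\ $A=[k]$) is precisely the desired formula $\alpha_\pi = \sum_{q\preceq\pi,\,q\in NC(k)}\gamma_q$ for $|\pi|\le N-1$, with $\gamma_q := \eta^\emptyset_q$. Since $S_\pi\equiv 0$ on $V^{\otimes k}$ for $|\pi|>N$, only partitions $\pi$ of size exactly $|\pi|=N$ remain to be handled.

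For such a $\pi$, choose a block $B$ of $\pi$ (preferably one making $|A|=|[k]\setminus B|$ as small as possible, e.g.\ the largest block, whose size is at least $k-N+1$ when $k\le N+1$; larger $k$ can be handled iteratively) and write $\pi = \rho\cup\{B\}$ with $|\rho|=N-1$. The natural candidate
\[
\widetilde\eta^B_p := \sum_{\substack{q\in NC(k):\,B\text{ is a union of }q\text{-blocks},\\ q|_{[k]\setminus B}=p}}\gamma_q
\]
satisfies the same relation as $\eta^B$ on the overlap $|\rho|\le N-2$, thanks to the combinatorial equivalence ``$q\preceq\rho\cup\{B\}$ iff $B$ is a union of $q$-blocks and $q|_{[k]\setminus B}\preceq\rho$.'' The step I expect to be the main obstacle is upgrading this overlap agreement to $\widetilde\eta^B = \eta^B$; this is equivalent to injectivity of the restricted zeta transform $\bC^{NC([k]\setminus B)}\to\bC^{\{\rho\in P([k]\setminus B):\,|\rho|\le N-2\}}$. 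I would verify this injectivity via Möbius inversion on the non-crossing partition lattice: any potential kernel element is controlled by the scalar $\gamma_{\mathrm{sing}([k]\setminus B)}$, taking the form $q\mapsto c\cdot\mu_{NC}(\mathrm{sing},q)$, and this scalar is annihilated by evaluating the zeta transform on a suitably chosen crossing partition of $[k]\setminus B$ with at most $N-2$ blocks (for instance $\{\{a,c\},\{b,d\}\}$ extended by singletons, giving a non-zero Möbius sum). The hypothesis $N\ge 6$ enters here in two crucial ways: it guarantees $N-1\ge 5\ge 4$ (so Banica--Collins applies to $V_N^{\otimes|A|}$), and it ensures the bound $N-2\ge 4$ allowing a crossing test partition with few enough blocks to sit inside the constrained range. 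Once the injectivity is established, $\widetilde\eta^B=\eta^B$ extends $\alpha_\pi = \sum_{q\preceq\pi,\,q\in NC(k)}\gamma_q$ to all $|\pi|\le N$, completing the proof.
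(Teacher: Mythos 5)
Your overall strategy is correct and, once unwound, is essentially the paper's own argument in coefficient form: both proofs decompose $V^{\otimes k}$ according to the positions carrying $e_N$, apply the Banica--Collins description of $S_{N-1}^+$-invariants on each stratum, and then use the $S_N$-action together with linear independence of non-crossing partition maps \emph{in dimension $N-2$} to propagate information between strata. The paper packages this more economically: it first subtracts a global linear combination of $T_p^{k,0,N}$ so that $f$ vanishes on $V_N^{\otimes k}$ (hence, by $S_N$-invariance, on every $V_i^{\otimes k}$), and then kills $f$ on each stratum $V_N^{\otimes(k-l)}\otimes W_N^{\otimes l}$ by restricting the resulting linear combination of non-crossing functionals to $V_{1,N}^{\otimes(k-l)}$, where $\dim V_{1,N}=N-2\ge 4$ forces all coefficients to vanish. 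Your version instead tracks the coefficients $\alpha_\pi$, $\gamma_q$, $\eta^B_p$ explicitly and isolates the same dimension-$(N-2)$ phenomenon as injectivity of the zeta transform $\bC^{NC([k]\setminus B)}\to\bC^{\{\rho:|\rho|\le N-2\}}$. Neither buys more generality; yours is longer but makes the extension problem (from $|\pi|\le N-1$ to $|\pi|=N$) very explicit.

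One step of your writeup is not right as stated, though it is immediately repairable. Your proposed verification of the injectivity via M\"obius inversion --- asserting that a kernel element must be of the form $q\mapsto c\cdot\mu_{NC}(\mathrm{sing},q)$ and can be killed by testing against one crossing partition --- does not describe the kernel correctly in general (for small cutoffs the kernel is a large subspace, not a line). You do not need any of this: your injectivity statement is \emph{exactly equivalent} to the linear independence of $\{T_p^{m,0,N-2}\}_{p\in NC(m)}$ on $(\bC^{N-2})^{\otimes m}$, since the matrix of the restricted zeta transform in the bases $\{\delta_p\}$ and $\{S_\rho\}_{|\rho|\le N-2}$ is precisely the matrix expressing the $T_p$ in terms of the (manifestly independent, disjointly supported) $S_\rho$. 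That independence is the quoted Banica--Collins theorem applied in dimension $N-2\ge 4$, i.e.\ exactly the hypothesis $N\ge 6$ --- the same place the paper uses $V_{1,N}$. With that substitution your proof closes; the ``choose the largest block / iterate'' caveat is unnecessary, as the argument works for any block $B$ of a partition with $N$ blocks.
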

\begin{proof}
In other words, we have to show a functional $f:V^{\otimes k}\to \bC$ that is  both an $S_{N-1}^+$-coinvariant and an $S_N$-coinvariant must also be an $S_N^+$-coinvariant.  

Note that the $S_N$-invariance ensures that $f$ respects the action of {\it each one} among the $N-1$ choices of quantum subgroup $S_{N-1}^+< S_N^+$ obtained by acting on the subspaces $V_i \subset V$. 
Moreover, the invariance under the original copy of $S_{N-1}^+< S_N^+$ ensures that when restricted to $V_N^{\otimes k}$, $f$ acts as a linear combination of the non-crossing partition maps $\{T^{k,0,N-1}_p\}_{p \in NC(k)}$. Replacing the maps $\{T^{k,0,N-1}_p\}_{p \in NC(k)}$  in this linear combination with $\{T^{k,0,N}_p\}_{p \in NC(k)}$ (which belong to $\textrm{hom}_{S_N^+}(V^{\otimes k}, \bC)$) and subtracting this new linear combination from $f$, we may as well assume that $f|_{V_N^{\otimes k}}$ vanishes and then try to prove that $f$ itself is zero. 

Once more, the $S_N$-invariance ensures that the restriction of $f$ to $V_i^{\otimes k}\subset V^{\otimes k}$ vanishes for every $1\le i\le N$. What we have to show, however, is that it also vanishes on summands of $V^{\otimes k}$ obtained by tensoring some copies of $V_N$ with some copies of $W_N$. To simplify notation and fix ideas, we will show that the restriction of $f$ to, say, 
\begin{equation*}
U=V_N^{\otimes(k-l)}\otimes W_N^{\otimes l} \subset V  
\end{equation*}
vanishes. The general case is perfectly analogous, with only notational difficulties making the presentation more cumbersome. 

As the action of the original copy of $S_{N-1}^+$ that we considered is trivial on $W_N$, $U$ can be identified with the $(k-l)^{\text{th}}$ tensor power of the fundamental representation of $S_{N-1}$, and hence any $S_{N-1}^+$-coinvariant $U\to\bC$ will be some linear combination of non-crossing partitions maps associated to $NC(k-l)$. 

Now let $V_{1,N}\subset V_N$ be the span of $e_j$, $j\ne 1,N$. Because $N\ge 6$, $V_{1,N}$ is at least $4$-dimensional and hence the linear forms associated to non-crossing partitions are linearly independent on it. This means that if a linear combination of non-crossing partition functionals on $V_{1,N}^{\otimes (k-l)}\to \bC$ vanishes, then the linear combination itself must be trivial. But note now that $f$ restricted to $V_{1,N}^{\otimes(k-l)}\otimes W_N^{\otimes l}$ vanishes, because the space in question is a subspace of $V_1^{\otimes k}$. By the paragraph above, $f$ must therefore vanish on all of $U$.  
\end{proof}

\pf{th.top-gen}
\begin{th.top-gen}
  As mentioned above, we treat the cases $N=5$ and $N>5$ separately.

  {\bf (Case 1: $N\ge 6$)} Recall e.g. from \cite[Theorem 4.1]{ban-sym} that every finite-dimensional $S^+_N$-representation appears as a summand of $V^{\otimes k}$ where $V$ is the $N$-dimensional defining representation and $k$ is some positive integer. By \Cref{le.alt-top-gen} the conclusion is now a paraphrase of \Cref{pr.coinv}. 

  {\bf (Case 2:  $N=5$)} According to \cite[Theorem 7.10]{ban-uni} the inclusion $S_5< S^+_5$ admits no intermediate quantum groups. Since $S^+_4< S^+_5$ is not a quantum subgroup of $S_5$, we indeed have $S^+_5=\langle S_5,S^+_4\rangle$.
\end{th.top-gen}

As a consequence of the above we have

\begin{theorem}\label{th.rf}
  The discrete duals $\widehat{S^+_N}$ of the free quantum permutation groups are residually finite. 
\end{theorem}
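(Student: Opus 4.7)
The plan is to prove this by induction on $N$, with the inductive step supplied by \Cref{th.top-gen} combined with the general criterion \Cref{le.gen-rf}.

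First I would dispose of the trivial small cases. For $N\le 3$ the magic unitary relations in \Cref{def.sn} force the generators to commute (a classical observation going back to \cite{Wan98}), so $\cA(S^+_N) = \cA(S_N) = \bC[S_N]$ and $\widehat{S^+_N}$ is a finite classical group, which is trivially residually finite. The base case $N=4$ requires a separate input, since \Cref{th.top-gen} is only available starting at $N\ge 5$: here I would invoke the known residual finite-dimensionality of $\cA(S^+_4)$ established in prior work (e.g.\ via the identification of $S^+_4$ with a cocycle twist of $SO(3)$, or via the explicit finite-dimensional matrix models for $S^+_4$ in the literature).

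For the inductive step, fix $N\ge 5$ and suppose $\widehat{S^+_{N-1}}$ is residually finite. The classical symmetric group $S_N$ is finite, so $\widehat{S_N}$ is obviously residually finite as well. By \Cref{th.top-gen}, $S^+_N$ is topologically generated by its quantum subgroups $S_N$ and $S^+_{N-1}$, both of which have residually finite discrete duals. Applying \Cref{le.gen-rf} then yields the residual finiteness of $\widehat{S^+_N}$, completing the induction.

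The main obstacle was never in this final deduction---which is essentially a two-line argument once the machinery is in place---but rather entirely absorbed into \Cref{th.top-gen}, whose $N=5$ instance rested on Banica's deep maximality result for the inclusion $S_5 < S^+_5$ from \cite{ban-uni}, and into the base case $N=4$ which rests on earlier residual finite-dimensionality results. With those two ingredients at hand, residual finiteness propagates up the tower $S^+_4 < S^+_5 < S^+_6 < \cdots$ automatically.
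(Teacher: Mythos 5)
Your proof is correct and follows essentially the same route as the paper: induction up the tower via \Cref{le.gen-rf} and \Cref{th.top-gen}, with the base case $\widehat{S^+_4}$ handled by the known finite-dimensional matrix models for $\cA(S^+_4)$ (the paper cites the embedding of $\cA(S^+_4)$ into $C(SU_2,M_4)$ from the Pauli-matrix model). The only cosmetic difference is that the paper does not bother to discuss $N\le 3$ separately, since the induction starts at $N=4$ anyway.
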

\begin{proof}
  By \Cref{le.gen-rf,th.top-gen} we can proceed inductively once we know that
  \begin{itemize}
  \item $\widehat{S_N}$ is residually finite (the group algebra is finite-dimensional);
  \item $\widehat{S^+_4}$ is residually finite.    
  \end{itemize}
  For the latter, recall from \cite[Definition 2.1 and Theorem 4.1]{bc-pauli} that $\cA(S^+_4)$ embeds into the C$^*$-algebra $C(SU_2, M_4)$, and hence has enough $4$-dimensional representations.
\end{proof}

Since, as observed in Remark \ref{rem.RFD}, hyperlinearity and the Kirchberg factorization property are weaker than residual finiteness, we also have

\begin{corollary}\label{cor.hyp}
  The discrete duals $\widehat{S^+_N}$ have the Kirchberg factorization property and are hyperlinear.
  \qedhere
\end{corollary}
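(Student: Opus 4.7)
The corollary is essentially a compilation step: all the real work has already been done in \Cref{th.rf}. My plan is simply to cite \Cref{th.rf} and chase through the implications recorded in \Cref{rem.RFD}.

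More precisely, \Cref{th.rf} asserts that each $\widehat{S^+_N}$ is residually finite in the sense of \Cref{def.fin}, i.e.\ $\bC\widehat{S^+_N}=\cA(S^+_N)$ embeds as a $*$-algebra into a product of matrix algebras. By \Cref{rem.RFD} (which appeals to \cite[Theorem 2.1]{bbcw}) this gives the Kirchberg factorization property of $\widehat{S^+_N}$. The same remark observes that Kirchberg factorization implies hyperlinearity; here we are using that $S^+_N$ is of Kac type (its Haar state is tracial, since $S^+_N$ is a compact matrix quantum group with magic unitary as fundamental representation, which is self-conjugate up to the standard coordinatewise involution). Hence both properties follow.

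The only thing that needs the briefest verification is the Kac-type hypothesis needed to talk about hyperlinearity in \Cref{def.fin}; this is standard (see for example the discussion in \cite{bbcw} or the original reference \cite{Wan98}), so I would simply invoke it. There is no genuine obstacle: the corollary is a one-line deduction from \Cref{th.rf} together with \Cref{rem.RFD}, and the write-up should be no more than that.
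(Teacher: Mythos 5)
Your proposal is correct and is exactly the paper's argument: the corollary is stated as an immediate consequence of \Cref{th.rf} together with the implications recorded in \Cref{rem.RFD} (residual finiteness $\Rightarrow$ Kirchberg factorization $\Rightarrow$ hyperlinearity), and the paper offers no further proof. Your extra sentence checking the Kac-type hypothesis is a harmless (and reasonable) addition, but nothing more is needed.
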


It will be of some interest to have alternative topological generation results which we now state and prove.
Let $4\le M\le N$ be a pair of positive integers, and write $N=M+T$. We then have Hopf $\ast$-algebra surjections
\begin{equation*}
  \cA(S^+_N)\to \cA(S^+_M)*\cA(S^+_T)
\end{equation*}
that annihilate $u_{ij}$ for $i,j$ in distinct parts of any partition of $[N]$ into two parts of sizes $M$ and $T$. We will be somewhat vague on which partitions to use; sometimes we need to refer to arbitrary ones, but when we do not, the reader can simply assume the partition is
\begin{equation}\label{eq:part}
  [N] = [M] \sqcup \{M+1,\cdots,M+T\}, 
\end{equation}
corresponding to the upper left-hand corner embedding
\begin{equation*}
  S_M^+ < S_N^+
\end{equation*}
corresponding to action of $S_M^+$ on $V = \text{span}\{e_1, \ldots, e_N\}$ which fixes $e_{M+1}, \ldots, e_{M+T}$.

We now come to a critical notion.

\begin{definition}\label{def.large}
  An compact quantum group embedding $\bG< S^+_N$ is {\it $(M,N)$-large} if it factors the upper path in the diagram
  \begin{equation}\label{eq:vee}
    \begin{tikzpicture}[baseline=(current  bounding  box.center),anchor=base,cross line/.style={preaction={draw=white,-,line width=6pt}}]
    \path (0,0) node (1) {$\cA(S^+_N)$} +(4,-.5) node (2) {$\cA(S^+_M)*\cA(S^+_T)$} +(7,-.5) node (3) {$\cA(\bG)$} +(0,-1) node (1bis) {$\cA(S^+_M)$}; 
    \draw[->] (1) to [bend right=6] (2);
    \draw[->] (2) -- (3);    
    \draw[->] (1bis) to[bend left=6] (2);
  \end{tikzpicture}
  \end{equation}
so as to make its lower path one-to-one. 
\end{definition}

\begin{example}
  The obvious examples of $(M,N)$-large embeddings are those corresponding to the standard surjections $\cA(S^+_N)\to \cA(S^+_M)$. Slightly less obvious examples can be obtained as follows: Suppose $N=KM$ for some positive integer $K$.   The {\it diagonal embedding} $S^+_M\le S^+_N$ is obtained at the level of Hopf algebras as the surjection
  \begin{equation*}
    \cA(S^+_N)\to \cA(S^+_M)^{*K}\to \cA(S^+_M)
  \end{equation*}
where the left hand arrow annihilates the generators $u_{ij}$ that are off the diagonal consisting of $M\times M$ blocks and the right hand arrow is the identity on each free factor. Diagonal embeddings are $(M,N)$-large in the sense of \Cref{def.large}. 
\end{example}

The embedding of Hopf $\ast$-algebras $\cA(S^+_M)\to \cA(\bG)$ forming the lower half of \Cref{eq:vee} corresponds to a morphism of quantum groups $\bG\to S^+_M$, which in turn gives rise to restriction and induction functors
\begin{equation}\label{eq:frob}
  \begin{tikzpicture}[baseline=(current  bounding  box.center),anchor=base,cross line/.style={preaction={draw=white,-,line width=6pt}}]
    \path (0,0) node (1) {$\mathrm{Rep}(S^+_M)$} +(4,0) node (2) {$\mathrm{Rep}(\bG)$}; 
    \draw[->] (1) to [bend left=6] node[pos=.5,auto] {$\scriptstyle \mathrm{res}$} (2);
    \draw[->] (2) to [bend left=6] node[pos=.5,auto] {$\scriptstyle \mathrm{ind}$} (1);
  \end{tikzpicture}
\end{equation}
with restriction being the left adjoint to induction. These remarks will recur below.

\begin{proposition}\label{pr.gen-diag}
Let $5\le M\le N$ and $\bG< S^+_N$ an $(M,N)$-large embedding. Then, $S^+_N$ is topologically generated by $\bG$ and $S_N$.   
\end{proposition}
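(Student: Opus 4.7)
The plan is to mimic the argument of \Cref{pr.coinv} (which handles the special case $\bG = S_{N-1}^+$) in the present $(M,N)$-large setting. By \Cref{le.alt-top-gen} it suffices to show that any $f \colon V^{\otimes k} \to \bC$ which is both $\bG$- and $S_N$-invariant is in fact $S_N^+$-invariant, where $V$ is the defining representation of $S_N^+$. Write $V_{[M]} := \mathrm{span}(e_1,\ldots,e_M)$ and $V_{[M+1,N]} := \mathrm{span}(e_{M+1},\ldots,e_N)$.

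First, the $(M,N)$-large condition provides the Hopf $*$-algebra embedding $\cA(S_M^+) \hookrightarrow \cA(\bG)$ via which the $\bG$-coaction on $V_{[M]}$ factors through the defining $S_M^+$-coaction. Hence $f|_{V_{[M]}^{\otimes k}}$ is $S_M^+$-invariant, and by Banica--Collins (valid since $M \ge 5 \ge 4$) equals $\sum_{p \in NC(k),\,|p|\le M} c_p T_p^{k,0,M}$ for some scalars $c_p$. Subtracting the $S_N^+$-invariant functional $\tilde f := \sum_p c_p T_p^{k,0,N}$, whose restriction to $V_{[M]}^{\otimes k}$ matches that of $f$, reduces us to the case $f|_{V_{[M]}^{\otimes k}} = 0$. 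Next, for any $\sigma \in S_N$ the conjugate $\sigma\bG\sigma^{-1}$ is $(M,N)$-large for the partition $\sigma([M])\sqcup\sigma([M+1,N])$, and $f$ is invariant under it (since $f\circ\sigma = f$); applying the previous step to each conjugate yields $f|_{V_A^{\otimes k}} = 0$ for every $M$-subset $A \subset [N]$, i.e. $f(e_i) = 0$ whenever $|\mathrm{image}(i)| \le M$.

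The crux is then eliminating contributions from multi-indices with $|\mathrm{image}(i)| > M$. Following the template of \Cref{pr.coinv}, one studies the mixed subcorepresentations $U_\ell := V_{[M]}^{\otimes(k-\ell)} \otimes V_{[M+1,N]}^{\otimes\ell}$ for $\ell \ge 1$ and their $S_N$-conjugates. Invoking the Frobenius reciprocity of \Cref{eq:frob} for the quotient $\bG \twoheadrightarrow S_M^+$ (and for the analogous quotient $\bG \twoheadrightarrow \bT^+$, where $\bT^+ \le S_T^+$ is the image of the $\cA(S_T^+)$-factor inside $\cA(\bG)$) constrains $f|_{U_\ell}$ to a linear combination of products of non-crossing partition functionals on each side of the split. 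Then, by intersecting with $V_A^{\otimes k}$ for $M$-subsets $A$ chosen to cross the $[M]$-$[M+1,N]$ split---e.g.\ $A = [M-\ell'] \cup [M+1,M+\ell']$, so that $V_{[M-\ell']}^{\otimes(k-\ell)} \otimes V_{[M+1,M+\ell']}^{\otimes\ell} \subset V_A^{\otimes k}$, where $f$ already vanishes---and invoking the linear independence of non-crossing partition functionals in dimension $\ge 4$, one forces all the coefficients of $f|_{U_\ell}$ to vanish, hence $f|_{U_\ell} = 0$.

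The principal technical obstacle is the dimensional bookkeeping in this last step: linear independence on both sides of the split demands $M - \ell' \ge 4$ and $\ell' \ge 4$, which is comfortable only once $M \ge 8$ and is tight for $M$ near the lower bound $5$. For small $M$ one likely has to cover the range of non-crossing partitions via several choices of $A$, combine with an induction on $k$, or treat degenerate values of $\ell'$ separately (potentially with help from \cite[Theorem~7.10]{ban-uni} in the same spirit as the case $N=5$ of \Cref{th.top-gen}). A secondary subtlety is that, for general $(M,N)$-large $\bG$, the quotient $\bT^+$ can be strictly smaller than $S_T^+$, enlarging the space of $\bG$-invariants on the mixed tensors compared to the free-product extremal case $\bG = S_M^+ * S_T^+$; handling this requires carefully tracking \emph{both} Frobenius reciprocities in tandem.
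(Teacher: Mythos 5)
Your setup and first reduction (subtracting a linear combination of non-crossing partition functionals to arrange $f|_{V_{[M]}^{\otimes k}}=0$, then using $S_N$-conjugation to kill $f$ on $V_A^{\otimes k}$ for every $M$-subset $A$) match the paper's proof. But the step you yourself call the crux --- eliminating the mixed tensor factors --- is where the argument breaks down, and the obstacles you flag at the end are not minor bookkeeping: they are fatal to the route you propose. First, the claim that $f|_{U_\ell}$ is ``a linear combination of products of non-crossing partition functionals on each side of the split'' is unjustified. The $(M,N)$-large hypothesis gives you no control over the image $\bT^+$ of $\cA(S_T^+)$ in $\cA(\bG)$ (it could be trivial), so the space of $\bG$-coinvariants of $V_{[M+1,N]}^{\otimes \ell}$ need not be spanned by partition maps at all; and even when $\bG=S_M^+*S_T^+$, invariants of a mixed tensor product under a free product are not simply products of invariants of the factors. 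Second, the linear-independence argument you then want to run needs dimension at least $4$ on \emph{both} sides of the split, which, as you note, is impossible for $M=5,6,7$ --- exactly the range the proposition must cover to feed the paper's applications.

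The paper's proof avoids both problems with one maneuver that is missing from your proposal: since $V_2:=V_{[M+1,N]}$ is self-dual over $\bG$, a $\bG$-coinvariant $V_1^{\otimes(k-t)}\otimes V_2^{\otimes t}\to\bC$ is the same thing as a $\bG$-morphism $V_1^{\otimes(k-t)}\to V_2^{\otimes t}$, and Frobenius reciprocity \Cref{eq:frob} for the single quotient $\bG\to S_M^+$ identifies these with $S_M^+$-morphisms $V_1^{\otimes(k-t)}\to \mathrm{ind}\,V_2^{\otimes t}$. Embedding the finite-dimensional $S_M^+$-representation $\mathrm{ind}\,V_2^{\otimes t}$ into some $V_1^{\otimes s}$ shows these morphisms are spanned by non-crossing partition maps of $[s+k-t]$, so the entire linear-independence burden sits on the $V_1$ side, where one only needs $\dim V_1'=M-1\ge 4$ (hence $M\ge 5$ suffices, with no constraint on the $V_2$ side and no knowledge of $\bT^+$). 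This frees up the single basis vector $e_M$, which is then swapped with $e_j\in V_2$ by a permutation in $S_N$ to invoke the inductive hypothesis on summands with fewer $V_2$-factors. Without this self-duality/induction-functor step, your induction over $\ell$ does not close, so the proposal as written has a genuine gap.
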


\begin{proof}
Throughout the proof we will fix a basis $\{e_i\}$ for the $N$-dimensional carrier space $V$ of the defining representation of $S^+_N$, and assume that the upper left hand arrow in \Cref{eq:vee} is the standard one corresponding to the partition \Cref{eq:part}. The proof will be very similar to the that of \Cref{pr.coinv}: we fix a map $f:V^{\otimes k}\to \bC$ that is a morphism both over $S_N<S^+_N$ and $\bG$ and seek to show that $f$ is also an $S^+_N$-morphism.    

Decompose $V=V_1\oplus V_2$ where
\begin{equation*}
  V_1=\mathrm{span}\{e_1,\cdots,e_M\},\quad V_2=\mathrm{span}\{e_{M+1},\cdots,e_{M+T}\}. 
\end{equation*}
The tensor power $V^{\otimes k}$ then decomposes as
\begin{equation*}
  \bigoplus_{{\bf i}} V_{i_1}\otimes\cdots \otimes V_{i_k}
\end{equation*}
with summands ranging over all tuples ${\bf i}=(i_j)$, $i_j\in \{1,2\}$. The summand $V_1^{\otimes k}$ is a comodule over $\cA(S^+_M)\subset \cA(\bG)$ (the embedding being the lower path in \Cref{eq:vee}), and hence $f|_{V_1^{\otimes k}}$ is a linear combination of non-crossing partitions. Subtracting the same linear combination of non-crossing partitions on $V^{\otimes k}$, we can assume $f|_{V_1^{\otimes k}}=0$. The goal now is to show that in fact $f=0$ globally (i.e. on the entirety of $V^{\otimes k}$).   We do this iteratively, proving by the following $t$-dependent claim by induction on $t$:

{\bf Claim($t$):} The restriction of $f$ to any summand of the type
\begin{equation}\label{eq:vis}
  V_{i_1}\otimes \cdots \otimes V_{i_k},\ t\text{ of the }i_j\text{ are }2.  
\end{equation}
is zero.

The base case $t=0$ of the induction is in place (claiming simply that $f|_{V_1^{\otimes k}}=0$, which we know). We now turn to the induction step, assuming Claim($s$) for all $s\le t-1$ and seeking to prove Claim($t$). In order to lessen the notational load of the argument we will focus on $V_1^{\otimes (k-t)}\otimes V_2^{\otimes t}$ (i.e. we assume the {\it last} $t$ indices in \Cref{eq:vis} are $2$).

Equivalently, we have to show that $f$ is zero on
\begin{equation*}
  V_1^{\otimes (k-t)}\otimes \bC e_j\otimes V_2^{\otimes (t-1)} 
\end{equation*}
for any $j\in [M+1,M+T]$. To do this, first note that because $V_2$ is self-dual over $S^+_M* S^+_T$, hence also over its quantum subgroup $\bG$, the $\bG$-morphisms $V_1^{\otimes (k-t)}\otimes V_2^{\otimes t}\to \bC$ are in bijection with the $\bG$-morphisms
\begin{equation*}
  V_1^{\otimes (k-t)}\to V_2^{\otimes t}.
\end{equation*}
Since moreover $V_1$ is the induction of the corresponding $S^+_M$-representation, the adjunction \Cref{eq:frob} reads
\begin{equation}\label{eq:frob2}
  \mathrm{hom}_{\bG}\left(V_1^{\otimes (k-t)},V_2^{\otimes t}\right)\cong \mathrm{hom}_{S^+_M}\left(V_1^{\otimes (k-t)},\mathrm{ind} V_2^{\otimes t}\right)
\end{equation}
Now, the finite-dimensional $S^+_M$-representation $\mathrm{ind} V_2^{\otimes t}$ embeds into some tensor power $V_1^{\otimes s}$, and hence elements in the right hand side of \Cref{eq:frob2} are spanned by non-crossing partitions of $[s+k-t]$. Since non-crossing partitions are linearly independent on spaces of dimension $\ge 4$, morphisms in \Cref{eq:frob2} vanish if they do so when restricted to
\begin{equation*}
  (V_1')^{\otimes (k-t)},\ V_1' = \mathrm{span}\{e_1,\cdots,e_{M-1}\}. 
\end{equation*}
In our setting, what this means is that it is enough to prove that $f$ vanishes on
\begin{equation}\label{eq:otimes-target}
  (V_1')^{\otimes(k-t)}\otimes \bC e_j\otimes V_2^{\otimes (t-1)}. 
\end{equation}

Set
\begin{equation*}
  V_2' = \mathrm{span}\{e_\ell\ |\ \ell\in [M+1,M+T]-\{j\}\}. 
\end{equation*}
Then, $(V_1')^{\otimes(k-t)}\otimes \bC e_j\otimes (V_2')^{\otimes (t-1)}$ is contained in the image of $V_1^{\otimes (k-t+1)}\otimes V_2^{\otimes (t-1)}$ through a the permutation of the $e_i$ interchanging $e_M$ and $e_j$. Since $f$ is an $S_N$-morphism, Claim($t-1$) now implies that $f$ vanishes on
\begin{equation*}
(V_1')^{\otimes(k-t)}\otimes \bC e_j\otimes (V_2')^{\otimes (t-1)}.
\end{equation*}
Similarly, $f$ vanishes on the other summands of \Cref{eq:otimes-target} resulting from the decomposition $V_2=V'_2\oplus \bC e_j$ by the other instances Claim($s$), $s\le t-2$ and this concludes the proof.
\end{proof}

\subsection{Quantum reflection groups}\label{subse.hyp}

We now turn to the quantum reflection groups $H_N^{s+}$ for $1\le s\le\infty$. The main result of the subsection is

\begin{theorem}\label{th.hns}
  For $N\ge 4$ and $1\le s\le \infty$ the dual $\widehat{H_N^{s+}}$ is residually finite and hence also hyperlinear. 
\end{theorem}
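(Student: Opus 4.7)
The plan is to split into two cases: $s = \infty$ (handled by reduction to finite $s$) and $s < \infty$ (handled by an iterated pushout construction built on top of $\cA(S^+_N)$).

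For $s = \infty$, I would invoke \Cref{le.hn-top-gen}, which displays $H^{\infty+}_N$ as topologically generated by its quantum subgroups $H^{s+}_N$ with $s$ finite. Once residual finiteness is established for all $\widehat{H^{s+}_N}$ with $s < \infty$, \Cref{le.gen-rf} propagates the property to $\widehat{H^{\infty+}_N}$; so it suffices to address the finite-$s$ case.

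For finite $s$, the main structural fact I would use is the free wreath product description $H^{s+}_N\cong \bZ_s\wr_* S^+_N$. Writing $p_{ij} := u_{ij}u_{ij}^* = u_{ij}^s$, the unitarity of $u$ gives $\sum_j p_{ij} = 1$, and since projections summing to $1$ in a C$^\ast$-algebra are automatically pairwise orthogonal, for each fixed $i$ the subalgebra $\cD_i \subset \cA(S^+_N) \subset \cA(H^{s+}_N)$ generated by $\{p_{ij}\}_{j}$ is isomorphic to $\bC^N$. The free wreath product presentation then realizes $\cA(H^{s+}_N)$ as obtained from $\cA(S^+_N)$ by freely adjoining, for each $i$, one copy of $\cA(\bZ_s)$ whose generator commutes with $\cD_i$. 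Concretely, setting
\[
\cB_0 := \cA(S^+_N),\qquad \cB_i := \cA(\bZ_s)*\cB_{i-1}\big/[\cA(\bZ_s),\cD_i]\quad (1\le i\le N),
\]
one has $\cB_N \cong \cA(H^{s+}_N)$. At each step $\cA(\bZ_s)$ is a finite-dimensional C$^\ast$-algebra, $\cD_i \subset \cB_{i-1}$ is a finite-dimensional commutative C$^\ast$-subalgebra, and inductively $\cB_{i-1}$ is RFD---with the base $\cB_0 = \cA(S^+_N)$ supplied by \Cref{th.rf} (which requires $N\ge 4$). So \Cref{pr.push-rf} applies at every step and preserves the RFD property, yielding that $\cB_N = \cA(H^{s+}_N)$ is RFD. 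This is precisely the residual finiteness of $\widehat{H^{s+}_N}$, and hyperlinearity then comes for free via \Cref{rem.RFD}.

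The principal point demanding care is the identification of the iterated pushout $\cB_N$ with $\cA(H^{s+}_N)$, i.e.\ the free wreath product presentation; once this is in place, everything else reduces to a routine iteration of \Cref{pr.push-rf}. Notice that this approach bypasses the reflection-group topological generation result \Cref{th.refl-tg} entirely, consistent with the remark in the introduction that \Cref{th.refl-tg} is not strictly needed for \Cref{th.hns}.
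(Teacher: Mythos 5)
Your proposal is correct and follows essentially the same route as the paper: the $s=\infty$ case via \Cref{le.hn-top-gen} and \Cref{le.gen-rf}, and the finite-$s$ case via the free wreath product decomposition of \cite[Theorem 3.4(2)]{bv-reflection}, realized as an iterated pushout of the form $\cB\mapsto C(\bZ_s)*\cB/[C(\bZ_s),\cD_i]$ with $\cD_i\cong\bC^N$ the row subalgebras, handled inductively by \Cref{pr.push-rf} starting from \Cref{th.rf}. The point you flag as needing care (matching the iterated pushout with the wreath product presentation) is exactly the step the paper also relies on, and your observation that \Cref{th.refl-tg} is not needed matches the paper's own remark.
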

\begin{proof}
We fix $N$ throughout, and denote by $\cA_s$ and $\cA$ the Hopf $*$-algebras associated to $H_N^{s+}$ and $S^+_N$ respectively. 

{\bf (Case 1: $s<\infty$)} Recall from \cite[Theorem 3.4 (2)]{bv-reflection} that we have a free wreath product decomposition
\begin{equation}\label{eq:wr}
  \cA_s\cong C(\bZ_s)*_w \cA,
\end{equation}
where the right hand side is by definition the free $*$-algebra generated by $\cA$ and $n$ copies of $C(\bZ_s)$ with the constraint that the $i^{th}$ copy of $C(\bZ_s)$ commutes with the $i^{th}$ row of generators $u_{ij}$, $1\le j\le N$ of $\cA$. It follows that $\cA_s$ can be realized as a succession of extensions of the form
\begin{equation*}
  \cB\mapsto C(\bZ_s)*\cB/[C(\bZ_s),\cD]
\end{equation*}
for a commutative finite-dimensional $*$-subalgebra $\cD\subset \cB$. More concretely, the various algebras $\cD$ are the $N$-dimensional subalgebras of $\cA$ generated by a row $u_{ij}$, $1\le j\le N$ of generators. 

The residual finite-dimensionality of $\cA_s$ then follows inductively from \Cref{pr.push-rf}. 

{\bf (Case 2: $s=\infty$)} Given that $H^{\infty +}_N$ is topologically generated by all of the finite $H^{s+}_N$ embedded therein (by \Cref{le.hn-top-gen}), the conclusion follows from \Cref{le.gen-rf}. 
\end{proof}

While the proof given above for \Cref{th.hns} does not proceed inductively on $N$ or require topological generation, there is nevertheless an analogue of \Cref{th.top-gen} for quantum reflection groups that we record here. Just as we did for quantum permutation groups, we regard $H^{s+}_{N-1}$ as a quantum subgroup of $H^{s+}_{N}$ via the map
\begin{equation*}
  \cA(H^{s+}_N)\to \cA(H^{s+}_{N-1}) 
\end{equation*}
that sends the generators $u_{ij}$ to $\delta_{ij}$ if $N\in \{i,j\}$. 

\begin{theorem}\label{th.refl-tg}
  For $N\ge 6$ and $1\le s\le\infty$ the quantum reflection group $H_N^{s+}$ is topologically generated by its quantum subgroups $H^{s+}_{N-1}$ and $S_N$. 
\end{theorem}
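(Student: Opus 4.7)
The plan is to follow the template of \Cref{pr.coinv} and \Cref{th.top-gen}, treating the cases $s<\infty$ and $s=\infty$ separately. For $s=\infty$, \Cref{le.hn-top-gen} combined with transitivity of topological generation reduces everything to the finite-$s$ case: a functional $f:V^{\otimes k}\to\bC$ invariant under both $S_N$ and $H^{\infty+}_{N-1}$ is a fortiori invariant under each finite $H^{s+}_{N-1}<H^{\infty+}_{N-1}$, so granting the finite-$s$ result it is invariant under every $H^{s+}_N$, whence invariant under $H^{\infty+}_N$ by \Cref{le.hn-top-gen}.

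For finite $s$ I would invoke \Cref{le.alt-top-gen} and check that any functional $f: V^{\otimes w}\to\bC$ (where $V$ is the $N$-dimensional defining representation and $w\in F_s$ is a $\bZ_s$-colored word recording whether each tensor factor is $V$ or $V^*$) intertwining both $S_N$ and the distinguished copy of $H^{s+}_{N-1}<H^{s+}_N$ fixing $e_N$ is automatically an $H^{s+}_N$-morphism. The intertwiner spaces $\mathrm{hom}_{H^{s+}_N}(V^{\otimes w},\bC)$ are spanned by the $\bZ_s$-colored non-crossing partition maps from \cite{bv-reflection}, and these maps are linearly independent on any subspace of the ambient representation of dimension at least $4$. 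The $S_N$-invariance of $f$ upgrades invariance under the fixed copy of $H^{s+}_{N-1}$ into simultaneous invariance under each of the $N-1$ conjugate copies acting on the coordinate subspaces $V_i=\mathrm{span}(e_j:j\ne i)\subset V$.

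Writing $f|_{V_N^{\otimes w}}$ as a combination of such colored partition maps and subtracting the corresponding genuine $H^{s+}_N$-morphisms $V^{\otimes w}\to\bC$, we may assume $f|_{V_N^{\otimes w}}=0$, whereupon $S_N$-equivariance forces $f|_{V_i^{\otimes w}}=0$ for every $i$. The residual task is to kill $f$ on mixed summands $V_{i_1}\otimes\cdots\otimes V_{i_k}$, which after a further application of $S_N$ reduces to summands of the form $V_N^{\otimes a}\otimes W_N^{\otimes b}$ with $b\ge 1$. Since $W_N$ is the trivial $H^{s+}_{N-1}$-comodule, $f$ restricted to such a summand is again a combination of colored partition maps on $V_N^{\otimes a}$. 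Restricting further to $V_{1,N}^{\otimes a}\otimes W_N^{\otimes b}\subset V_1^{\otimes k}$ (on which $f$ already vanishes by the previous step) and using linear independence of colored partition maps on $V_{1,N}$, which has dimension $N-2\ge 4$ precisely when $N\ge 6$, forces $f$ to vanish on the mixed summand. An induction on the number of $W_N$-tensor legs (as in \Cref{pr.gen-diag}) handles the case $b\ge 2$.

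The main obstacle I anticipate is booking the $\bZ_s$-colored partition combinatorics correctly: unlike for $S^+_N$, the fundamental representation of $H^{s+}_N$ is not self-dual for $s\ge 3$, so one must work with tensor words mixing $V$ and $V^*$ and verify that restriction to coordinate subspaces respects the colored partition expansion. The $N\ge 6$ hypothesis enters exactly where $V_{1,N}$ must be at least $4$-dimensional, and there is no analogue of Banica's maximality theorem available to rescue the case $N=5$ for $s\ge 2$.
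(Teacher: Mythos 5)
Your proposal follows essentially the same route as the paper's proof: the $s=\infty$ case is reduced to finite $s$ via \Cref{le.hn-top-gen}, and for finite $s$ the argument of \Cref{pr.coinv} is rerun using the $\bZ_s$-colored non-crossing partition description of $H^{s+}_N$-invariants from \cite{bv-reflection}, with the hypothesis $N\ge 6$ entering exactly where you place it (so that $V_{1,N}$ has dimension $N-2\ge 4$ and the colored partition maps remain linearly independent). The paper states this only as ``a slight adaptation of \Cref{pr.coinv}''; your write-up supplies the same adaptation in more detail, so there is nothing genuinely different to compare.
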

\begin{proof}
We once more separate the finite and infinite-$s$ cases.
  
{\bf (1: finite $s$)} This follows from a slight adaptation of \Cref{pr.coinv}, modified as follows based on the representation theory of $H_N^{s+}$ as developed in \cite[Sections 5-7]{bv-reflection}. Instead of a single $n$-dimensional fundamental representation $V$ we have $s$ of them, labeled $V^{(i)}$ for $i\in \bZ_s$. As for morphisms,
\begin{equation*}
  f:V^{(i_1)}\otimes \cdots \otimes V^{(i_k)}\to \bC
\end{equation*}
is $H^{s+}_{N}$-invariant precisely when it is a linear combination of non-crossing partitions whose blocks are of the form
\begin{equation*}
  \{i_{a_1},\ \cdots ,\ i_{a_t}\},\ \sum_j i_{a_j} = 0\in \bZ_s. 
\end{equation*}
The argument in the proof of \Cref{pr.coinv} then goes through virtually unchanged.

{\bf (2: $s=\infty$)} This follows as in the proof of \Cref{th.hns}, from the claim for $s<\infty$ and the fact that $H^{s+}_N$ topologically generate $H^{\infty +}_{N}$ (by \Cref{le.hn-top-gen}). 
\end{proof}

\begin{remark}
  Note the bound on $N$: unlike \Cref{th.top-gen}, \Cref{th.refl-tg} does not apply to $N=5$ hence does not yield an inductive proof of residual finite-dimensionality; we do not know whether the result is still valid in that case, but believe it to be. 
\end{remark}

\subsection{Remarks on free entropy dimension} \label{section:FED} In this section we make some brief remarks on what is currently known about the {\it free entropy dimension} of the canonical generators of the finite von Neumann algebras $L^\infty(H_N^{s+})$.  We refer the reader to the survey \cite{Vo02} and the references therein for details on the various versions of free entropy dimension that exist.  

Fix $N \in \bN$ and  $s \in \bN \cup \{\infty\}$ and consider the self-adjoint family $X(N,s) = \{u_{ij}, u_{ij}^*\}_{1 \le i,j \le N}$ of generators of $\cA (H_N^{s+}) \subset L^\infty(H_N^{s+})$.  Associated to the sets $X(N,s)$ we have the (modified) {\it microstates free entropy  dimension} $\delta_0(X(N,s)) \in [0, n]$ and the {\it non-microstates free entropy dimension} $\delta^*(X(N,s))\in [0, n]$.  From \cite{BiCaGu} it is known that the general inquality $\delta_0(\cdot) \le \delta^*(\cdot)$ always holds,  and from \cite{CoSh} an upper bound for $\delta^*$ exists in terms of the $L^2$-Betti numbers of the discrete dual quantum groups $\widehat{H_N^{s+}}$: 
\[
\delta^*(X(N,s)) \le \beta_1^{(2)}(\widehat{H_N^{s+}}) - \beta_0^{(2)}(\widehat{H_N^{s+}}) +1.
\]
Here $\beta_k^{(2)}(\cdot)$ is the {\it $k$th $L^2$-Betti number} of a discerete quantum group.  See for example \cite{Ve12, KyRaVaVa, Bi13}.  Now, in \cite[Theorem 5.2]{KyRaVaVa}, we have the following computations

\[
\beta_1^{(2)}(\widehat{H_N^{s+}}) = 1-\frac 1s \quad \&\quad  \beta_0^{(2)}(\widehat{H_N^{s+}}) = 0 \qquad (N \ge 4).
\]
Finally, since $L^\infty(H_N^{s+})$ is Connes embeddable by Theorem \ref{th.hns}, it follows from \cite[Corollary 4.7]{Ju03} that $\delta_0(X(N,s)) \ge 1$ whenever $L^\infty(H_N^{s+})$ is diffuse.  The question of when exactly $L^\infty(H_N^{s+})$ is diffuse still seems to be open in complete generality.  However, it is known that $L^\infty(H_N^{s+})$ is a II$_1$-factor (and in particular diffuse) $N \ge 8$ \cite{Br13, lemeux2013fusion, Wahl15}.    Combing all the above inequalities together, we finally obtain 

\begin{corollary}\label{cor:FED}
For $N \ge 8$ and $s \in \bN \cup \{\infty\}$, we have 
\[
1 \le \delta_0(X(N,s)) \le \delta^*(X(N,s)) \le 2-\frac1s.
\]
In particular, the generators $X(N) = \{u_{ij}\}_{1 \le i,j \le N}$ of  $L^\infty(S_N^+)$ satisfy $\delta_0(X(N)) = \delta^*(X(N)) = 1$ for $N \ge 8$.
\end{corollary}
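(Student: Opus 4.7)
The statement is essentially an assembly of several results already present in the literature together with the residual finiteness theorem (\Cref{th.hns}) established in this paper, so the proof plan is to organize these ingredients into the two-sided inequality and then specialize.

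\textbf{Upper bound.} My plan is to chain together two known inequalities. First, the Biane--Capitaine--Guionnet comparison $\delta_0 \le \delta^*$ from \cite{BiCaGu} gives the middle inequality for free. Next, the Connes--Shlyakhtenko estimate from \cite{CoSh} bounds the non-microstates free entropy dimension of a generating set of a tracial $*$-algebra by
\[
\delta^*(X(N,s)) \le \beta_1^{(2)}(\widehat{H_N^{s+}}) - \beta_0^{(2)}(\widehat{H_N^{s+}}) + 1,
\]
provided the algebra satisfies the hypotheses needed for that bound (finitely generated, with the appropriate trace). Substituting the $L^2$-Betti number values $\beta_1^{(2)} = 1 - 1/s$ and $\beta_0^{(2)} = 0$ from \cite[Theorem 5.2]{KyRaVaVa} (valid for $N \ge 4$) gives $\delta^*(X(N,s)) \le 2 - 1/s$.

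\textbf{Lower bound.} For the inequality $\delta_0(X(N,s)) \ge 1$, I would invoke Jung's result \cite[Corollary 4.7]{Ju03}, which states that the microstates free entropy dimension of a generating family of a diffuse Connes-embeddable tracial von Neumann algebra is at least $1$. The two required hypotheses are supplied by results already in hand: Connes embeddability of $L^\infty(H_N^{s+})$ follows from \Cref{th.hns} (residual finiteness of $\widehat{H_N^{s+}}$ implies hyperlinearity via the Kirchberg factorization property, as recorded in \Cref{rem.RFD}); diffuseness follows from the fact that $L^\infty(H_N^{s+})$ is a II$_1$-factor for $N \ge 8$, as proved in \cite{Br13, lemeux2013fusion, Wahl15}. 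This completes the main inequality for $N \ge 8$ and arbitrary $s$.

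\textbf{Specialization to $S_N^+$.} For $s = 1$ the upper bound becomes $2 - 1 = 1$, so the lower and upper bounds collapse, forcing $\delta_0(X(N)) = \delta^*(X(N)) = 1$ for $N \ge 8$, which is the ``in particular'' clause.

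\textbf{Main obstacle.} There is no real obstacle: the proof is essentially a bookkeeping exercise that assembles \Cref{th.hns} of the present paper with the cited external results. The only point requiring a little care is matching up conventions, namely confirming that the generating set $X(N,s)$ (together with its adjoints) satisfies the hypotheses of the Connes--Shlyakhtenko bound and of Jung's inequality in the Kac-type compact quantum group setting; both have been spelled out in the quantum group literature already, so no new work is needed.
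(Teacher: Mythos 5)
Your proposal is correct and follows exactly the same route as the paper: the discussion preceding \Cref{cor:FED} assembles precisely these ingredients ($\delta_0\le\delta^*$ from \cite{BiCaGu}, the Connes--Shlyakhtenko bound via the $L^2$-Betti numbers computed in \cite{KyRaVaVa}, and Jung's lower bound using Connes embeddability from \Cref{th.hns} together with factoriality for $N\ge 8$). Nothing to add.
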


\begin{remark} \label{rem:strong-1bounded}
The situation for $S_N^+$ in Corollary \ref{cor:FED} is similar to what happens for the free orthogonal quantum groups $O_N^+$ \cite{bcv}.  However, for $O_N^+$, even more is known:  In \cite{BrVe18} it was shown that in fact $L^\infty(O_N^+)$ is a {\it strongly 1-bounded} von Neumann algebra for all $N \ge 3$.  The notion of strong 1-boundedness was introduced by Jung in \cite{Ju07} and entails that $\delta_0(X) \le 1$ for any self-adjoint generating set $X \subset L^\infty(O_N^+)$.  In particular, it follows that $L^\infty(O_N^+)$ is never isomorphic to an interpolated free group factor.  In this context, it is natural to ask whether similar results hold for quantum permuation groups: \\ \\
{\it Is $L^\infty(S_N^+)$ a strongly 1-bounded von Neumann algebra for all $N \ge 5$?}
\end{remark}

\section{Inner faithful matrix models for quantum permutation groups} \label{se.mod}

\Cref{th.rf} shows that the quantum group algebras $\cA(S_N^+) = \bC \widehat{S^+_N}$ have enough finite-dimensional $*$-representations, i.e. map faithfully into a product of matrix algebras. In the present section we prove that a specific, canonical collection of ``elementary'' representations is faithful in a certain sense. First, let us clarify the appropriate notion of faithfulness here.  See e.g. \cite[Definition 2.7]{bb-inner} for more details.

\begin{definition}\label{def.inner}
A $\ast$-homomorphism $\pi:\cA \to B$ from a Hopf $\ast$-algebra $\cA$ into a $\ast$-algebra $B$ is {\it inner faithful} if $\ker \pi$ contains no non-trivial Hopf $\ast$-ideals. Equivalently, for any factorization $\pi =
  \tilde \pi \circ \rho$ with $\rho : \cA \to \cA'$, a surjective morphism of Hopf $\ast$-algebras, we have in fact that $\rho$ is an isomorphism.  More generally, the {\it Hopf image} of $\pi:\cA \to B$ is the ``smallest'' quotient Hopf $\ast$-algeba $\cA'$ such that  $\pi$ factors through the quotient map $\rho : \cA \to \cA'$.  
 \end{definition}

Note that the Hopf image always exists and is unique (up to isomorphism) \cite{bb-inner}.  In this paper, we will only be concerned with the cases where our Hopf $\ast$-algebras are of the form $\cA(\bG)$ for a compact quantum group $\bG$.  In this case, the Hopf image of $\pi:\cA(\bG) \to B$ is $\cA(\bH)$, where $\bH < \bG$ is the ``smallest'' quantum subgroup  such that $\pi$ factors through $\rho : \cA(\bG) \to \cA(\bH)$. Moreover, $\pi$ is inner faithful if and only if $\bH = \bG$ (up to isomorphism).     

\subsection{Flat matrix models}
Following \cite[Definition 5.1]{bn-flat}, we denote by $X_N$ the space of $N\times N$ matrices $P = (P_{ij})_{ij}  \in M_N(M_N(\bC))$, whose entries $P_{ij} \in M_N(\bC)$, are rank-one projections with the property that for all $1\le i,j\le N$,
 \begin{equation*}
  \sum_j P_{ij} = 1 = \sum_i P_{ij}
\end{equation*}
In other words, $X_N \subset M_N(M_N(\bC))$ is the compact set of {\it bistochastic} $N\times N$ matrices of rank one projections in $M_N$. 

It is clear that each $P \in X_N$ gives rise to a $\ast$-homomorphism \begin{eqnarray*}
\pi_P:\cA(S_N^+) \to M_N(\bC); \qquad \pi_P(u_{ij}) = P_{ij} \qquad (1 \le i,j \le N),
\end{eqnarray*} 
and we call $\pi_P$ a {\it flat matrix model} for the quantum group $S_N^+$.  If we package all these flat matrix models $\pi_P$ into one single representation by allowing $P \in X_N$ to vary, we arrive at a construction that features prominently in \cite{bn-flat,bf-model}.

\begin{definition}\label{def.univ}
  The {\it universal flat matrix model} of $S^+_N$ is the morphism
  \begin{equation}\label{eq:pi}
    \pi: \cA(S^+_N)\to M_N(C(X_N))\cong C(X_N,M_N(\bC)); \qquad \pi(u_{ij}) = \{P \mapsto \pi_P(u_{ij}) = P_{ij}\}.
  \end{equation}
\end{definition}


One of the main conjectures in \cite{bn-flat} is the inner faithfulness of the universal flat matrix models.

\begin{conjecture}\cite[Conjecture 5.7]{bn-flat}\label{cj.if}
  The universal flat matrix model $\cA(S^+_N)\to M_N(C(X_N))$ is faithful for $N=4$ and inner faithful for $N\ge 5$.  
\end{conjecture}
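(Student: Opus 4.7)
Let $\bH < S^+_N$ be the quantum subgroup corresponding to the Hopf image of $\pi$; the conjecture is equivalent to proving $\bH = S^+_N$. The backbone of the argument is the topological generation machinery of \Cref{th.top-gen,cor.top-gen-bis,pr.gen-diag}: it suffices to exhibit the classical $S_N$ as a quantum subgroup of $\bH$, together with an $(M,N)$-large subgroup of $\bH$ for some $M \ge 5$, and then invoke \Cref{pr.gen-diag}.

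\textbf{Classical subgroup.} From every classical Latin square $L = (\sigma_i(j))$ on $[N]$ one builds a point $P(L) \in X_N$ by setting $P(L)_{ij}$ equal to the rank-one projection onto the basis vector $e_{\sigma_i(j)}$; the bistochasticity together with the rank-one conditions reduce exactly to the Latin square axioms. Each $\pi_{P(L)}$ has commutative image and factors through the abelianization $\cA(S^+_N) \twoheadrightarrow \cA(S_N)$, and as $L$ varies the joint Hopf image over this sub-family separates the points of $S_N$; this gives $S_N < \bH$.

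\textbf{Quantum subgroup via inflation.} For $N$ admitting a divisor $M \in \{5,\ldots,N\}$, one produces an $(M,N)$-large subgroup of $\bH$ from a ``diagonal inflation'' family of flat models: for $N = MK$ a quantum Latin square $Q \in X_M$ together with auxiliary quantum Latin square data of size $K$ assembles into a point $P(Q) \in X_N$ whose flat model factors through the diagonal fold $\cA(S^+_N) \to \cA(S^+_M)^{\ast K} \to \cA(S^+_M)$. Using inner faithfulness of the universal flat model on $X_M$ (inductively, or via the base case $M = 5$), the Hopf image of this sub-family is exactly the diagonal copy of $\cA(S^+_M)$, which is $(M,N)$-large in the sense of \Cref{def.large}, so \Cref{pr.gen-diag} yields $\bH = S^+_N$. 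The base case $N = 5$ uses Banica's maximality \cite[Theorem 7.10]{ban-uni} to reduce matters to ruling out $\bH = S_5$, immediate from the non-commutativity of a generic $\pi_P$ on $X_5$. For $N = 4$ the explicit presentation $\cA(S^+_4) \hookrightarrow C(SU_2, M_4)$ from \cite[Theorem 4.1]{bc-pauli} is itself implemented by a continuous flat model family, giving outright faithfulness.

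\textbf{The main obstacle.} The inflation construction cleanly handles $N \ge 10$ divisible by $5$; extending it to every $N \ge 10$ requires a more flexible construction that allows $M$ not to divide $N$, producing elements of $X_N$ out of quantum Latin square data on a size-$M$ block and its complement without forcing any entry $P_{ij}$ to be zero — the mandatory rank-one condition is the key constraint distinguishing this setup from an ordinary free-product/block-diagonal construction. The real sticking point — and the reason the gap $6 \le N \le 9$ is not covered — is that for $N - M$ small there is too little room in the complementary block to perform this mixing while preserving $(M,N)$-largeness of the resulting Hopf image. Bridging the gap would seemingly demand either a sharper topological generation criterion using $S^+_M$ with $M < 5$, or maximality-type theorems for $S_N < S^+_N$ analogous to Banica's in the relevant $N$-range.
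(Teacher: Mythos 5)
Your overall architecture matches the paper's: establish the base case $N=5$ from Banica's maximality together with the existence of a non-classical point of $X_5$ (\Cref{le.N=5,le.not-cls}), handle $N=4$ via the Pauli/$SU_2$ model, and for large $N$ show the Hopf image contains $S_N$ and an $(M,N)$-large subgroup so that \Cref{pr.gen-diag} applies. But as written your argument only proves the claim for $N\le 5$ and for $N\ge 10$ \emph{divisible by} $5$, and you explicitly leave the remaining cases ($N=11,12,13,14,16,\dots$) as an unresolved "main obstacle." That is a genuine gap relative to what is actually provable (and proved in \Cref{pr.cj-ind,cor.div5}): the statement holds for \emph{all} $N\ge 10$, and the divisibility hypothesis is an artifact of your diagonal-inflation construction, not of the problem.

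The missing idea is the Latin-square completion construction of \Cref{pr.cj-ind}, which works for every $N\ge 2M$. Fix rank-one projections $P_0,\dots,P_{N-1}$ in $M_N(\bC)$ summing to $1$ and build an $N\times N$ Latin square $\cL$ with entries among the $P_i$ whose upper-left $M\times M$ block is the cyclic pattern $\cL_{ij}=P_{(i-j)\bmod M}$; the remaining entries of the first $M$ rows are filled with $P_M,\dots,P_{N-1}$ so as to form a Latin rectangle (this is exactly where $N\ge 2M$ is used --- one needs an $M\times(N-M)$ Latin rectangle on $N-M$ symbols, requiring $M\le N-M$), and the rectangle is then completed to a Latin square. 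One then takes $Y\subset X_N$ to be the set of bistochastic matrices agreeing with $\cL$ outside the $M\times M$ corner. Every entry of every $\cM\in Y$ commutes with $P=\sum_{i<M}P_i$, and the off-corner entries in the first $M$ rows are orthogonal to $P$, so compressing by $P$ yields a factorization through $\cA(S^+_M)*\cA(S^+_{N-M})$ whose restriction to the corner is precisely the \emph{universal} flat model of $S^+_M$ on $X_M$ --- inner faithful by the inductive hypothesis, hence $(M,N)$-large. This is exactly the "more flexible construction \dots without forcing any entry $P_{ij}$ to be zero" that you correctly identify as necessary but do not supply: the off-corner entries are nonzero rank-one projections that merely \emph{vanish after compression} by $P$. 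With $M=5$ this covers all $N\ge 10$; your diagnosis of the $[6,9]$ gap is essentially right in spirit (the induction needs $N\ge 2M$ and the base maximality result is only known at $M=5$). Your classical-subgroup step is fine, though the assertion that Latin-square models jointly separate $S_N$ should be backed by the inner faithfulness of $\pi^{class}$ as in \cite[Proposition 5.3]{bf-model} rather than asserted.
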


\begin{remark}\label{re.cj-4}
  Note that for $N=4$ the conjecture is true by \cite[Theorem 4.1]{bc-pauli} (or \cite[Proposition 2.1]{bn-flat}). Indeed, the latter result produces {\it some} faithful representation of the form
  \begin{equation*}
    \cA(S^+_4)\to M_4(C(X))
  \end{equation*}
  for compact $X$ (in fact $X=SU_2$), which must factor as
  \begin{equation*}
    \cA(S^+_4)\to M_4(C(X_4))\to M_4(C(X))
  \end{equation*}
  by universality. 
\end{remark}

The main result of this section is that this conjecture is true for at least almost all $N$. To begin, we first need a few remarks and observations. Consider the closed subspace $X_{N}^{class}\subset X_{N}$ of matrices $P \in X_N$ for which the entries $P_{ij}$ pairwise commute.  Then, the classical permutation group $S_{N}$ also has a universal flat matrix model $\pi^{class} : \cA(S_{N})\to M_{N}(C(X_{N}^{class}))$. Moreover, if $q : \cA(S_{N}^{+})\to \cA(S_{N})$ is the canonical quotient map and $r : C(X_{N})\to C(X_{N}^{class})$ is the restriction map, then by construction $\pi^{class}\circ q= r\circ\pi$.

\begin{lemma}\label{le.not-cls}
  If $N\ge 4$ then the inclusion $X_N^{class}\subset X_N$ is proper.   
\end{lemma}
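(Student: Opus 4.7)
My plan is to produce, for each $N\ge 4$, an explicit element $P\in X_N\setminus X_N^{class}$ by starting with a classical magic unitary attached to a Latin square and then locally deforming it on a $2\times 2$ intercalate. First I would pick a Latin square $L\colon[N]\times[N]\to [N]$ of order $N$ containing an intercalate --- that is, indices $i_1\ne i_2$, $j_1\ne j_2$ and distinct values $a,b$ with $L(i_1,j_1)=L(i_2,j_2)=a$ and $L(i_1,j_2)=L(i_2,j_1)=b$. Such squares exist for every $N\ge 4$: for $N=4$ one can take the Cayley table of $\bZ_2\times \bZ_2$, and for $N\ge 5$ one fixes such an intercalate in a $2\times 2$ corner and extends it to a full Latin square via a standard Latin-rectangle completion (Hall's theorem).

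Using the standard basis $\{e_1,\ldots,e_N\}$ of $\bC^N$, the Latin square $L$ yields the classical flat model $P^\circ_{ij}:=e_{L(i,j)}e_{L(i,j)}^*\in X_N^{class}$. I would then deform $P^\circ$ only at the four intercalate positions: fix any $\theta\in(0,\pi/2)$, define the rotated orthonormal basis
\[
f_a:=\cos\theta\cdot e_a+\sin\theta\cdot e_b,\qquad f_b:=-\sin\theta\cdot e_a+\cos\theta\cdot e_b
\]
of $\mathrm{span}\{e_a,e_b\}$, and set $P_{i_1,j_1}=P_{i_2,j_2}:=f_af_a^*$ and $P_{i_1,j_2}=P_{i_2,j_1}:=f_bf_b^*$, while keeping $P_{ij}=P^\circ_{ij}$ at all other positions.

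Showing $P\in X_N$ is immediate: the entries are rank-one projections by construction, and the identity $f_af_a^*+f_bf_b^*=e_ae_a^*+e_be_b^*$ (both sides being the rank-two projection onto $\mathrm{span}\{e_a,e_b\}$) guarantees that the deformed intercalate contributes the same amount to every affected row and column sum as it did classically, so all row and column sums remain $I_N$. To conclude $P\notin X_N^{class}$ I would invoke the hypothesis $N\ge 4$ to pick some $i_3\notin\{i_1,i_2\}$ together with the unique $j_3$ for which $L(i_3,j_3)=a$, so that $P_{i_3,j_3}=e_ae_a^*$ is unaffected by the deformation; a one-line expansion of $f_af_a^*$ in the $\{e_a,e_b\}$ basis then gives
\[
[f_af_a^*,\,e_ae_a^*]=\sin\theta\cos\theta\,(e_be_a^*-e_ae_b^*)\neq 0
\]
for $\theta\in(0,\pi/2)$, exhibiting two non-commuting entries of $P$.

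The only mildly non-routine ingredient is the combinatorial input that a Latin square of order $N$ with an intercalate exists for every $N\ge 4$ (note that naive choices such as the cyclic Latin square of odd prime order contain no intercalates at all), and this is precisely where the hypothesis $N\ge 4$ enters. Everything else reduces to direct verification of the magic-unitary axioms and a single short commutator computation, so the hard part is really just the combinatorial setup.
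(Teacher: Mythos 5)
Your proof is correct and follows essentially the same route as the paper's: both start from a Latin square containing a $2\times 2$ intercalate (whose existence for $N\ge 4$ is secured via Latin-rectangle completion) and then replace the four corresponding rank-one projections by a rotated pair summing to the same rank-two projection, producing an entry that fails to commute with an untouched $e_ae_a^*$ elsewhere in the matrix. The only differences are cosmetic (a general angle $\theta$ instead of the paper's $\pi/4$ rotation via $e_1\pm e_2$, and a more explicit identification of the non-commuting partner entry).
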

\begin{proof}
To produce examples of $N\times N$ bistochastic matrices whose entries do not all commute we proceed as follows.

First, fix a basis $e_i$, $1\le i\le N$ for $\bC^N$ and let $L \in M_N(\bN)$ be a Latin square of size $N\times N$ (meaning that each row and column is a permutation of $\{1,\cdots,N\}$). Assume furthermore that the upper left hand $2\times 2$ corner of $L$ is
\begin{equation}\label{eq:22}
  \begin{pmatrix}
    1&2\\
    2&1
  \end{pmatrix}
\end{equation}
We can then form the bistochastic and commutative matrix whose $(i,j)$ entry is the projection on the one-dimensional span of $e_{L_{ij}}$, and then modify it slightly by changing its upper left hand $2\times 2$ corner to
\begin{equation*}
  \begin{pmatrix}
    P_u & P_v\\
    P_v & P_u
  \end{pmatrix}
\end{equation*}
where $u=e_1+e_2$ and $v=e_1-e_2$. The resulting bistochastic matrix contains, say, the projections $P_u$ and $P_{e_1}$, which do not commute.

It remains to argue that a Latin square $L$ as above exists if $N\ge 4$ (we have not used this hypothesis thus far). To see this observe that for $N\ge 4$ we can complete the square \Cref{eq:22} to a $2\times n$ {\it Latin rectangle}, in the sense that the two rows are permutations of $\{1,\cdots,N\}$ and no two entries in the same column coincide. We can then use the result that any Latin rectangle can be completed to a Latin square (e.g. \cite[Chapter 35, Lemma 1]{az}).
\end{proof}

This already yields one instance of  \Cref{cj.if}.

\begin{proposition}\label{le.N=5}
$S^+_5$ satisfies \Cref{cj.if}. 
\end{proposition}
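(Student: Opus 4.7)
The plan is to combine Banica's maximality theorem (the same ingredient that powered the $N=5$ case of \Cref{th.top-gen}) with \Cref{le.not-cls} to corner the Hopf image of $\pi$ into being all of $\cA(S_5^+)$.

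First I would let $\cA(\bH)$ denote the Hopf image of the universal flat matrix model $\pi : \cA(S_5^+)\to M_5(C(X_5))$, so that $\bH < S_5^+$ is a closed quantum subgroup through which $\pi$ factors. The goal is to show $\bH = S_5^+$, since by \Cref{def.inner} this is precisely inner faithfulness. Because the classical subgroup $S_5 < S_5^+$ admits no intermediate quantum subgroup by \cite[Theorem 7.10]{ban-uni}, the only two possibilities are $\bH = S_5^+$ (which is what we want) or $\bH < S_5$.

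The bulk of the argument is then to rule out $\bH < S_5$. In that case $\cA(\bH)$ would be a quotient of $\cA(S_5) = C(S_5)$ and hence commutative, so any $\ast$-homomorphism factoring through $\cA(\bH)$ would have commutative image. In particular the elements $\pi(u_{ij}) \in M_5(C(X_5))$ would pairwise commute. To contradict this, I would invoke \Cref{le.not-cls}: pick any $P \in X_5 \setminus X_5^{class}$ and compose $\pi$ with the evaluation $\mathrm{ev}_P : M_5(C(X_5))\to M_5(\bC)$. By the defining formula \Cref{eq:pi}, $\mathrm{ev}_P \circ \pi = \pi_P$, so $\mathrm{ev}_P(\pi(u_{ij})) = P_{ij}$, and these projections fail to pairwise commute by the choice of $P \notin X_5^{class}$. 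This forces $\pi(u_{ij})$ not to pairwise commute in $M_5(C(X_5))$, contradicting the hypothesis that $\bH < S_5$.

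Consequently $\bH = S_5^+$ and $\pi$ is inner faithful. The only real input beyond a trichotomy argument is Banica's maximality theorem; \Cref{le.not-cls} is what makes the classical branch untenable, and I do not anticipate any serious obstacle, the proof being essentially a two-line consequence of the two cited results.
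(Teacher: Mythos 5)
Your second step---ruling out a commutative Hopf image by evaluating at some $P\in X_5\setminus X_5^{class}$ supplied by \Cref{le.not-cls}---is exactly the paper's, but the dichotomy you set up before it does not follow from Banica's theorem, and this is a genuine gap. The maximality of $S_5<S_5^+$ says only that a quantum group $\bG$ with $S_5<\bG<S_5^+$ must equal $S_5$ or $S_5^+$; it says nothing about an arbitrary quantum subgroup $\bH<S_5^+$, which need not be comparable with $S_5$ at all. For instance $S_4^+<S_5^+$ is a proper quantum subgroup that is not a quantum subgroup of $S_5$ (every quantum subgroup of the classical group $S_5$ has commutative function algebra, while $\cA(S_4^+)$ does not); the paper itself invokes this fact in the proof of \Cref{th.top-gen}. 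So from what you have established, the Hopf image $\bH$ could a priori be a non-classical proper quantum subgroup incomparable with $S_5$, and neither branch of your case analysis applies.

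The missing ingredient is the inner faithfulness of the \emph{classical} universal flat model $\pi^{class}:\cA(S_5)\to M_5(C(X_5^{class}))$ (\cite[Proposition 5.3]{bf-model}), which is what forces $S_5<\bH$ and thereby legitimizes the appeal to maximality. Concretely, with $q:\cA(S_5^+)\to\cA(S_5)$ the abelianization and $r:C(X_5)\to C(X_5^{class})$ the restriction, one has $\pi^{class}\circ q=r\circ\pi$; hence any Hopf $*$-ideal $I\subseteq\ker\pi$ satisfies $\pi^{class}(q(I))=0$, so $q(I)=0$ by inner faithfulness of $\pi^{class}$, i.e.\ $I\subseteq\ker q$. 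Only now does \cite[Theorem 7.10]{ban-uni} yield $I=(0)$ or $I=\ker q$, and the latter is excluded by your (correct) non-commutativity argument via \Cref{le.not-cls}. With this one step inserted, your proof coincides with the paper's.
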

\begin{proof}
Let $\pi, \pi^{class}, r, q$ be as above and assume $N=5$.   Let $I$ be a Hopf $*$-ideal contained in $\ker(\pi)$ and observe that $\pi^{class}(q(I)) = r\circ\pi(I) = 0$. Since $\pi^{class}$ is inner faithful (see e.g., \cite[Proposition 5.3]{bf-model}), this forces $q(I) = (0)$, i.e.~$I\subset \ker(q)$. But because there is no intermediate quantum group between $S_5$ and $S_5^+$ by \cite[Theorem 7.10]{ban-uni}, it follows that either $I = (0)$ or $I = \ker(q)$. In the second case we get
\begin{equation*}
\pi = \pi^{class}\circ q = r\circ\pi.
\end{equation*}
In particular, this implies that any family $(P_{ij})_{1\leqslant i, j\leqslant N}$ of rank-one projections which are pairwise orthogonal on rows and columns commutes, i.e.~$X_{N} = X_{N}^{class}$. This equality, however, is invalid for $N\geqslant 4$ by \Cref{le.not-cls}.
\end{proof}

\begin{remark}\label{re.max-if}
  In fact, the proof of \Cref{le.N=5} shows that \Cref{cj.if} is satisfied whenever the inclusion $S_N < S^+_N$ is maximal. 
\end{remark}

We are now ready for the main technical result of this section.

\begin{proposition}\label{pr.cj-ind}
Let $M \ge 5$ and $N\ge 2M$. If $S^+_M$ satisfies \Cref{cj.if} then so does $S^+_N$.   
\end{proposition}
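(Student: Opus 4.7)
The plan is to show that the Hopf image $\bH\le S^+_N$ of the universal flat matrix model $\pi:\cA(S^+_N)\to M_N(C(X_N))$ equals $S^+_N$ by invoking the topological generation result \Cref{pr.gen-diag}. Since that proposition gives $S^+_N=\langle \bG, S_N\rangle$ for any $(M,N)$-large $\bG\le S^+_N$, it suffices to establish (a) $S_N\le \bH$ and (b) $\bG\le \bH$ for some $(M,N)$-large $\bG$.

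For (a), I restrict $\pi$ to the closed subset $X_N^{class}\subset X_N$ of pairwise commuting magic matrices. The restricted model factors through the abelianization $\cA(S^+_N)\twoheadrightarrow \cA(S_N)$ and realizes the universal flat matrix model of the classical group $S_N$, which is inner faithful by \cite[Proposition 5.3]{bf-model}. Restricting $\pi$ only enlarges kernels and the largest Hopf ideals contained in them, so the Hopf image of $\pi|_{X_N^{class}}$ is a quantum subgroup of $\bH$; equality of this restricted Hopf image with $S_N$ then yields $S_N\le \bH$.

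For (b), the hypothesis $N-M\ge M$ produces an $(M,N)$-large Hopf surjection
\[
q:\cA(S^+_N)\twoheadrightarrow \cA(S^+_M)*\cA(S^+_{N-M})\xrightarrow{\mathrm{id}*\phi_{\mathrm{ul}}}\cA(S^+_M)*\cA(S^+_M)\xrightarrow{\mathrm{fold}}\cA(S^+_M),
\]
where $\phi_{\mathrm{ul}}:\cA(S^+_{N-M})\twoheadrightarrow \cA(S^+_M)$ is the upper-left corner surjection; an immediate check against \Cref{def.large} confirms that the corresponding $\bG\le S^+_N$ is $(M,N)$-large. To show $\bG\le \bH$ I would build a family $Y\subset X_N$ parametrized by $X_M$ that mirrors $q$: fix a decomposition $\bC^N=V_1\oplus V_2\oplus V_3$ with $\dim V_1=\dim V_2=M$ and $\dim V_3=N-2M\ge 0$, and for each $P'\in X_M$ place a copy of $P'$ in the $V_1$-block, a second copy of $P'$ (with indices shifted by $M$) in the $V_2$-block, and a fixed classical permutation in the $V_3$-block; the off-block entries are completed via Latin-rectangle data, which is possible precisely because $N\ge 2M$. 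The doubled placement of $P'$ in $\tilde P(P')$ is designed to reflect the two-fold diagonal structure of $q$, and then by the assumed inner-faithfulness of $\pi^M:\cA(S^+_M)\to M_M(C(X_M))$ one argues that any Hopf ideal contained in $\ker\pi|_Y$ must project under $q$ into $\ker\pi^M$ and hence vanish---yielding $\bG\le \bH_Y\le \bH$.

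The main obstacle is the verification in (b): one must explicitly produce $\tilde P(P')\in X_N$ satisfying the bistochastic constraints (the Latin-rectangle filling is where $N\ge 2M$ is genuinely used) and then translate the doubled block structure into a bona fide Hopf-ideal comparison, invoking inner faithfulness of $\pi^M$ to preclude parasitic Hopf ideals in $\ker\pi|_Y$. Once (a) and (b) are in hand, \Cref{pr.gen-diag} delivers $\bH\supseteq\langle S_N,\bG\rangle=S^+_N$, so $\bH=S^+_N$ and the universal flat matrix model of $S^+_N$ is inner faithful, proving the proposition.
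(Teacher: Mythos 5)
Your overall strategy is the same as the paper's: reduce to \Cref{pr.gen-diag} by showing that the Hopf image of $\pi$ contains $S_N$ (via restriction to $X_N^{class}$ and inner faithfulness of the classical flat model) together with some $(M,N)$-large quantum subgroup, the latter being produced from a subfamily $Y\subset X_N$ parametrized by $X_M$. Part (a) is fine. The gap is in the construction of $Y$ in part (b): the family $\tilde P(P')$ you describe does not exist when $2M\le N<3M$ --- in particular not in the case $N=2M$, which is exactly the case needed to launch \Cref{cor.div5} from $M=5$. Indeed, consider an entry $R_{ij}$ of $\tilde P(P')$ with $i\le M$ and $M<j\le 2M$. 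Since the $V_1$-diagonal block of row $i$ already sums to $P_{V_1}$, the remaining entries of that row sum to $1-P_{V_1}$ and are therefore pairwise orthogonal rank-one projections orthogonal to $V_1$; symmetrically, since the $V_2$-diagonal block of column $j$ sums to $P_{V_2}$, we get $R_{ij}\perp V_2$. Hence the $M$ entries $R_{ij}$, $M<j\le 2M$, of row $i$ are pairwise orthogonal rank-one projections supported in $V_3$, forcing $\dim V_3=N-2M\ge M$. So your claim that the Latin-rectangle completion ``is possible precisely because $N\ge 2M$'' is where the argument breaks; doubling the varying block $P'$ is what creates the obstruction.

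A second, related problem: even where $Y$ exists, $\pi|_Y$ does not factor through your surjection $q$ (or even through $\cA(S^+_M)*\cA(S^+_{N-M})$), because the off-block entries of $\tilde P(P')$ are nonzero rank-one projections rather than $0$. To obtain such a factorization one must compress by a projection that annihilates the off-block entries, and one must check that all entries commute with that projection so the compression is a $*$-homomorphism; your sketch omits this, and it is not automatic for your $Y$. The paper sidesteps both issues by using only \emph{one} varying $M\times M$ corner (the rest of the matrix being a fixed Latin square of basis projections), compressing by $P=\sum_{i<M}P_i$ to land in $M_M(C(Y))\cong M_M(C(X_M))$, and then observing that $(M,N)$-largeness only requires $\cA(S^+_M)\to\cA(\bG)$ to be injective for the Hopf image $\bG$ of the compressed model --- there is no need to realize the specific diagonal/fold quotient $\bG\cong S^+_M$ that your construction aims for. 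If you replace your doubled placement by a single varying corner plus a fixed Latin square and add the compression step, the argument closes.
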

\begin{proof}
Let $\cA=\cA(S^+_N)$. We have to argue that under the hypothesis, the Hopf image $\cA\to \cA_{\pi}$ of \Cref{eq:pi} is all of $\cA$. Since the quantum group attached to $\cA_{\pi}$ clearly contains $S_N$, \Cref{pr.gen-diag} reduces the problem to showing that it also contains a quantum subgroup $\bG<S^+_N$ that is $(M,N)$-large in the sense of \Cref{def.large}.

For this, fix a collection of rank-one projections $P_i$, $0\le i\le N-1$ in $M_N$ summing up to $1$. We form an $N\times N$ Latin square $(\cL_{ij})_{0\le i,j\le N-1}$ all of whose entries are the projections $P_i$ as follows:
\begin{itemize}
\item if $i,j\le M-1$ we set $\cL_{ij}=P_{(i-j)\;\mathrm{mod}\;M}$;
\item we fill the rest of the first $M$ rows with $P_i$'s arbitrarily so as to retain the Latin rectangle property (this is possible because $2M\le N$); 
\item complete the above Latin rectangle to a Latin square, once more using \cite[Chapter 35, Lemma 1]{az}.    
\end{itemize}
Having constructed $\cL$ as above, consider the subspace $Y\subset X_N$ consisting of those $N\times N$ bistochastic matrices $\cM$ of rank-one projections that are identical to $\cL$ outside of the upper left hand $M\times M$ corner.

Setting
\begin{equation*}
  P=\sum_{i=0}^{M-1} P_i,
\end{equation*}
all operators appearing as entries of matrices $\cM\in Y$ commute with $P$. Restricting these operators to the range of $P$ (which is in turn isomorphic to $\bC^M$), we obtain the upper right hand arrow in the composition
\begin{equation*}
  \begin{tikzpicture}[baseline=(current  bounding  box.center),anchor=base,cross line/.style={preaction={draw=white,-,line width=6pt}}]
    \path (0,0) node (1) {$\cA(S^+_N)$} +(3,.5) node (2) {$M_N(C(X_N))$} +(6,.5) node (3) {$M_N(C(Y))$} +(9,0) node (4) {$M_M(C(Y))$} +(4.5,-.5) node (d) {$\cA(S^+_M)*\cA(S^+_{N-M})$}; 
    \draw[->] (1) to[bend left=6]  node[pos=.5,auto]{$\scriptstyle \pi$} (2);
    \draw[->] (2) to[bend left=6] (3);
    \draw[->] (3) to[bend left=6] (4);
    \draw[->] (1) to[bend right=6] (d);
    \draw[->] (d) to[bend right=6] node[pos=.5,auto,swap] {$\scriptstyle \eta$} (4);    
  \end{tikzpicture}  
\end{equation*}
where the lower factorization occurs because by construction the off-block-diagonal entries $\cL_{ij}$ with precisely one of $i,j$ in $\{0,\cdots,M-1\}$ are projections orthogonal to $P$ and hence vanish on $\mathrm{Im}\;P$. Our goal is now to show that the Hopf image of $\eta$ in the above diagram contains an $(M,N)$-large quantum subgroup
\begin{equation*}
  \bG < S^+_M * S^+_T < S^+_N,\ T:=N-M. 
\end{equation*}
Equivalently, this means proving that the composition
\begin{equation}\label{eq:eta-comp}
  \begin{tikzpicture}[baseline=(current  bounding  box.center),anchor=base,cross line/.style={preaction={draw=white,-,line width=6pt}}]
    \path (0,0) node (1) {$\cA(S^+_M)$} +(3,0) node (2) {$\cA(S^+_M)*\cA(S^+_T)$} +(6,0) node (3) {$M_M(C(Y))$}; 
    \draw[->] (1) -- (2);
    \draw[->] (2) --(3)node[pos=.5,auto]{$\scriptstyle \eta$};
  \end{tikzpicture}
\end{equation}
is inner faithful. To verify this, recall that by construction the upper left hand $M\times M$ corners of matrices in $Y$ are {\it arbitrary} bistochastic matrices in
\begin{equation*}
  M_M\cong \mathrm{End}(\mathrm{Im}\;P). 
\end{equation*}
Since the other entries of matrices in $Y$ are identical to those of the fixed Latin square $\cL$, we have an isomorphism $M_M(C(Y))\cong M_M(C(X_M))$. The isomorphism described here renders \Cref{eq:eta-comp} identical to the canonical universal flat representation of $\cA(S^+_M)$, which is inner faithful by hypothesis.
\end{proof}

As a consequence, we can prove \Cref{cj.if} for almost all $N$.

\begin{corollary}\label{cor.div5}
All $S^+_N$ with $N \le 5$ and $N\ge 10$ satisfy \Cref{cj.if}. 
\end{corollary}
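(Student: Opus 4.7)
The plan is essentially an exercise in combining the preceding results, with no new ideas needed; the heavy lifting was done in \Cref{le.N=5,pr.cj-ind}. I would split the range into the small cases $N\le 5$ and the large cases $N\ge 10$, handling each by direct appeal.

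For $N\le 3$ one has $S^+_N\cong S_N$ (a classical, well-known collapse of Wang's construction), so the universal flat model coincides with its classical counterpart $\pi^{class}:\cA(S_N)\to M_N(C(X_N^{class}))$, which is already known to be inner faithful (this is used without further comment in the proof of \Cref{le.N=5}, and is the content of, e.g., \cite[Proposition 5.3]{bf-model}). The remaining small cases $N=4$ and $N=5$ have already been settled: the former by \Cref{re.cj-4} (which in fact upgrades inner faithfulness to outright faithfulness, via the $SU_2$-model of \cite{bc-pauli}) and the latter by \Cref{le.N=5}.

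For the range $N\ge 10$, the strategy is to invoke the inductive principle of \Cref{pr.cj-ind} with $M=5$. The hypothesis ``$S^+_M$ satisfies \Cref{cj.if}'' is exactly \Cref{le.N=5}, and the numerical hypothesis $N\ge 2M=10$ is exactly our assumption. The conclusion of \Cref{pr.cj-ind} then gives inner faithfulness of $\pi:\cA(S^+_N)\to M_N(C(X_N))$ for every such $N$.

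There is really no obstacle here beyond bookkeeping; the reason the range $6\le N\le 9$ is not covered is structural. The inductive step of \Cref{pr.cj-ind} requires a base quantum subgroup $S^+_M$ with $M\ge 5$ (so that $S_N$ together with the $(M,N)$-large embedding topologically generates $S^+_N$ via \Cref{pr.gen-diag}, which in turn relied on $M\ge 5$ in \Cref{cor.top-gen-bis}) and with $2M\le N$ (so that the Latin-rectangle completion fitting an $M\times M$ block into the upper-left corner is possible). The smallest admissible $M$ is $5$, forcing $N\ge 10$. Closing the gap $6\le N\le 9$ would require either a version of \Cref{pr.cj-ind} valid for $M=4$, or a direct verification (for instance by establishing maximality of $S_N<S^+_N$ for some $N$ in this range, which via \Cref{re.max-if} would settle the conjecture for that $N$); neither of these is within reach with the current tools, and I would not attempt them here.
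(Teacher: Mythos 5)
Your proof is correct and follows essentially the same route as the paper: the cases $N\le 4$ are already covered by the preceding discussion (\Cref{re.cj-4} and the classical collapse for $N\le 3$), $N=5$ is \Cref{le.N=5}, and $N\ge 10$ follows from \Cref{pr.cj-ind} applied with $M=5$. The additional commentary on why the gap $6\le N\le 9$ remains is accurate but not part of the proof itself.
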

\begin{proof}
As explained above, we already know the conjecture to hold in the cases $N \le 4$.  For the remaining cases , it is enough to prove that $S_{5}^{+}$ satisfies \Cref{cj.if} in view of \Cref{pr.cj-ind}, and this is taken care of by \Cref{le.N=5}.
\end{proof}

\subsection{Inner unitary Hopf $\ast$-algebras}\label{subse.inner-unitary} 

\Cref{cor.div5} shows that the universal flat matrix model is inner faithful for most quantum permutation groups.  In this final section we show that we can do even better: it turns out that for the same values of the parameter $N$ a {\it single} finite-dimensional representation suffices to achieve inner faithfulness. We first recall the relevant concept from \cite[Definition 5.1]{ab}. 

\begin{definition}\label{def.inner-unitary}
A Hopf $*$-algebra $\cA$ is {\it inner unitary} if it has an inner faithful $*$-homomorphism into a finite-dimensional C$^*$-algebra.  
\end{definition}

The main result of this subsection is the following improvement on \Cref{cor.div5}.  

\begin{theorem}\label{th.inner-unitary}
  The Hopf $\ast$-algebra $\cA=\cA(S^+_N)$ is inner unitary for all $N$ outside the range $[6,9]$.
\end{theorem}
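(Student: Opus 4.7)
The plan is to split the range $N\in\{1,\ldots,5\}\cup\{10,11,\ldots\}$ into three regimes and combine the topological generation machinery of \Cref{se.rf} with a finite-point reduction of the universal flat matrix model of \Cref{se.mod}.

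For $N\le 3$, the algebra $\cA(S_N^+)$ is already finite-dimensional (as $S_N^+=S_N$ is classical and finite), so the identity map is trivially an inner faithful finite-dimensional model.

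For $N=5$, I exploit Banica's maximality theorem \cite[Theorem 7.10]{ban-uni} for the inclusion $S_5<S_5^+$. Choose $P_1\in X_5\setminus X_5^{class}$ (available by \Cref{le.not-cls}) and a classical $P_2\in X_5^{class}$ whose Hopf image $\bH_{P_2}$ is all of $S_5$; such a $P_2$ exists since one can pick the underlying Latin square so that its column permutations generate $S_5$ (a direct verification for an explicit non-cyclic Latin square on five letters). Form
\begin{equation*}
  \tau_5 := \pi_{P_1}\oplus\pi_{P_2}\colon \cA(S_5^+)\longrightarrow M_5\oplus M_5,
\end{equation*}
whose Hopf image $\bH$ is the quantum subgroup $\langle \bH_{P_1},S_5\rangle$ of $S_5^+$. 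Since $P_1$ has non-commuting entries, $\pi_{P_1}$ cannot factor through the commutative $\cA(S_5)$, so $\bH_{P_1}\not\subseteq S_5$, forcing $\bH\supsetneq S_5$. By Banica's maximality, $\bH=S_5^+$ and $\tau_5$ is inner faithful into a finite-dimensional C*-algebra.

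For $N\ge 10$, I propagate the $N=5$ base case inductively via the $(5,N)$-large machinery underlying \Cref{pr.cj-ind,pr.gen-diag}. Fix a Latin-square bistochastic matrix $\cL\in X_N$ as in the proof of \Cref{pr.cj-ind} and construct $\cM_1,\cM_2\in X_N$ by placing $P_1,P_2$ in the upper-left $5\times 5$ block of $\cL$. With $q_N\colon\cA(S_N^+)\to\cA(S_N)=C(S_N)$ the classical quotient, set
\begin{equation*}
  \tau_N := \pi_{\cM_1}\oplus\pi_{\cM_2}\oplus q_N\colon \cA(S_N^+)\longrightarrow M_N\oplus M_N\oplus C(S_N),
\end{equation*}
whose target is finite-dimensional. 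Let $\bG$ be the Hopf image of $\pi_{\cM_1}\oplus\pi_{\cM_2}$. Restricting these two flat models to the range of the rank-$5$ projection $P$ of \Cref{pr.cj-ind} reproduces exactly $\tau_5\colon\cA(S_5^+)\to M_5^{\oplus 2}$, which is inner faithful by the previous paragraph. Reproducing the argument in the proof of \Cref{pr.cj-ind}, this forces the lower-path map $\cA(S_5^+)\to\cA(\bG)$ of \Cref{def.large} to be injective, so $\bG$ is $(5,N)$-large. Then \Cref{pr.gen-diag} yields $\langle\bG,S_N\rangle=S_N^+$, which coincides with the Hopf image of $\tau_N$; hence $\tau_N$ is inner faithful.

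The case $N=4$ is the main obstacle, because the inclusion $S_4<S_4^+$ is not maximal. The approach I would take is to use the faithful Pauli embedding $\cA(S_4^+)\hookrightarrow M_4(C(SU_2))$ from \cite[Theorem 4.1]{bc-pauli} (as in \Cref{re.cj-4}). Faithfulness forces $\bigvee_{g\in SU_2}\bH_g=S_4^+$ where $\bH_g<S_4^+$ is the Hopf image of the pointwise evaluation $\pi_g\colon\cA(S_4^+)\to M_4$, and some finiteness property of the quantum subgroup lattice of $S_4^+$ (e.g.\ via the classification of its easy quantum subgroups) is needed to conclude that finitely many $g_1,\ldots,g_k\in SU_2$ suffice, so that $\bigoplus_i\pi_{g_i}$ is inner faithful into the finite-dimensional algebra $M_4^{\oplus k}$. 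Unlike the $N=5$ and $N\ge 10$ cases, which follow cleanly from Banica's maximality and the $(5,N)$-large construction, the $N=4$ step hinges on auxiliary structural facts about $S_4^+$ that are not packaged in the earlier sections of the paper.
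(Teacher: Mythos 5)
Your handling of $N\le 3$, $N=5$ and $N\ge 10$ is essentially the paper's own argument: for $N=5$ the paper takes exactly your $\pi_x\oplus\pi_y$ with $x\in X_5\setminus X_5^{class}$ (from \Cref{le.not-cls}) and $y$ a flat inner faithful model of $C(S_5)$, and concludes by maximality of $S_5<S_5^+$ \cite{ban-uni}; for $N\ge 10$ it runs the same $(5,N)$-large Latin-square propagation through \Cref{pr.cj-ind} and then invokes topological generation by $S_N$ together with the resulting large subgroup. Your only deviations there are cosmetic: you adjoin the full classical quotient $q_N:\cA(S_N^+)\to C(S_N)$ instead of a single inner faithful flat model $\pi^{class}_w$ (harmless, since $C(S_N)$ is finite-dimensional), and the paper closes the $N\ge10$ case with \Cref{cor.top-gen-bis} rather than \Cref{pr.gen-diag}, which amounts to the same thing.

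The genuine gap is at $N=4$. Your premise that ``the inclusion $S_4<S_4^+$ is not maximal'' is incorrect: the classification of the quantum subgroups of $S_4^+\cong SO_{-1}(3)$ (Banica--Bichon) shows there is no intermediate quantum group $S_4<\bG<S_4^+$, and the paper uses precisely this to treat $N=4$ and $N=5$ uniformly --- the Hopf image of $\pi_x\oplus\pi_y$ is a non-commutative quotient of $\cA(S_4^+)$ whose dual quantum group contains $S_4$, hence equals $S_4^+$. Your proposed fallback via the Pauli embedding $\cA(S_4^+)\hookrightarrow M_4(C(SU_2))$ does not close the case as written: faithfulness of that embedding only tells you that the Hopf images $\bH_g$ of the evaluations at \emph{all} $g\in SU_2$ jointly generate $S_4^+$, and the passage to a finite subfamily $g_1,\dots,g_k$ is exactly the nontrivial content of inner unitarity; the ``finiteness property of the quantum subgroup lattice'' you would need is not supplied (and in fact what makes the argument work is again the absence of intermediate subgroups above $S_4$, not a general chain condition). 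So the correct fix is simply to delete the detour and apply your $N=5$ argument verbatim to $N=4$.
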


\begin{proof}
  We first tackle the smaller-$N$ cases.

  {\bf (Case 1: $N\le 3$)} $S^+_N$ is classical, and hence the conclusion follows from \cite[Proposition 5.5]{bb-inner}.  
  
  {\bf (Case 2: $N=4,5$)} Let $x\in X_N$ be any of the bistochastic matrices whose entries generate a non-commutative subalgebra of $M_N(\bC)$ and let $y\in X_{N}^{class}\subset X_N$ be such that the corresponding flat representation $\pi^{class}_y$ is inner faithful on $\cA(S_N)$ (such $x$ and $y$ exist by \Cref{le.not-cls}) and \cite[Proposition 5.3]{bf-model}, respectively). The Hopf image of the  representation
  \begin{equation*}
    \pi_x\oplus\pi_y:\cA(S^+_N)\to M_N(\bC)\oplus M_N(\bC)
  \end{equation*}
is then a non-commutative quotient Hopf $*$-algebra of $\cA$ containing $S_N$. Since for $N=4,5$ we know that there are no intermediate quantum groups
  \begin{equation*}
    S_N<\bG<S^+_N,
  \end{equation*}
the Hopf image of $\pi_x\oplus\pi_y$ is all of $\cA$, as desired.

{\bf (Case 3: $N\ge 10$)} The proof of \Cref{pr.cj-ind} in fact shows that if $5\le M\le \frac N2$ and $\cA(S^+_M)$ admits a finite inner faithful family $\{\pi_{z_1}, \ldots, \pi_{z_n}\}$ ($z_i \in X_M$) of flat $M$-dimensional representations, then $\cA$ admits a finite family $\{\pi_{z_1'}, \ldots, \pi_{z_n'}\}$ ($z_i' \in X_N$) of flat representations whose joint Hopf image surjects onto $\cA(S^+_M)*\cA(S^+_{N-T})$. Since for $M=5$ we do have such a family $\{\pi_x,\pi_y\}$ by the previous step of the current proof, we have such an $x', y'\in X_N$.


Further choosing any inner faithful flat representation $\pi^{class}_w:\cA(S_N)\to M_N(\bC)$  ($w \in X_N^{class}$), the resulting direct sum representation \[\pi_{x'} \oplus \pi_{y'} \oplus \pi_{w}: \cA(S^+_N)\to M_N(\bC)^{\oplus 3}\] is inner faithful. Indeed, the quantum subgroup of $S^+_N$ dual to its Hopf image contains both $S_N$ and $S^+_M * S^+_{N-M}$ and hence coincides with $S^+_N$ by \Cref{cor.top-gen-bis}.
\end{proof}


\bibliography{RFD_QG}{}
\bibliographystyle{plain}
\addcontentsline{toc}{section}{References}

\Addresses

\end{document}